\pdfoutput=1

\newif\ifCmp\Cmptrue
\newcount\FmtChoice
\FmtChoice2
\ifcase\FmtChoice
\documentclass{interact}
\or
\documentclass[a4paper,ngerman,american,reqno]{amsart}
\usepackage{a4wide}
\or
\documentclass[english]{jnsao}
\fi

\usepackage{xcolor}
\usepackage{xspace}
\usepackage{mathtools}
\usepackage{microtype}
\usepackage{bm}
\usepackage{empheq}
\usepackage{cancel}
\usepackage{tikz}
\usetikzlibrary{matrix}
\usetikzlibrary{arrows.meta}
\usepackage{ao-math-std}
\usepackage{ao-math-symbols}
\usepackage{centernot}
\usepackage{rotating}
\usepackage[nameinlink,capitalize]{cleveref}

\usepackage[numbers,sort&compress]{natbib}
\bibpunct[, ]{[}{]}{,}{n}{,}{,}

\ifnum\FmtChoice<2
\newtheorem{theorem}{Theorem}
\newtheorem{lemma}[theorem]{Lemma}

\theoremstyle{definition}
\newtheorem{definition}[theorem]{Definition}

\theoremstyle{remark}
\newtheorem{remark}[theorem]{Remark}
\fi

\DeclareMathOperator\sign{sign}

\DeclareMathOperator\diag{diag}

\ifcase0
\newcommand\minst[2][]{\min_{#1} \ #2 \qstq}
\newcommand\sminst[2][]{\smash[b]{\min_{#1}} \ #2 \qstq}
\or
\newcommand\minst[2][]{\min_{#1} \quad &#2 \\ \stq}
\fi


\newcommand\tcones{tangent cones\xspace}
\newcommand\tcone{tangent cone\xspace}
\newcommand\TCone{Tangent Cone\xspace}

\newcommand{\compl}{\ensuremath\perp}
\newcommand{\R}{\mathbb R}

\newcommand\setE{\mathcal E}
\newcommand\setI{\mathcal I}
\newcommand\setZ{\mathcal Z}
\newcommand\setA{\mathcal A}

\newcommand\setN{\mathcal N}
\newcommand\setF{\mathcal F}
\newcommand\setT{\mathcal T}

\newcommand\setP{\mathcal P}
\newcommand\setPt{\mathcal P^t}
\newcommand\setPw{\mathcal P^w}
\newcommand\setPtw{\mathcal P^{t,w}}

\newcommand\setU{\mathcal U}
\newcommand\setUt{\setU^t}
\newcommand\setUw{\setU^w}

\newcommand\setV{\mathcal V}
\newcommand\setVt{\setV^t}
\newcommand\setVw{\setV^w}

\newcommand\setD{\mathcal D}
\newcommand\setDt{\setD^t}
\newcommand\setDw{\setD^w}

\newcommand{\Cspace}{C}
\newcommand{\Cd}{\Cspace^d}
\newcommand{\Cabs}{\Cd_{\text{abs}}}

\newcommand{\Dom}{D}
\newcommand\Domx{\Dom^x}
\newcommand\Domz{\Dom^{\abs{z}}}
\newcommand\Domt{\Dom^t}
\newcommand{\Domzt}{\Dom^{\abs{\zt}}}
\newcommand\Domxz{\Dom^{x, \abs{z}}}
\newcommand\Domtzt{\Dom^{t, \abs{\zt}}}

\newcommand\Fabs{\setF\tsb{abs}}
\newcommand\Tabs{\setT\tsb{abs}}
\newcommand\Tlinabs{\Tabs\tsp{lin}}
\newcommand\Feabs{\setF\tsb{e-abs}}
\newcommand\Teabs{\setT\tsb{e-abs}}
\newcommand\Tlineabs{\Teabs\tsp{lin}}

\newcommand\Fsig{\setF_{\Sigt}}
\newcommand\Tsig{\setT_{\Sigt}}
\newcommand\Tlinsig{\Tsig\tsp{lin}}
\newcommand\Fesig{\setF_{\Sigtw}}
\newcommand\Tesig{\setT_{\Sigtw}}
\newcommand\Tlinesig{\Tesig\tsp{lin}}

\newcommand\Fp{\setF_{\setPt}}
\newcommand\Tp{\setT_{\setPt}}
\newcommand\Tlinp{\Tp\tsp{lin}}
\newcommand\Fep{\setF_{\setPtw}}
\newcommand\Tep{\setT_{\setPtw}}
\newcommand\Tlinep{\Tep\tsp{lin}}

\newcommand\Fmpec{\setF\tsb{mpcc}}
\newcommand\Fempec{\setF\tsb{e-mpcc}}
\newcommand\Tmpec{\setT\tsb{mpcc}}
\newcommand\Tcompl{\setT_\compl}
\newcommand\Tempec{\setT\tsb{e-mpcc}}
\newcommand\Tlinmpec{\Tmpec\tsp{lin}}
\newcommand\Tlinempec{\Tempec\tsp{lin}}

\newcommand{\ce}{c_{\setE}}
\newcommand{\ci}{c_{\setI}}
\newcommand{\cz}{c_{\setZ}}
\newcommand\cA{c_{\setA}}

\newcommand\bcE{\_c_\setE}
\newcommand\bcZ{\_c_\setZ}

\newcommand\zt{z^t}
\newcommand\zw{z^w}
\newcommand\hzt{\^z^t}
\newcommand\hzw{\^z^w}

\newcommand\alpt{\alpha^t}
\newcommand\alpw{\alpha^w}
\newcommand\sigt{\sigma^t}
\newcommand\sigw{\sigma^w}
\newcommand\Sigt{\Sigma^{t}}
\newcommand\Sigw{\Sigma^{w}}
\newcommand\Sigtw{\Sigma^{t,w}}

\newcommand\calp[1][]{\card{\alpha#1}}

\newcommand\csig[1][]{\card{\sigma#1}}

\newcommand\lame{\lambda_{\setE}}
\newcommand\lami{\lambda_{\setI}}
\newcommand\lamz{\lambda_{\setZ}}

\newcommand\ut{u^t}
\newcommand\uw{u^w}
\newcommand\hut{\^u^t}
\newcommand\huw{\^u^w}

\newcommand\vt{v^t}
\newcommand\vw{v^w}
\newcommand\hvt{\^v^t}
\newcommand\hvw{\^v^w}

\newcommand\Lc{\mathcal L_\compl}

\newcommand\muu{\mu_u}
\newcommand\muv{\mu_v}

\newenvironment{defarray}[2][l]
{\left\{\,#2\,\left|\def\arraystretch{\defarraystretch}\begin{array}{#1}}
{\end{array}\right.\!\right\}}
\newenvironment{ccases}[1][ll]
{\left\{\!\begin{array}{#1}}{\end{array}\!\right\}}

\newcommand\delpos[1]{\langle\delta#1\rangle^+}
\newcommand\delneg[1]{\langle\delta#1\rangle^-}

\ifnum\FmtChoice=1
  
\fi

\newcommand\abstracttext{%
  This work continues an ongoing effort to compare
  non-smooth optimization problems in abs-normal form to Mathematical Programs with Complementarity Constraints (MPCCs).
  We study general Nonlinear Programs
  with equality and inequality constraints in abs-normal form,
  so-called Abs-Normal NLPs,
  and their relation to equivalent MPCC reformulations.
  We introduce the concepts of Abadie's and Guignard's kink qualification and prove relations to MPCC-ACQ and MPCC-GCQ for the counterpart MPCC formulations.
  Due to non-uniqueness of a specific slack reformulation suggested in
  \cite{Hegerhorst_Steinbach:2019}, the relations are non-trivial.
  It turns out that constraint qualifications of Abadie type are preserved. We also prove the weaker result
  that equivalence of Guginard's (and Abadie's) constraint qualifications
  for all branch problems hold, while the question of GCQ preservation
  remains open.
  Finally, we introduce M-stationarity and B-stationarity concepts
  for abs-normal NLPs and prove
  first order optimality conditions corresponding to MPCC counterpart formulations.
}
\newcommand\keywordlist{%
  Non-smooth NLPs,
  abs-normal form,
  MPCCs,
  Abadie and Guignard type constraint qualifications,
  optimality conditions%
}
\newcommand\amscodelist{%
  90C30, 
  90C33, 
  90C46
}
\newcommand\addrIfAM{Leibniz Universit\"at Hannover,
  Institut f\"ur Angewandte Mathematik,
  Welfengarten~1, 30167 Hannover, Germany}
\newcommand\addrIMO{Technische Universit\"at Carolo-Wilhelmina zu Braunschweig,
  Institut f\"ur Mathematische Optimierung, Universit\"ats\-platz~2,
  38106 Braunschweig, Germany}

\ifcase\FmtChoice\or\or

\title{Relations Between Abs-Normal NLPs and MPCCs.
  Part 2: Weak Constraint Qualifications}

\date{\ISOToday}

\author{L.C. Hegerhorst-Schultchen%
  \thanks{\addrIfAM. \email{hegerhorst@ifam.uni-hannover.de}}
  \and
  C. Kirches\thanks{\addrIMO. \email{c.kirches@tu-bs.de}}
  \and
  M.C.~Steinbach\thanks{\addrIfAM. \email{mcs@ifam.uni-hannover.de}}
}
\shortauthor{Hegerhorst-Schultchen, Kirches, Steinbach}

\acknowledgements{C.~Kirches discloses support by the German Federal Ministry of Education and Research through grants
n\textsuperscript{o} 05M17MBA-MoPhaPro, 05M18MBA-MOReNet, and 01/S17089C-ODINE,
and by Deutsche Forschungsgemeinschaft (DFG) through Priority Programme 1962, grant Ki1839/1-2.}

\manuscriptcopyright{{\copyright} the authors}
\manuscriptlicense{CC-BY-SA 4.0}

\manuscriptsubmitted{2020-07-30}
\manuscriptaccepted{2021-02-12}
\manuscriptvolume{2}
\manuscriptnumber{6673}
\manuscriptyear{2021}
\manuscriptdoi{10.46298/jnsao-2021-6673}

\fi

\begin{document}

\ifcase\FmtChoice
\author{
  \name{L.C.~Hegerhorst-Schultchen\textsuperscript{a}$^\ast$%
    \thanks{Corresponding author. E-Mail {\tt hegerhorst@ifam.uni-hannover.de}.}
    \and
    C. Kirches\textsuperscript{b}
    \and
    M.C.~Steinbach\textsuperscript{a}}
  \affil{\textsuperscript{a}\addrIfAM.}
  \affil{\textsuperscript{b}\addrIMO.}
}
\title{Relations Between Abs-Normal NLPs and MPCCs\\
  Part 2: Weak Constraint Qualifications}
\date{\today}
\maketitle
\begin{abstract}
  \abstracttext
\end{abstract}
\begin{keywords}
  \keywordlist
\end{keywords}
\begin{amscode}
  \amscodelist
\end{amscode}
\or
\author{L.C.~Hegerhorst-Schultchen}
\address{L.C. Hegerhorst-Schultchen, \addrIfAM.}
\email{hegerhorst@ifam.uni-hannover.de}
\author{C. Kirches}
\address{C. Kirches, \addrIMO.}
\email{c.kirches@tu-bs.de}
\author{M.C.~Steinbach}
\address{M.C.~Steinbach, \addrIfAM.}
\email{mcs@ifam.uni-hannover.de}
\title[Relations Between Abs-Normal NLPs and MPCCs. Part 2: Weak CQs]
{Relations Between Abs-Normal NLPs and MPCCs\\
  Part 2: Weak Constraint Qualifications}
\begin{abstract}
  \abstracttext
\end{abstract}
\keywords{\keywordlist}
\subjclass{\amscodelist}
\maketitle
\or
\maketitle
\begin{abstract}
  \abstracttext
\end{abstract}
\fi

\section{Introduction}
\label{sec:introduction}

Non-smooth nonlinear optimization problems of the form
 \begin{equation*}
   \minst[x]{f(x)}
   g(x) = 0,\quad h(x) \ge 0, \tag{NLP} \label{eq:nlp}
 \end{equation*}
where $\Domx\subseteq \R^n$ is open, the objective $f\in\Cd(\Domx,\R)$ is a smooth function
and the equality and inequality constraints $g\in\Cabs(\Domx,\R^{m_1})$ and
$h\in\Cabs(\Domx,\R^{m_2})$ are level-1 non-smooth functions
that can be written in \emph{abs-normal form} \cite{Griewank_2013}
have been considered by the authors in \cite{Hegerhorst_Steinbach:2019}.
In this problem class, the non-smoothness is caused by finitely many occurrences
of the absolute value function, the branches of which we represent by
signature matrices $\Sigma = \diag(\sigma)$ with $\sigma \in \set{-1,0,1}^s$.
We find functions
$\ce\in\Cd(\Domxz,\R^{m_1})$,
$\ci\in\Cd(\Domxz,\R^{m_2})$ and
$\cz\in\Cd(\Domxz,\R^s)$ with $\Domxz=\Domx\x\Domz$,
$\Domz\subseteq\R^s$ open and \emph{symmetric}
(i.e., $z \in \Domz$ implies $\Sigma z \in \Domz$
for every signature matrix $\Sigma$) such that
\begin{align*}
g(x)&=\ce(x,\abs{z}),\\
h(x)&=\ci(x,\abs{z}),\tag{ANF}\label{eq:anf}\\
z&=\cz(x,\abs{z}) \qtext{with $\partial_2 \cz(x,\abs{z})$ strictly lower triangular}.
\end{align*}
Here we use a single joint \emph{switching constraint} $\cz$
for both $g$ and $h$, and reuse \emph{switching variables}~$z_i$
if the same argument repeats as an absolute value argument in $g$ or $h$.
Due to the strictly lower triangular form of $\partial_2 \cz(x,\abs{z})$,
component $z_j$ of $z$ can be computed from $x$ and the components $z_i$, $i<j$.
Hence, the variable $z$ is implicitly defined by $z=\cz(x,\abs{z})$,
and to denote this dependence explicitly, we write $z(x)$ in the following.
Whenever we address questions of solvability of this system, we make use of the reformulation $\abs{z_i} = \sign(z_i) z_i$.

\begin{definition}[Signature of $z$]
  \label{def:signature}
  Let $x \in \Domx$. We define the \emph{signature} $\sigma(x)$
  and the associated \emph{signature matrix} $\Sigma(x)$ as
  \begin{equation*}
    \sigma(x) \define \sign(z(x)) \in \set{-1,0,1}^s \qtextq{and}
    \Sigma(x) \define \diag(\sigma(x)).
  \end{equation*}
  A signature vector $\sigma(x) \in \set{-1,1}^s$ is called \emph{definite},
  otherwise \emph{indefinite}.
\end{definition}

For signatures $\sigma,\^\sigma \in \set{-1, 0, 1}^s$,
it is convenient to use the partial order
\begin{equation*}
  \^\sigma \succeq \sigma \iff
  \^\sigma_i \sigma_i \ge \sigma_i^2 \qtextq{for} i=1,\dots,s,
\end{equation*}
i.e., $\^\sigma_i$ is arbitrary if $\sigma_i = 0$ and $\^\sigma_i = \sigma_i$ otherwise.
Thus, we may write $\abs{z(x)}=\Sigma z(x)$ for every $\sigma \succeq \sigma(x)$.
Further, we may consider the system $z=\cz(x,\Sigma z)$ \textbf{for fixed signature} $\Sigma = \Sigma(\^x)$ around a point of interest $\^x$.
By the implicit function theorem, the system has a locally unique solution $z(x)$ for fixed signature $\Sigma$, and the associated Jacobian at $\^x$ reads
\begin{equation*}
  \partial_x z(\^x) = [I - \partial_2 \cz(\^x,\abs{z(\^x)}) \Sigma]^{-1}
  \partial_1 \cz(\^x,\abs{z(\^x)})\in\R^{s \x n}.
\end{equation*}

\begin{definition}[Active Switching Set]
  We call the switching variable $z_i$ \emph{active} if $z_i(x) = 0$.
  The \emph{active switching set} $\alpha$
  consists of all indices of active switching variables,
  \begin{equation*}
    \alpha(x) \define \defset{1 \leq i \leq s}{z_i(x) = 0}.
  \end{equation*}
  The numbers of active and inactive switching variables are $\calp[(x)]$
  and $\csig[(x)] \define s - \calp[(x)]$.
\end{definition}

\subsection*{Literature}

Griewank and Walther have developed a class of
unconstrained abs-normal problems in \cite{Griewank_2013,Griewank_Walther_2016}.
These problems offer particularly attractive theoretical features
when generalizing KKT theory and stationarity concepts to non-smooth problems.
Under certain regularity conditions, they are computationally tractable by active-set
type algorithms with guaranteed convergence based on piecewise linearizations and using
algorithmic differentiation techniques \cite{Griewank_Walther_2017,Griewank_Walther_2019}.

Another important class of non-smooth optimization problems are Mathematical
Programs with Complementarity (or Equilibrium) Constraints (MPCCs, MPECs);
an overview can be found in the book \cite{Luo_et_al:1996}.
Since standard theory for smooth optimization problems cannot be applied, new constraint qualifications and corresponding optimality conditions were introduced.
By now, there is a large body of literature on MPCCs, and we refer to
\cite{Ye:2005} for an overview of the basic concepts and theory.
In this paper, constraint qualifications for MPCCs in the sense of Abadie and Guignard and corresponding stationarity concepts (in particular M-stationarity and MPCC-linearized B-stationarity) are considered.
Details can be found in \cite{Scheel_Scholtes_2000}, \cite{Luo_et_al:1996} and \cite{FlegelDiss}.

In \cite{Hegerhorst_et_al:2019:MPEC1} we have shown that
unconstrained abs-normal problems constitute a subclass of MPCCs.
In addition, we have studied regularity concepts of linear independence and of
Mangasarian-Fromovitz type.  
As a direct generalization of unconstrained abs-normal problems
we have considered NLPs with abs-normal constraints,
which turned out to be equivalent to the class of MPCCs.
In \cite{Hegerhorst_Steinbach:2019} we have extended optimality conditions of unconstrained abs-normal problems
to general abs-normal NLPs under the linear independence kink qualification
using a reformulation of inequalities with absolute value slacks.
We have compared these optimality conditions to concepts of MPCCs
in \cite{Hegerhorst_et_al:2019:MPEC2}.
We have also shown that the above slack reformulation preserves kink qualifications of linear independece type but not of Mangasarian-Fromovitz type.
More details and additional information about these results as well as about the results in this paper can be found in \cite{Hegerhorst-Schultchen2020}.

\subsection*{Contributions.} In the present article we extend our detailed comparative study of general abs-normal NLPs and MPECs,
considering constraint qualifications of Abadie and Guignard type both for the standard formulation
and for the reformulation with absolute value slacks.
In particular, we show that constraint qualifications of Abadie type are equivalent for
abs-normal NLPs and MPCCs and that they are preserved under the slack reformulation.
For constraint qualifications of Guignard type we cannot prove equivalence
but only certain implications.
However, when considering branch problems of abs-normal NLPs and MPCCs,
we again obtain equivalence of constraint qualifications of Abadie
and Guignard type, even under the slack reformulation.
Finally we introduce Mordukhovich and Bouligand stationarity concepts
for abs-normal NLPs and prove first order optimality conditions
using the corresponding concepts for MPCCs.

\subsection*{Structure.} The remainder of this article is structured as follows.
In \cref{sec:anf-formulations}
we state the general abs-normal NLP and its reformulation with absolute value slacks that
permits to dispose of inequalities. We also present the branch structure of both formulations
and introduce appropriate definitions of the \tcone and the linearized cone. Using these tools,
we introduce kink qualifications in the sense of Abadie and Guignard.
In terms of these two kink qualifications, we then compare the regularity of the equality-constrained form of an
abs-normal NLP to the inequality-constrained one.
In \cref{sec:counterpart} we introduce counterpart MPCCs
for the two formulations of abs-normal NLPs
and discuss the associated MPCC-constraint qualifications, namely
MPCC-ACQ and MPCC-GCQ.
In \cref{sec:qualifications} we investigate the interrelation of the regularity concepts
for abs-normal NLPs and MPECs and find the situation to be more intricate than
under LICQ and MFCQ discussed in \cite{Hegerhorst_Steinbach:2019}.
Finally, in \cref{sec:stationarity} we introduce abs-normal variants of M-stationarity and B-stationarity
as first order necessary optimality conditions for abs-normal NLPs
and prove equivalence relations for the respective MPCC stationarity conditions.
We conclude with \cref{sec:conclusions}. 

\section{Inequality and Equality Constrained Formulations}
\label{sec:anf-formulations}

In this section we consider two different treatments of inequality constraints
for non-smooth NLPs in abs-normal form.

\subsection{General Abs-Normal NLPs}
\label{sec:anf-inequalities}

Substituting the representation \eqref{eq:anf} of constraints in abs-normal form
into the general problem \eqref{eq:nlp}, we obtain a general abs-normal NLP.
Here, we use the variables $(t,\zt)$ instead of $(x,z)$ and analogously $\sigt(t)$ and $\alpt(t)$ instead of $\sigma(x)$ and $\alpha(x)$.

\begin{definition}[Abs-Normal NLP]
   Let $\Domt$ be an open subset of $\R^{n_t}$.
   A non-smooth NLP is called an \emph{abs-normal} NLP
   if functions $f \in \Cd(\Domt,\R)$, $\ce \in \Cd(\Domtzt,\R^{m_1})$,
   $\ci \in \Cd(\Domtzt,\R^{m_2})$, and $\cz \in \Cd(\Domtzt,\R^{s_t})$ with $d \ge 1$ exist
   such that it reads
   \begin{equation*}
     \minst[t, \zt]{f(t)}
      \ce(t, \abs{\zt}) = 0,\quad
      \ci(t, \abs{\zt}) \ge 0, \tag{I-NLP} \label{eq:i-anf}\quad
      \cz(t, \abs{\zt}) - \zt = 0,
   \end{equation*}
   where $\Domzt$ is open and symmetric and
   $\partial_2 \cz(x,\abs{\zt})$ is strictly lower triangular.
   The feasible set of \eqref{eq:i-anf} is
   $\Fabs \define \defset{(t, \zt)}{
     \ce(t, \abs\zt) = 0,
     \ci(t, \abs\zt) \ge 0,
     \cz(t, \abs\zt) = \zt
   }$.
\end{definition}
\begin{definition}[Active Inequality Set]
  Let $(t,\zt(t))\in \Fabs$. We call the inequality constraint
  $i\in\setI$ \emph{active} if $c_i(t,\abs{\zt(t)})=0$.
  The \emph{active set} $\setA(t)$ consists of all indices of active inequality constraints,
   $ \setA(t)=\defset{i\in\setI}{c_i(t,\abs{\zt(t)})=0}.$
  We set $\cA \define [c_i]_{i \in \setA(t)}$
  and denote the number of active inequality constraints by $\card{\setA(t)}$.
\end{definition}

With the goal of considering kink qualifications in the spirit of Abadie and Guignard,
we define the \tcone and the abs-normal-linearized cone.

\begin{definition}[\TCone and Abs-Normal-Linearized Cone for \eqref{eq:i-anf}]\label{def:cones-i-anf}
Consider a feasible point $(t,\zt)$ of \eqref{eq:i-anf}. The \emph{\tcone} to $\Fabs$ at $(t,\zt)$ is
\begin{equation*}
  \Tabs(t, \zt) \define
  \begin{defarray}{(\delta t, \delta\zt)}
    \exists \tau_k \searrow 0,\ \Fabs \ni (t_k,\zt_k) \to (t, \zt){:} \\
    \tau_k^{-1} (t_k - t, \zt_k - \zt) \to (\delta t, \delta\zt)
  \end{defarray}
  .
\end{equation*}
With $\delta\zeta_i:=\abs{\delta\zt_i}$ if $i\in\alpt(t)$
and $\delta\zeta_i:=\sigt_i(t) \delta\zt_i$ if $i\notin\alpt(t)$,
the \emph{abs-normal-linearized cone} is
\begin{equation*}
  \Tlinabs(t, \zt) \define
  \begin{defarray}[r@{\medspace}l]{(\delta t, \delta\zt)}
    \partial_1 \ce(t, \abs\zt) \delta t +
    \partial_2 \ce(t, \abs\zt) \delta\zeta &= 0, \\
    \partial_1 \cA(t, \abs\zt) \delta t +
    \partial_2 \cA(t, \abs\zt) \delta\zeta &\ge 0, \\
    \partial_1 \cz(t, \abs\zt) \delta t +
    \partial_2 \cz(t, \abs\zt) \delta\zeta &= \delta\zt
  \end{defarray}
  .
\end{equation*}
\end{definition}

To prove that the \tcone is a subset of the abs-normal-linearized cone,
we follow an idea from \cite{FlegelDiss}, where an analogous result for MPCCs was obtained.
First, we need the definition of the smooth branch NLPs for \eqref{eq:i-anf}
with their standard \tcones and linearized cones.

\begin{definition}[Branch NLPs for \eqref{eq:i-anf}]\label{def:branch-anf}
 Consider a feasible point $(\^t,\hzt)$ of \eqref{eq:i-anf}.
 Choose $\sigt \in\{-1,1\}^{s_t}$ with $\sigt \succeq \sigt(\^t)$ and set $\Sigt=\diag(\sigt)$.
 The branch problem NLP($\Sigt$) is defined as
  \ifcase0
  \begin{align*}
    \sminst[t,\zt]{f(t)}
    & \ce(t,\Sigt \zt) = 0, \quad
    \ci(t,\Sigt \zt) \ge 0, \\
    & \cz(t,\Sigt \zt) - \zt = 0, \quad
    \Sigt \zt\ge 0.
    \tag{NLP($\Sigt$)}\label{eq:branch-anf}
  \end{align*}
  \else
  \begin{align*}
    \minst[t,\zt]{f(t)}
    & \ce(t,\Sigt \zt) = 0,
     \ci(t,\Sigt \zt) \ge 0,\tag{NLP($\Sigt$)}\label{eq:branch-anf}
     \cz(t,\Sigt \zt) = \zt,
     \Sigt \zt\ge 0.
  \end{align*}
  \fi
  The feasible set of \eqref{eq:branch-anf},
  which always contains $(\^t,\hzt)$, is denoted by $\Fsig$.
\end{definition}

\begin{definition}[\TCone and Linearized Cone for \eqref{eq:branch-anf}]\label{def:cones-branch-anf}
 Given \eqref{eq:branch-anf}, consider a feasible point $(t,\zt)$.
 The \emph{\tcone} to $\Fsig$ at $(t,\zt)$ is
  \begin{equation*}
    \Tsig(t,\zt) \define
    \begin{defarray}{(\delta t,\delta\zt)}
      \exists \tau_k \searrow 0,\ \Fsig \ni (t_k, \zt_k) \to (t, \zt){:} \\
      \tau_k^{-1} (t_k - t, \zt_k - \zt) \to (\delta t, \delta\zt)
    \end{defarray}
    .
  \end{equation*}
  The \emph{linearized cone} is
  \begin{equation*}
    \Tlinsig(t, \zt) \define
    \begin{defarray}[r@{\medspace}l]{(\delta t,\delta\zt)}
      \partial_1 \ce(t, \Sigt\zt) \delta t +
      \partial_2 \ce(t, \Sigt\zt) \Sigt\delta\zt &= 0, \\
      \partial_1 \cA(t, \Sigt\zt) \delta t +
      \partial_2 \cA(t, \Sigt\zt) \Sigt\delta\zt &\ge 0, \\
      \partial_1 \cz(t, \Sigt\zt) \delta t +
      \partial_2 \cz(t, \Sigt\zt) \Sigt\delta\zt &= \delta\zt, \\
      \sigt_i \delta\zt_i & \ge 0,\ i \in \alpt(t)
    \end{defarray}
    .
  \end{equation*}
\end{definition}

\begin{remark}
  Observe that $\abs{\zt} = \Sigt \zt$ in \cref{def:branch-anf} and \cref{def:cones-branch-anf},
  and for every $\Sigt$ we have $\Fsig \subseteq \Fabs$,
  $\Tsig(t,\zt) \subseteq \Tabs(t,\zt)$, and
  $\Tlinsig(t,\zt) \subseteq \Tlinabs(t,\zt)$.
\end{remark}

\begin{lemma}\label{le:decomp-cones-anf}
Consider a feasible point $(\^t,\hzt)$ of \eqref{eq:i-anf} with associated branch problems \eqref{eq:branch-anf}.
Then, the following decompositions of the \tcone and of the abs-normal-linearized cone of \eqref{eq:i-anf} hold:
 \begin{equation*}
  \Tabs(\^t,\hzt)=\bigcup_{\Sigt} \Tsig(\^t,\hzt)
  \qtextq{and}
  \Tlinabs(\^t,\hzt)=\bigcup_{\Sigt} \Tlinsig(\^t,\hzt).
 \end{equation*}
\end{lemma}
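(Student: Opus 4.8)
The plan is to establish each decomposition by a pair of inclusions, where the reverse inclusion ``$\supseteq$'' is immediate and all the work lies in ``$\subseteq$''. Indeed, the remark preceding the lemma already records $\Tsig(\^t,\hzt) \subseteq \Tabs(\^t,\hzt)$ and $\Tlinsig(\^t,\hzt) \subseteq \Tlinabs(\^t,\hzt)$ for every admissible $\Sigt$, so forming the union over all definite $\Sigt \succeq \sigt(\^t)$ yields ``$\supseteq$'' in both claims simultaneously. It therefore remains to show that every element of $\Tabs(\^t,\hzt)$ lies in some $\Tsig(\^t,\hzt)$, and every element of $\Tlinabs(\^t,\hzt)$ lies in some $\Tlinsig(\^t,\hzt)$.

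For the \tcone inclusion I would take $(\delta t,\delta\zt) \in \Tabs(\^t,\hzt)$ with a defining sequence $\Fabs \ni (t_k,\zt_k) \to (\^t,\hzt)$ and $\tau_k^{-1}(t_k-\^t,\zt_k-\hzt) \to (\delta t,\delta\zt)$. The key observation is that $\zt_k \to \hzt$ componentwise, so for every inactive index $i \notin \alpt(\^t)$, where $\hzt_i \neq 0$, the sign of $\zt_{k,i}$ stabilizes to $\sigt_i(\^t)$ for large $k$; hence $\sigt(t_k) \succeq \sigt(\^t)$ eventually. For each such $k$ I choose a definite signature matrix $\Sigt \succeq \sigt(t_k) \succeq \sigt(\^t)$, which gives $\abs{\zt_k} = \Sigt \zt_k$ and $\Sigt \zt_k \ge 0$, so that $(t_k,\zt_k) \in \Fsig$. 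Since there are only finitely many signature matrices, infinitely many $k$ share one common $\Sigt$; passing to this subsequence, the surviving sequence lies in $\Fsig$ and certifies $(\delta t,\delta\zt) \in \Tsig(\^t,\hzt)$.

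For the linearized-cone inclusion I would proceed by an explicit, direction-dependent sign selection. Given $(\delta t,\delta\zt) \in \Tlinabs(\^t,\hzt)$, I define $\Sigt = \diag(\sigt)$ by $\sigt_i \define \sigt_i(\^t)$ for $i \notin \alpt(\^t)$, and $\sigt_i \define \sign(\delta\zt_i)$ for active $i$ with $\delta\zt_i \neq 0$, choosing $\sigt_i \in \set{-1,1}$ arbitrarily when $\delta\zt_i = 0$. This $\Sigt$ is definite with $\Sigt \succeq \sigt(\^t)$, and it satisfies $\Sigt\delta\zt = \delta\zeta$ componentwise: for inactive $i$ both sides equal $\sigt_i(\^t)\delta\zt_i$, while for active $i$ both equal $\abs{\delta\zt_i}$. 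Substituting the identity $\Sigt\delta\zt = \delta\zeta$ turns the three (in)equalities defining $\Tlinabs(\^t,\hzt)$ into precisely those defining $\Tlinsig(\^t,\hzt)$, whereas the extra branch condition $\sigt_i\delta\zt_i \ge 0$ for $i \in \alpt(\^t)$ reduces to $\abs{\delta\zt_i} \ge 0$ and holds trivially; hence $(\delta t,\delta\zt) \in \Tlinsig(\^t,\hzt)$.

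I expect the main obstacle to be the \tcone direction, where one must guarantee that a single branch absorbs infinitely many members of the approximating sequence. This hinges on combining the finiteness of the signature set with the continuity argument $\sigt(t_k) \succeq \sigt(\^t)$, and care is needed because the branch realizing feasibility of $(t_k,\zt_k)$ may a priori depend on $k$; the subsequence extraction is what resolves this. By contrast, the linearized-cone direction carries no analytic subtlety once the componentwise identity $\Sigt\delta\zt = \delta\zeta$ is verified, since the defining relations of the two cones then coincide and the auxiliary sign constraints are vacuous under the chosen $\Sigt$.
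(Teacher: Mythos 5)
Your proof is correct and follows essentially the same route as the paper's: the paper first localizes the feasible set, showing $\Fabs\cap\setN=\bigcup_{\Sigt}(\Fsig\cap\setN)$ via the same continuity-of-signs argument, and then invokes the fact that the \tcone of a finite union is the union of the \tcones --- which is exactly the pigeonhole/subsequence extraction you carry out directly on the defining sequence. Your explicit sign selection for the linearized cone likewise just spells out what the paper dismisses as ``comparing definitions,'' so there is no substantive difference.
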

\begin{proof}
 We first consider the \tcones and show that a neighborhood $\setN$ of $(\^t,\hzt)$ exists such that
 \begin{equation*}
  \Fabs\cap \setN = \bigcup_{\Sigt} (\Fsig\cap \setN).
 \end{equation*}
 The inclusion $\supseteq$ holds for every neighborhood $\setN$
 since $\Fsig\subseteq\Fabs$ for all $\Sigt$.
 To show the inclusion $\subseteq$ we consider an index $i\notin\alpt(\^t)$.
 Then, by continuity, $\epsilon_i>0$ exists with
 $\sigt_i(t) = \sigt_i(\^t) \in \set{-1,+1}$ for all $t\in B_{\epsilon_i}(\^t)$.
 Now set $\epsilon\define\min_{i\notin\alpt(\^t)}\epsilon_i$,
 $\setN\define B_{\epsilon} \x \R^{n_t}$,
 and consider $(t, \zt) \in \setN \cap \Fabs$.
 With the choice $\sigt_i=\sigt_i(t)$ for $i\notin\alpt(t)$
 and $\sigt_i=1$ for $i\in\alpt(t)$
 we find $\Sigt=\diag(\sigt)$ such that
 $(t, \zt) \in \setN\cap \Fsig$ since $\alpt(t) \subseteq \alpt(\^t)$. Thus,
 \begin{equation*}
  \Fabs\cap \setN = \bigcup_{\Sigt} (\Fsig\cap \setN).
 \end{equation*}
 Now, let $\setT(\^t,\hzt; \setF)$ generically denote
 the \tcone to $\setF$ at $(\^t, \hzt)$.
 Then,
 \begin{align*}
   \Tabs(\^t,\hzt)
   =
   \setT(\^t,\hzt; \Fabs)
   &=
   \setT(\^t,\hzt; \Fabs\cap \setN)
   =
   \setT(\^t,\hzt; {\textstyle\bigcup_{\Sigt}} (\Fsig \cap \setN)) \\
   &=
   \bigcup_{\Sigt} \setT(\^t,\hzt; \Fsig \cap \setN)
   =
   \bigcup_{\Sigt} \setT(\^t,\hzt; \Fsig)
   =
   \bigcup_{\Sigt} \Tsig(\^t,\hzt).
 \end{align*}
 Here the fourth equality holds since the number of branch problems is finite.
 The decomposition of $\Tlinabs$ follows directly
 by comparing definitions of $\Tlinabs$ and $\Tlinsig$.
\end{proof}

\begin{lemma}\label{le:i-cones}
Let $(t,\zt)$ be feasible for \eqref{eq:i-anf}. Then,
 \begin{equation*}
  \Tabs(t,\zt)\subseteq \Tlinabs(t,\zt) \qtextq{and} \Tabs(t,\zt)^* \supseteq \Tlinabs(t,\zt)^*.
 \end{equation*}
\end{lemma}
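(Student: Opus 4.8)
The plan is to push all the work onto the smooth branch problems \eqref{eq:branch-anf} by way of the decomposition in \cref{le:decomp-cones-anf}, and then invoke the classical fact that for a smooth NLP the tangent cone is always contained in the linearized cone. I would start by disposing of the second claim: since polarity reverses set inclusions, the inclusion $\Tabs(t,\zt) \subseteq \Tlinabs(t,\zt)$ immediately yields $\Tabs(t,\zt)^* \supseteq \Tlinabs(t,\zt)^*$. Hence the whole statement reduces to proving $\Tabs(t,\zt) \subseteq \Tlinabs(t,\zt)$.

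For this inclusion I would appeal to \cref{le:decomp-cones-anf}, which represents both cones as the same finite union over branch signatures $\Sigt \succeq \sigt(t)$:
\[
  \Tabs(t,\zt) = \bigcup_{\Sigt} \Tsig(t,\zt), \qquad
  \Tlinabs(t,\zt) = \bigcup_{\Sigt} \Tlinsig(t,\zt).
\]
A union is contained in the corresponding union as soon as the inclusion holds termwise, so it suffices to show $\Tsig(t,\zt) \subseteq \Tlinsig(t,\zt)$ for each fixed $\Sigt$. The key point is that \eqref{eq:branch-anf} is an ordinary smooth NLP: with the signature fixed one has $\abs{\zt} = \Sigt\zt$, so every absolute value is replaced by the smooth linear map $\zt \mapsto \Sigt\zt$, and all constraints are built from the $\Cd$ functions $\ce$, $\ci$, $\cz$ together with the linear bound $\Sigt\zt \ge 0$. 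For such a problem the inclusion of the Bouligand tangent cone in the linearized cone holds unconditionally, without any constraint qualification, so $\Tsig(t,\zt) \subseteq \Tlinsig(t,\zt)$ follows once $\Tlinsig$ is identified as the genuine linearization of the branch constraints.

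That identification is the only step requiring care, and it is the place I would expect bookkeeping rather than real difficulty. One checks by the chain rule that linearizing $\ce(t,\Sigt\zt)=0$ gives $\partial_1\ce\,\delta t + \partial_2\ce\,\Sigt\delta\zt = 0$, and analogously for $\cA$ and for $\cz(t,\Sigt\zt)=\zt$; moreover the components of the bound $\Sigt\zt \ge 0$ that are active at $(t,\zt)$ are exactly those with $\zt_i = 0$, namely the indices $i \in \alpt(t)$, and these linearize to $\sigt_i\delta\zt_i \ge 0$. This reproduces \cref{def:cones-branch-anf} verbatim, so no constraint is spuriously added to or omitted from the linearized cone, completing the reduction. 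The whole argument thus hinges on \cref{le:decomp-cones-anf} and the elementary smooth-NLP inclusion, with the matching of the two cone definitions being the only technical obstacle.
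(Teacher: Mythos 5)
Your proposal is correct and follows essentially the same route as the paper: decompose both cones via \cref{le:decomp-cones-anf}, observe that each branch problem \eqref{eq:branch-anf} is a smooth NLP so that $\Tsig(t,\zt)\subseteq\Tlinsig(t,\zt)$ holds by standard theory without any constraint qualification, and obtain the dual inclusion by polarity. The additional verification that $\Tlinsig$ is exactly the standard linearized cone of the branch constraints is a worthwhile bit of care that the paper leaves implicit.
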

\begin{proof}
 The branch NLPs are smooth, hence the inclusion $\Tsig(t,\zt) \subseteq \Tlinsig(t,\zt)$ holds by standard NLP theory.
 Then, the first inclusion follows directly from \cref{le:decomp-cones-anf}
 and the second inclusion follows by dualization of the cones.
\end{proof}

In general, the reverse inclusions do not hold. This leads to the following definitions.

\ifCmp
\begin{definition}
  [Abadie's and Guignard's Kink Qualifications for \eqref{eq:i-anf}]
  \label{def:akq}
  \label{def:gkq}
  Consider a feasible point $(t,\zt(t))$ of \eqref{eq:i-anf}.
  We say that \emph{Abadie's Kink Qualification (AKQ)} holds at $t$
  if \ifnum\FmtChoice=2 we have \fi $\Tabs(t,\zt(t)) = \Tlinabs(t,\zt(t))$,
  and that \emph{Guignard's Kink Qualification (GKQ)} holds at $t$
  if $\Tabs(t,\zt(t))^* = \Tlinabs(t,\zt(t))^*$.
\end{definition}
\else
\begin{definition}[Abadie's Kink Qualification for \eqref{eq:i-anf}]\label{def:akq}
Consider a feasible point $(t,\zt(t))$ of \eqref{eq:i-anf}.
We say that \emph{Abadie's Kink Qualification (AKQ)} holds at $t$ if $\Tabs(t,\zt(t))=\Tlinabs(t,\zt(t))$.
\end{definition}

\begin{definition}[Guignard's Kink Qualification for \eqref{eq:i-anf}]\label{def:gkq}
Consider a feasible point $(t,\zt(t))$ of \eqref{eq:i-anf}.
We say that \emph{Guignard's Kink Qualification (GKQ)} holds at $t$ if $\Tabs(t,\zt(t))^*=\Tlinabs(t,\zt(t))^*$.
\end{definition}
\fi

The decomposition of cones in \cref{le:decomp-cones-anf}
and its dualization immediately lead to the next
\ifCmp theorem\else results\fi.

\ifCmp
\begin{theorem}[ACQ/GCQ for all \eqref{eq:branch-anf}
  implies AKQ/GKQ for \eqref{eq:i-anf}]
  \label{th:branch-acq_akq}
  \label{th:branch-gcq_gkq}
  Consider a feasible point $(t,\zt(t))$ of \eqref{eq:i-anf}
  with associated branch problems \eqref{eq:branch-anf}.
  Then, AKQ respectively GKQ holds for \eqref{eq:i-anf} at $t$
  if ACQ respectively GCQ holds for all \eqref{eq:branch-anf} at $(t,\zt(t))$.
\end{theorem}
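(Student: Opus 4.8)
The plan is to read off both implications directly from the cone decompositions of \cref{le:decomp-cones-anf}, handling the Abadie and Guignard statements separately because the primal tangent cones combine by union whereas their duals combine by intersection. In each case I would simply substitute the branchwise constraint qualification into the decomposition, so that no genuinely new estimate is required beyond the elementary duality rule for unions of cones.

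For the AKQ claim I would start from the two decompositions
\begin{equation*}
  \Tabs(t,\zt(t)) = \bigcup_{\Sigt} \Tsig(t,\zt(t))
  \qtextq{and}
  \Tlinabs(t,\zt(t)) = \bigcup_{\Sigt} \Tlinsig(t,\zt(t))
\end{equation*}
supplied by \cref{le:decomp-cones-anf}. If ACQ holds for every branch problem \eqref{eq:branch-anf}, then $\Tsig(t,\zt(t)) = \Tlinsig(t,\zt(t))$ for each signature matrix $\Sigt$. Taking the union over all (finitely many) $\Sigt$ then yields $\Tabs(t,\zt(t)) = \Tlinabs(t,\zt(t))$, which is exactly AKQ.

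For the GKQ claim the key observation is that dualizing a union of cones produces the \emph{intersection} of the dual cones, and likewise for the linearized cones; this is the dualization referred to just before the theorem. Applying it to the two decompositions gives $\Tabs(t,\zt(t))^* = \bigcap_{\Sigt} \Tsig(t,\zt(t))^*$ and $\Tlinabs(t,\zt(t))^* = \bigcap_{\Sigt} \Tlinsig(t,\zt(t))^*$. If GCQ holds for every branch problem, then $\Tsig(t,\zt(t))^* = \Tlinsig(t,\zt(t))^*$ for each $\Sigt$, so the two intersections coincide and $\Tabs(t,\zt(t))^* = \Tlinabs(t,\zt(t))^*$, which is GKQ.

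The only point that requires care—and the sole reason the GKQ argument differs from the AKQ one—is the switch from union to intersection under dualization. One must resist dualizing the union as a union of duals; the correct rule $\bigl(\bigcup_i C_i\bigr)^* = \bigcap_i C_i^*$ holds because a vector lies in the left-hand dual exactly when it has nonnegative inner product with every member of every $C_i$, i.e. when it lies in each dual cone simultaneously. Since \cref{le:decomp-cones-anf} yields only finitely many branch cones, no complications with infinite families arise, and the one-sided inclusions of \cref{le:i-cones} serve merely as a consistency check rather than as an ingredient of the proof.
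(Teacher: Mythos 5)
Your proposal is correct and follows exactly the paper's route: the AKQ part is the union decomposition of \cref{le:decomp-cones-anf} applied branchwise, and the GKQ part is its dualization via $\bigl(\bigcup_{\Sigt} C_{\Sigt}\bigr)^* = \bigcap_{\Sigt} C_{\Sigt}^*$, which the paper leaves implicit in the phrase ``and its dualization.'' Your explicit warning about union versus intersection under dualization is the only substantive step, and it is handled correctly.
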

\else
\begin{theorem}[ACQ for all \eqref{eq:branch-anf} implies AKQ for \eqref{eq:i-anf}]\label{th:branch-acq_akq}
 Consider a feasible point $(t,\zt(t))$ of \eqref{eq:i-anf} with associated branch problems \eqref{eq:branch-anf}.
 Then, AKQ holds for \eqref{eq:i-anf} at $t$ if ACQ holds for all \eqref{eq:branch-anf} at $(t,\zt(t))$.
\end{theorem}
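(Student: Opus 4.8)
The plan is to establish the two assertions separately but with the same engine: the cone decompositions of \cref{le:decomp-cones-anf}, combined with the elementary fact that dualization turns unions into intersections. In both cases the branchwise hypothesis is fed directly into the decomposition, so almost no additional work beyond bookkeeping should be required.

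For the AKQ part I would argue directly. Assuming ACQ holds for every branch problem \eqref{eq:branch-anf} at $(t,\zt(t))$, i.e.\ $\Tsig(t,\zt(t)) = \Tlinsig(t,\zt(t))$ for each admissible $\Sigt$, I substitute this branchwise equality into the two decompositions of \cref{le:decomp-cones-anf} to obtain
\[
  \Tabs(t,\zt(t)) = \bigcup_{\Sigt} \Tsig(t,\zt(t)) = \bigcup_{\Sigt} \Tlinsig(t,\zt(t)) = \Tlinabs(t,\zt(t)),
\]
which is precisely AKQ. Nothing further is needed here.

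For the GKQ part the key observation is the purely set-theoretic identity $\bigl(\bigcup_{\Sigt} C_{\Sigt}\bigr)^* = \bigcap_{\Sigt} C_{\Sigt}^*$ valid for any finite family of cones: a functional lies in the dual of the union exactly when it is nonnegative on each $C_{\Sigt}$ individually. Applying this to both decompositions of \cref{le:decomp-cones-anf} gives
\[
  \Tabs(t,\zt(t))^* = \bigcap_{\Sigt} \Tsig(t,\zt(t))^*, \qquad
  \Tlinabs(t,\zt(t))^* = \bigcap_{\Sigt} \Tlinsig(t,\zt(t))^*.
\]
Assuming GCQ for every branch, $\Tsig^* = \Tlinsig^*$ for each $\Sigt$, the two intersections coincide and I conclude $\Tabs^* = \Tlinabs^*$, i.e.\ GKQ. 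Alternatively, since \cref{le:i-cones} already furnishes $\Tabs^* \supseteq \Tlinabs^*$ for free, it would suffice to verify only the reverse inclusion, which follows by the same dualization.

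I expect the only point genuinely warranting care to be the dual-of-union identity in the GKQ part: one must note that it requires no convexity or closedness of the individual cones, which is fine because it is set-theoretic and the index family of signatures $\Sigt$ is finite (the same finiteness already used in \cref{le:decomp-cones-anf}). Everything else is direct substitution into the decomposition lemma, so the theorem is indeed immediate once that lemma and its dualization are available.
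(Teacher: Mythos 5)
Your proposal is correct and follows exactly the paper's argument: the paper derives both parts directly from the cone decompositions of \cref{le:decomp-cones-anf}, using for the Guignard part precisely the dualization (dual of a union equals intersection of duals) that you spell out. The only remark is that the dual-of-union identity holds for arbitrary index families, so the finiteness of the signature family, while true, is not actually needed at that step.
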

\begin{proof}
 This follows directly from \cref{le:decomp-cones-anf}.
\end{proof}

\begin{theorem}[GCQ for all \eqref{eq:branch-anf} implies GKQ for \eqref{eq:i-anf}]\label{th:branch-gcq_gkq}
 Consider a feasible point $(t,\zt(t))$ of \eqref{eq:i-anf} with associated branch problems \eqref{eq:branch-anf}.
 Then, GKQ holds for \eqref{eq:i-anf} at $t$ if GCQ holds for all \eqref{eq:branch-anf} at $(t,\zt(t))$.
\end{theorem}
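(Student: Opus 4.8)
The plan is to dualize the two cone decompositions established in \cref{le:decomp-cones-anf}, exploiting the elementary fact that polarization converts unions into intersections. This parallels the proof of \cref{th:branch-acq_akq}, but carried out at the level of dual cones rather than the cones themselves, exactly as anticipated by the remark preceding the statement.

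First I would recall from \cref{le:decomp-cones-anf} the decompositions
\begin{equation*}
  \Tabs(t,\zt(t)) = \bigcup_{\Sigt} \Tsig(t,\zt(t)) \qtextq{and} \Tlinabs(t,\zt(t)) = \bigcup_{\Sigt} \Tlinsig(t,\zt(t)),
\end{equation*}
where the union runs over the finitely many definite signatures $\sigt \succeq \sigt(t)$. Next I would invoke the set identity $(\bigcup_i C_i)^* = \bigcap_i C_i^*$, valid for any family of sets because a vector lies in the dual of the union precisely when its inner product with every element of each $C_i$ is nonnegative. Dualizing both decompositions then yields
\begin{equation*}
  \Tabs(t,\zt(t))^* = \bigcap_{\Sigt} \Tsig(t,\zt(t))^* \qtextq{and} \Tlinabs(t,\zt(t))^* = \bigcap_{\Sigt} \Tlinsig(t,\zt(t))^*.
\end{equation*}

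Finally I would bring in the hypothesis that GCQ holds at $(t,\zt(t))$ for every branch problem \eqref{eq:branch-anf}, i.e.\ $\Tsig(t,\zt(t))^* = \Tlinsig(t,\zt(t))^*$ for each $\Sigt$. Substituting this equality term by term into the two intersections gives $\Tabs(t,\zt(t))^* = \Tlinabs(t,\zt(t))^*$, which is precisely GKQ for \eqref{eq:i-anf} at $t$.

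I do not expect a genuine obstacle here; the only step requiring any attention is the polarity–union identity, and even that is routine. Since it holds for arbitrary sets, no convexity or closedness assumptions enter, and the finiteness of the signature family (already used in \cref{le:decomp-cones-anf}) means the intersection is over finitely many cones, so no subtlety about infinite intersections arises. The contrast with the harder, only partially resolved questions flagged in the introduction lies entirely in the reverse direction and in the slack reformulation, not in this implication.
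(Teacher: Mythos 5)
Your argument is correct and is exactly the paper's intended proof: the paper disposes of this theorem with the single line ``This follows directly from \cref{le:decomp-cones-anf} by dualization,'' and your write-up simply makes explicit the polarity--union identity $(\bigcup_i C_i)^* = \bigcap_i C_i^*$ and the term-by-term substitution of the branch GCQs. Nothing further is needed.
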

\begin{proof}
 This follows directly from \cref{le:decomp-cones-anf} by dualization.
\end{proof}
\fi

\subsection{Abs-Normal NLPs with Inequality Slacks}
\label{sec:anf-equalities}

Here, we use absolute values of slack variables to get rid of the inequality constraints.
This idea is due to Griewank. It has been introduced in \cite{Hegerhorst_Steinbach:2019}
and has been further investigated in \cite{Hegerhorst_et_al:2019:MPEC2}.
With slack variables $w\in \R^{m_2}$, we reformulate \eqref{eq:nlp} as follows:
   \begin{equation*}
     \minst[t,w]{f(t)}
      g(t) = 0,\quad
      h(t)-\abs{w} = 0.
   \end{equation*}
Then, we express $g$ and $h$ in abs-normal form as in \eqref{eq:anf}
and introduce additional switching variables $\zw$ to handle $\abs{w}$.
We obtain a class of purely equality-constrained abs-normal NLPs.
\begin{definition}[Abs-Normal NLP with Inequality Slacks]
  An abs-normal NLP posed in the following form is called an \emph{abs-normal NLP with inequality slacks}:
  \begin{align*}
    \sminst[t,w,\zt,\zw]{f(t)}
    & \ce(t,\abs{\zt}) = 0, \quad
     \ci(t,\abs{\zt})-\abs{\zw} = 0,\\ \tag{E-NLP}\label{eq:e-anf}
    & \cz(t,\abs{\zt})=\zt, \quad
     w=\zw,
  \end{align*}
  where $\Domzt$ is open and symmetric and
  $\partial_2 \cz(t,\abs{\zt})$ is strictly lower triangular.
  The feasible set of \eqref{eq:e-anf} is denoted by $\Feabs$ and is a lifting of $\Fabs$.
\end{definition}

 \begin{remark}
   Introducing $\abs{w}$ converts inequalities
   to pure equalities without a nonnegativity condition
   for the slack variables $w$.
   In \cite{Hegerhorst_Steinbach:2019} we have used this formulation to simplify
   the presentation of first and second order conditions for the general
   abs-normal NLP
   under the linear independence kink qualification (LIKQ).
   Later we will see that constraint qualifications of Abadie type are preserved under reformulation.
   Nevertheless, this representation causes some problems.
   In \cite{Hegerhorst_et_al:2019:MPEC2} we have shown that,
   in contrast to LIKQ,
   constraint qualifications of Mangasarian-Fromovitz type are not preserved.
   Moreover, we cannot prove compatibility of constraint qualifications of Guignard type.
   Also, note that the equation $w - \zw = 0$ (and hence $w$) cannot be eliminated as
   this would destroy the abs-normal form.
   Finally, the signs of nonzero components $w_i$ can be chosen arbitrarily and thus the slack $w$ is not uniquely determined.
   This needs to be taken into account when formulationg
   kink qualifications (KQ) for \eqref{eq:e-anf}.
 \end{remark}

We are now interested in \ifCmp deriving \else defining \fi
Abadie's and Guignard's KQ for \eqref{eq:e-anf}.
To this end, we observe that the
formulation \eqref{eq:e-anf} can be seen as a special case of \eqref{eq:i-anf}:
Let $x=(t,w)$, $z=(\zt,\zw)$, $\_f(x)=f(t)$,
$\bcE(x,\abs{z})=(\ce(t,\abs{\zt}), \ci(t,\abs{\zt})-\abs{\zw})$, and
$\bcZ(x,\abs{z})=(\cz(t,\abs{\zt}),w)$.
Then, we can rewrite \eqref{eq:e-anf} as
  \begin{equation*}
   \minst[x,z]{f(x)}
    \bcE(x,\abs{z}) = 0,\quad
    \bcZ(x,\abs{z})-z = 0.
  \end{equation*}
\ifCmp
Hence, the following material is readily obtained by specializing
the definitions and results in the previous section.

\else
Hence, the following lemmas follow directly from results in the previous section.
\begin{lemma}[\TCone and Abs-Normal-Linearized Cone for \eqref{eq:e-anf}]\label{le:cones-e-anf}
\fi
With $\delta=(\delta t, \delta w, \delta\zt,\delta \zw)$,
\ifCmp \cref{def:cones-i-anf} and $w = \zw$ give \fi
the \tcone to $\Feabs$ at $(t, w,\zt, \zw)$ \ifCmp as \else reads \fi
\begin{equation*}
  \Teabs(t, w, \zt, \zw) =
  \begin{defarray}{\delta}
      \exists \tau_k \searrow 0, \
      \Feabs \ni (t_k, w_k, \zt_k, \zw_k) \to (t, w, \zt, \zw){:} \\
      \tau_k^{-1} (t_k - t, w_k - w, \zt_k - \zt)
      \to (\delta t, \delta w, \delta\zt), \
      \delta \zw = \delta w
    \end{defarray}
    ,
  \end{equation*}
and the abs-normal-linearized cone reads
  \begin{equation*}
    \Tlineabs(t, w, \zt, \zw) =
    \ifcase0
    \begin{defarray}{\delta}
      \partial_1 \ci(t, \abs\zt) \delta t +
      \partial_2 \ci(t, \abs\zt) \delta\zeta = \delta\omega, \\
      (\delta t, \delta\zt) \in \Tlinabs(t,\zt), \
      \delta\zw = \delta w
    \end{defarray},
    \or
    \begin{defarray}[r@{\medspace}l]{\delta}
      (\delta t, \delta\zt)&\in\Tlinabs(t,\zt), \\
      \partial_1 \ci(t, \abs\zt) \delta t +
      \partial_2 \ci(t, \abs\zt) \delta\zeta &= \delta\omega, \\
      \delta w&=\delta\zw
    \end{defarray},
    \fi
  \end{equation*}
where $\alpha=(\alpt,\alpw)$ and
\begin{equation*}
  \delta\zeta_i
  =
  \begin{ccases}
    \sigt_i(t) \delta\zt_i, & i\notin\alpt(t) \\
    \abs{\delta\zt_i}, & i\in\alpt(t)
  \end{ccases}
  ,
  \quad
  \delta\omega_i
  =
  \begin{ccases}
    \sigw_i(w) \delta\zw_i, & i\notin\alpw(w) \\
    \abs{\delta\zw_i}, & i\in\alpw(w)
  \end{ccases}
  .
\end{equation*}
\ifCmp
In \cref{def:branch-anf}, consider
\else
\end{lemma}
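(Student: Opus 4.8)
The plan is to exploit the reduction of \eqref{eq:e-anf} to the general form \eqref{eq:i-anf} already set up above, with $x=(t,w)$, $z=(\zt,\zw)$, $\bcE=(\ce,\ci-\abs{\zw})$ and $\bcZ=(\cz,w)$, and then to read off both cones from \cref{def:cones-i-anf} by a blockwise computation. The combined linearized switching term splits as $(\delta\zeta,\delta\omega)$ according to $z=(\zt,\zw)$, with $\delta\zeta$ and $\delta\omega$ exactly as stated. Note that the reformulated problem carries no explicit inequalities, so the active-inequality block of \cref{def:cones-i-anf} is void \emph{for it}; the original inequalities will reappear only through the $\ci-\abs{\zw}$ equation.

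For the \tcone I would argue directly from the defining sequences. The component $w-\zw$ of $\bcZ$ forces $w_k=\zw_k$ for every feasible $(t_k,w_k,\zt_k,\zw_k)$, hence $\tau_k^{-1}(\zw_k-\zw)=\tau_k^{-1}(w_k-w)$ along any sequence entering the tangent-cone construction. Passing to the limit yields $\delta\zw=\delta w$ and shows that the $\zw$-component is slaved to $\delta w$; it therefore suffices to track convergence of $(t_k,w_k,\zt_k)$, which is precisely the stated form of $\Teabs$.

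For the linearized cone I would linearize $\bcE$ and $\bcZ$ blockwise. The $\ce$- and $\cz$-blocks reproduce the first and third defining relations of $\Tlinabs(t,\zt)$ verbatim; the block $\ci-\abs{\zw}$ contributes $\partial_1\ci\,\delta t+\partial_2\ci\,\delta\zeta=\delta\omega$, the $\abs{\zw}$-term linearizing to $\delta\omega$ in the abs-normal sense; and the block $w-\zw$ contributes $\delta w=\delta\zw$. The one nontrivial point is to re-express the first, third, and active part of the second relation as the membership $(\delta t,\delta\zt)\in\Tlinabs(t,\zt)$, whose active block refers to the original active set $\setA(t)$. Here the key observation, and the main obstacle, is that feasibility of \eqref{eq:e-anf} gives $c_i(t,\abs{\zt})=\abs{\zw_i}$, so $c_i(t,\abs{\zt})=0$ iff $\zw_i=0$; that is, $\setA(t)$ coincides with the active slack switching set $\alpw(w)$. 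Consequently, for $i\in\setA(t)=\alpw(w)$ we have $\delta\omega_i=\abs{\delta\zw_i}\ge 0$, so the equality $\partial_1 c_i\,\delta t+\partial_2 c_i\,\delta\zeta=\delta\omega_i$ is equivalent to the linearized inequality $\partial_1 c_i\,\delta t+\partial_2 c_i\,\delta\zeta\ge 0$ of $\Tlinabs(t,\zt)$ together with a free choice of sign for $\delta\zw_i$, whereas for inactive $i$ the relation only fixes $\delta\zw_i$ via $\delta\omega_i=\sigw_i\delta\zw_i$ and imposes no sign condition, consistent with inactive inequalities being absent from $\Tlinabs(t,\zt)$. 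Collecting the blocks then yields the stated description of $\Tlineabs$.

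I would expect the sign bookkeeping in this last step—matching the two-sided equality over all $i$ against the one-sided active-inequality conditions in $\Tlinabs(t,\zt)$, and verifying that the free signs of the slack directions $\delta\zw_i$ make both inclusions go through—to be the only genuinely delicate part; the remaining identities are immediate specializations of \cref{def:cones-i-anf}.
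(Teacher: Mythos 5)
Your proposal is correct and follows essentially the same route as the paper, whose proof of this lemma is simply the remark that everything follows from \cref{def:cones-i-anf}, the definition of $\Tlinabs(t,\zt)$, and the feasibility identity $w=\zw$; you carry out that specialization explicitly and correctly identify the one genuinely delicate point, namely that for $i\in\setA(t)$ one has $\zw_i=0$, hence $\delta\omega_i=\abs{\delta\zw_i}\ge 0$, so the all-indices equality block $\partial_1\ci\,\delta t+\partial_2\ci\,\delta\zeta=\delta\omega$ subsumes the active-inequality conditions hidden in the membership $(\delta t,\delta\zt)\in\Tlinabs(t,\zt)$. No gaps.
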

\begin{proof}
 This follows from \cref{def:cones-i-anf}, the definition of $\Tlinabs(t,\zt)$, and $w=\zw$.
\end{proof}

\begin{lemma}[Branch NLPs for \eqref{eq:e-anf}]\label{def:branch-e-anf}
 Consider
\fi
 a feasible point $(\^t,\^w,\hzt,\hzw)$ of \eqref{eq:e-anf}.
 Choose $\sigt\in\{-1,1\}^{s_t}$ with $\sigt \succeq \sigt(\^t)$ and
 $\sigw\in\{-1,1\}^{m_2}$ with $\sigw \succeq \sigw(\^w)$.
 Set $\Sigt=\diag(\sigt)$ and $\Sigw=\diag(\sigw)$.
 Then, the branch problem NLP($\Sigtw$) for $\Sigtw \define \diag(\Sigt, \Sigw)$ reads
  \begin{align*}
    \sminst[t,w,\zt,\zw]{f(t)}
    & \ce(t,\Sigt \zt) = 0, \quad
     \ci(t,\Sigt \zt) - \Sigw \zw = 0,  \\
    & \cz(t,\Sigt \zt) - \zt = 0, \quad
     w - \zw = 0,\tag{NLP($\Sigtw$)}\label{eq:branch-e-anf}\\
     & \Sigt \zt \ge 0, \quad
     \Sigw \zw \ge 0.
  \end{align*}
  The feasible set of \eqref{eq:branch-e-anf},
  which always contains $(\^t,\^w,\hzt,\hzw)$, is denoted by $\Fesig$ and is a lifting of $\Fsig$.
\ifCmp
By \cref{def:cones-branch-anf}, the
\else
\end{lemma}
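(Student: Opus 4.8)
The plan is to obtain the whole statement by specializing \cref{def:branch-anf} to the reformulation of \eqref{eq:e-anf} as an instance of \eqref{eq:i-anf} that was just set up, namely with $x=(t,w)$, $z=(\zt,\zw)$, $\bcE(x,\abs z)=(\ce(t,\abs\zt),\,\ci(t,\abs\zt)-\abs\zw)$ and $\bcZ(x,\abs z)=(\cz(t,\abs\zt),\,w)$. First I would check that this reformulation is a legitimate (inequality-free) abs-normal NLP so that \cref{def:branch-anf} applies: $\partial_2\bcZ$ is strictly lower triangular because the $\zw$-block of $\bcZ$ equals $w$ and hence does not depend on $\abs z$, while the $\zt$-block inherits strict lower triangularity from $\partial_2\cz$. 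The implicitly defined switching variable is $z(\^x)=(\zt(\^t),\^w)$ since $\zw=w$, so $\sigma(\^x)=(\sigt(\^t),\sigw(\^w))$, and therefore the hypotheses $\sigt\succeq\sigt(\^t)$ and $\sigw\succeq\sigw(\^w)$ amount exactly to $\sigma\succeq\sigma(\^x)$ for the combined signature $\sigma=(\sigt,\sigw)$ with $\Sigma=\Sigtw=\diag(\Sigt,\Sigw)$.

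Next I would substitute into the branch problem of \cref{def:branch-anf}. As the reformulation carries no inequalities $\ci$, that branch problem reads $\min\ \_f(x)$ subject to $\bcE(x,\Sigma z)=0$, $\bcZ(x,\Sigma z)=z$, and $\Sigma z\ge0$. Using $\Sigma z=(\Sigt\zt,\Sigw\zw)$, the equation $\bcE(x,\Sigma z)=0$ splits into $\ce(t,\Sigt\zt)=0$ and $\ci(t,\Sigt\zt)-\Sigw\zw=0$; the equation $\bcZ(x,\Sigma z)=z$ splits into $\cz(t,\Sigt\zt)=\zt$ and $w=\zw$; and $\Sigma z\ge0$ splits into $\Sigt\zt\ge0$ and $\Sigw\zw\ge0$. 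This is precisely \eqref{eq:branch-e-anf}, which settles the first claim. Feasibility of the base point is then inherited from the general assertion in \cref{def:branch-anf} that $\Fsig$ always contains its reference point, applied to the reformulation; directly, $\sigt\succeq\sigt(\^t)$ gives $\Sigt\hzt=\abs{\hzt}$ and $\sigw\succeq\sigw(\^w)$ gives $\Sigw\hzw=\abs{\hzw}$, so the six constraints of \eqref{eq:branch-e-anf} collapse to the defining relations of $\Feabs$ at $(\^t,\^w,\hzt,\hzw)$ together with $\abs{\hzt},\abs{\hzw}\ge0$, all of which hold.

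For the lifting claim I would exhibit the projection $\pi(t,w,\zt,\zw)=(t,\zt)$ together with its branchwise inverse. Given $(t,\zt)\in\Fsig$, the relation $\Sigw\zw=\ci(t,\Sigt\zt)$ determines $\zw=\Sigw\ci(t,\Sigt\zt)$ uniquely (using $\Sigw^2=I$); setting $w=\zw$ yields $\Sigw\zw=\ci(t,\Sigt\zt)\ge0$ by feasibility of $(t,\zt)$ for \eqref{eq:branch-anf}, while the remaining constraints $\ce(t,\Sigt\zt)=0$, $\cz(t,\Sigt\zt)=\zt$, $\Sigt\zt\ge0$ coincide with those of \eqref{eq:branch-anf}, so the constructed point lies in $\Fesig$. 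Conversely, $\pi$ maps $\Fesig$ into $\Fsig$ because $\Sigw\zw=\ci(t,\Sigt\zt)$ with $\Sigw\zw\ge0$ forces $\ci(t,\Sigt\zt)\ge0$, recovering the inequality that the slack reformulation had absorbed. Hence $\pi$ restricts to a bijection $\Fesig\to\Fsig$, establishing the lifting. I expect the only genuinely delicate point to be the bookkeeping in this last step: unlike the global lifting $\Feabs$ of $\Fabs$, where the sign ambiguity of $w$ noted in the preceding remark prevents uniqueness, fixing the branch signature $\Sigw$ pins down the sign of each $w_i$ and thereby makes the correspondence one-to-one.
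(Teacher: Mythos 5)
Your proposal is correct and follows exactly the paper's route: the paper derives this lemma by specializing \cref{def:branch-anf} to the slack formulation \eqref{eq:e-anf} viewed as an instance of \eqref{eq:i-anf} with $x=(t,w)$, $z=(\zt,\zw)$, $\bcE$, $\bcZ$, which is precisely your argument. Your additional verifications (strict lower triangularity of $\partial_2\bcZ$, base-point feasibility, and the branchwise bijection $\pi\colon\Fesig\to\Fsig$ that makes the lifting claim precise once $\Sigw$ fixes the signs of $w$) merely spell out details the paper leaves implicit.
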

\begin{proof}
 This follows from \cref{def:branch-anf}.
\end{proof}

\begin{lemma}[\TCone and Linearized Cone for \eqref{eq:branch-e-anf}]\label{def:cones-branch-e-anf}
 Given \eqref{eq:branch-e-anf}, consider a feasible point $(t,w,\zt,\zw)$.
 The
 \fi
 \tcone to $\Fesig$ at $(t,w,\zt,\zw)$ reads
  \begin{equation*}
    \Tesig(t,w,\zt,\zw) =
    \begin{defarray}{\delta}
      \exists \tau_k \searrow 0,\ \Fesig \ni (t_k,w_k,\zt_k,\zw_k) \to (t,w,\zt,\zw){:} \\
      \tau_k^{-1} (t_k - t, w_k-w, \zt_k - \zt) \to (\delta t, \delta w, \delta\zt), \
      \delta \zw = \delta w
    \end{defarray}
  \end{equation*}
  with $\delta=(\delta t, \delta w, \delta\zt, \delta \zw)$.
  The linearized cone reads
  \begin{equation*}
    \Tlinesig(t,w,\zt,\zw) =
    \ifcase0
    \begin{defarray}{\delta}
      \partial_1 \ci \delta t +
      \partial_2 \ci \Sigt \delta\zt - \Sigw \delta\zw = 0, \
      \delta\zw = \delta w, \\
      (\delta t,\delta\zt) \in \Tlinsig(t,\zt), \
      \sigw_i \delta\zw_i \ge 0, \ i \in  \alpw(w)
    \end{defarray}
    \or
    \begin{defarray}[r@{\medspace}l]{\delta}
      \partial_1 \ci \delta t +
      \partial_2 \ci \Sigt \delta\zt - \Sigw \delta\zw &= 0, \\
      (\delta t,\delta\zt) \in \Tlinsig(t,\zt), \
      \delta\zw &= \delta w, \\
      \sigw_i \delta\zw_i \ge 0,\ i & \in  \alpw(w)
    \end{defarray}
    \or
    \begin{defarray}[r@{\medspace}l]{\delta}
      (\delta t,\delta\zt) &\in \Tlinsig(t,\zt), \\
      \partial_1 \ci \delta t +
      \partial_2 \ci \Sigt \delta\zt - \Sigw \delta\zw &= 0, \\
      \delta w &= \delta \zw, \\
      \sigw_i \delta\zw_i \ge 0,\ i & \in  \alpw(w)
    \end{defarray}
    \fi
    .
  \end{equation*}
  Here, all partial derivatives are evaluated at $(t, \Sigt\zt)$.
\ifCmp\else
\end{lemma}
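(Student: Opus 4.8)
The plan is to treat \eqref{eq:branch-e-anf} as an instance of the smooth branch problem \eqref{eq:branch-anf}, exploiting the identification of \eqref{eq:e-anf} as a special case of \eqref{eq:i-anf} recorded above, namely $x=(t,w)$, $z=(\zt,\zw)$ with the joint constraint functions $\bcE$ and $\bcZ$. Since the signature $\Sigtw=\diag(\Sigt,\Sigw)$ is fixed, \eqref{eq:branch-e-anf} is a smooth NLP, so both cones are supplied by the standard tangent-cone definition and by \cref{def:cones-branch-anf} applied to the lifted data. The whole argument is therefore a matter of substituting $\bcE$ and $\bcZ$ into those formulas and rewriting the outcome in terms of the cones already available for the inequality formulation; note also that $(t,\zt)$ is feasible for \eqref{eq:branch-anf} because $(t,w,\zt,\zw)\in\Fesig$ lifts $(t,\zt)\in\Fsig$.

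For the tangent cone $\Tesig$ I would simply specialize the limiting-direction definition to $\Fesig$. The only observation needed is that every $(t_k,w_k,\zt_k,\zw_k)\in\Fesig$ satisfies $w_k=\zw_k$, so along any feasible sequence the difference quotients for $w$ and $\zw$ coincide; passing to the limit gives $\delta\zw=\delta w$, which is exactly the extra condition in the stated formula. This part is immediate.

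The substantive part is the linearized cone. Applying \cref{def:cones-branch-anf} to the lifted problem produces the linearizations of the four equality blocks ($\ce$, $\ci-\Sigw\zw$, $\cz-\zt$, and $w-\zw$) together with the sign conditions $\sigt_i\delta\zt_i\ge0$ for $i\in\alpt(t)$ and $\sigw_i\delta\zw_i\ge0$ for $i\in\alpw(w)$; the lifted problem carries no genuine inequality constraints beyond the branch sign conditions, so no active-inequality row $\partial_1\cA\,\delta t+\partial_2\cA\,\Sigt\delta\zt\ge0$ appears directly. To bring this into the stated form, which packages $(\delta t,\delta\zt)$ into $\Tlinsig(t,\zt)$, I would show that this missing inequality row is redundant. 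The key correspondence is that an inequality $i\in\setI$ is active for \eqref{eq:i-anf} at the point iff $c_i(t,\abs{\zt})=\abs{w_i}=0$, i.e.\ iff $i\in\alpw(w)$, so that $\setA$ matches $\alpw(w)$ under the lifting. For such $i$, the $i$-th linearized equality $\partial_1 c_i\,\delta t+\partial_2 c_i\,\Sigt\delta\zt=\sigw_i\delta\zw_i$ combined with $\sigw_i\delta\zw_i\ge0$ yields $\partial_1 c_i\,\delta t+\partial_2 c_i\,\Sigt\delta\zt\ge0$. Adjoining these (implied) inequalities and recognizing the $\ce$, $\cz$, and $\sigt$ rows as precisely the defining relations of $\Tlinsig(t,\zt)$ turns the direct description into the stated one; conversely every row of the stated form is among the direct relations, so the two coincide.

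The main obstacle is exactly this reconciliation step: the equality block $\ci-\Sigw\zw=0$ of the slack formulation must be shown to carry the same information as the active-inequality block of \eqref{eq:branch-anf}. The verification hinges on the active-set identification $\setA\leftrightarrow\alpw(w)$ and on the sign conditions $\sigw_i\delta\zw_i\ge0$ supplying the correct inequality direction; once these are in place the equivalence of the two cone descriptions is routine, and the closing remark that all derivatives are evaluated at $(t,\Sigt\zt)$ follows since $\abs{\zt}=\Sigt\zt$ on the branch.
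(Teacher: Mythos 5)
Your proposal is correct and follows the same route as the paper, which simply specializes \cref{def:cones-branch-anf} to the lifted data $x=(t,w)$, $z=(\zt,\zw)$ with $\bcE$ and $\bcZ$. You additionally spell out the one detail the paper leaves implicit, namely that the active-inequality block of $\Tlinsig(t,\zt)$ is recovered from the equality row $\partial_1 c_i\,\delta t+\partial_2 c_i\,\Sigt\delta\zt=\sigw_i\delta\zw_i$ together with $\sigw_i\delta\zw_i\ge0$ via the identification $\setA(t)=\alpw(w)$, which is exactly the right reconciliation.
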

\begin{proof}
 This follows from \cref{def:cones-branch-anf}.
\end{proof}
\fi

Moreover, we obtain the following decompositions by applying
\cref{le:decomp-cones-anf} to \eqref{eq:e-anf}
at $y=(t,w,\zt,\zw)$ with associated branch problems $\eqref{eq:branch-e-anf}$:
\begin{equation*}
  \Teabs(y) = \bigcup_{\Sigtw} \Tesig(y) \qtextq{and} \Tlineabs(y) = \bigcup_{\Sigtw} \Tlinesig(y).
\end{equation*}
As before, the \tcone is a subset of the linearized cone
and the reverse inclusion holds for the dual cones:
\begin{equation*}
  \Teabs(y)
  \subseteq
  \Tlineabs(y)
   \qtextq{and}
  \Teabs(y)^*
  \supseteq
  \Tlineabs(y)^*.
\end{equation*}
This follows directly by applying \cref{le:i-cones} to \eqref{eq:e-anf}.
Again, equality does not hold in general,
and we consider Abadie's Kink Qualification (AKQ)
and Guignard's Kink Qualification (GKQ) for \eqref{eq:e-anf}.

\ifCmp
Given a feasible point $y = (t,w,\zt(t),\zw(w))$ of \eqref{eq:e-anf},
\cref{def:akq} gives AKQ and GKQ at $(t,w)$, respectively, as
\begin{equation*}
  \Teabs(y) = \Tlineabs(y)
  \qtextq{and}
  \Teabs(y)^* = \Tlineabs(y)^*.
\end{equation*}
\else
\begin{lemma}[AKQ for \eqref{eq:e-anf}]
Consider a feasible point $(t,w,\zt(t),\zw(w))$ of \eqref{eq:e-anf}.
Then, AKQ for \eqref{eq:e-anf} at $(t,w)$ reads
\begin{equation*}
  \Teabs(t,w,\zt(t),\zw(w))=\Tlineabs(t,w,\zt(t),\zw(w)).
\end{equation*}
\end{lemma}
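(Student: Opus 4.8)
The plan is to read off the claim as a direct specialization of \cref{def:akq} to the reformulated problem \eqref{eq:e-anf}. Recall that \eqref{eq:e-anf} was already exhibited above as an instance of \eqref{eq:i-anf} under the identification $x=(t,w)$, $z=(\zt,\zw)$, $\bcE(x,\abs{z})=(\ce(t,\abs{\zt}),\ci(t,\abs{\zt})-\abs{\zw})$, and $\bcZ(x,\abs{z})=(\cz(t,\abs{\zt}),w)$. Under this identification the feasible set $\Fabs$ of the general problem becomes $\Feabs$, and the implicitly defined switching variable is $z(x)=(\zt(t),\zw(w))$, so that a feasible point is completely determined by the pair $(t,w)$.

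First I would invoke \cref{def:akq} for the general problem \eqref{eq:i-anf}: AKQ holds at $x$ exactly when $\Tabs=\Tlinabs$ at the corresponding feasible point $(x,z(x))$. Substituting $x=(t,w)$ and $z(x)=(\zt(t),\zw(w))$ and renaming the two cones according to the reformulation, this identity turns into $\Teabs(t,w,\zt(t),\zw(w))=\Tlineabs(t,w,\zt(t),\zw(w))$, which is precisely the assertion. The explicit shapes of the two cones entering here are the ones already computed in \cref{le:cones-e-anf} from \cref{def:cones-i-anf} together with the constraint $w=\zw$, so no new cone computation is required and the statement is a purely definitional unfolding.

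I do not expect any genuine obstacle, since the content is definitional; the only point deserving care is the one flagged in the remark preceding \eqref{eq:e-anf}, namely that the slack $w$ is \emph{not} uniquely determined by the original inequality data. Consequently AKQ must be phrased as a property of the concrete feasible point $(t,w,\zt(t),\zw(w))$ --- equivalently of the pair $(t,w)$ --- rather than of $t$ alone, because different admissible sign patterns of the nonzero components of $w$ yield different active index sets $\alpw(w)$ and hence different linearized cones through the $\delta\omega$ terms. Making this dependence on $(t,w)$ explicit is exactly what the lemma records, and once the point is fixed the equality of cones is nothing more than the instantiated definition of AKQ.
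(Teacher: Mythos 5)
Your proposal is correct and matches the paper's own treatment, which likewise disposes of this lemma as a direct instantiation of \cref{def:akq} under the identification $x=(t,w)$, $z=(\zt,\zw)$. Your additional observation that the statement must be anchored at the pair $(t,w)$ because of the sign ambiguity in the slack is exactly the point the paper records separately in \cref{rem:independece}, so nothing is missing.
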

\begin{proof}
This follows from \cref{def:akq}.
\end{proof}

\begin{lemma}[GKQ for \eqref{eq:e-anf}]
Consider a feasible point $(t,w,\zt(t),\zw(w))$ of \eqref{eq:e-anf}.
Then, GKQ for \eqref{eq:e-anf} at $(t,w)$ reads
$\Teabs(t,w,\zt(t),\zw(w))^*=\Tlineabs(t,w,\zt(t),\zw(w))^*$.
\end{lemma}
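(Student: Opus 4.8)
The plan is to recognise that this statement is not a theorem requiring an argument in the usual sense, but a \emph{definitional specialisation}: Guignard's Kink Qualification for \eqref{eq:e-anf} is obtained by applying the general GKQ of \cref{def:gkq} to the recasting of \eqref{eq:e-anf} as an instance of \eqref{eq:i-anf}. Recall that this recasting has already been carried out explicitly by setting $x=(t,w)$, $z=(\zt,\zw)$, and grouping the constraint functions accordingly. Under this identification the feasible set $\Feabs$ plays the role of $\Fabs$, the tangent cone $\Teabs$ plays the role of $\Tabs$, and the abs-normal-linearized cone $\Tlineabs$ plays the role of $\Tlinabs$. These correspondences are exactly what the preceding lemma characterising $\Teabs$ and $\Tlineabs$ (cf. \cref{le:cones-e-anf}) records.

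With these correspondences in hand, the proof is a single invocation of \cref{def:gkq}: GKQ holds at $(t,w)$ precisely when the dual of the tangent cone equals the dual of the abs-normal-linearized cone. Substituting the specialised cones for \eqref{eq:e-anf} then yields $\Teabs(t,w,\zt(t),\zw(w))^*=\Tlineabs(t,w,\zt(t),\zw(w))^*$, which is the asserted form. This mirrors verbatim the one-line argument already used for AKQ via \cref{def:akq}, the only change being that dualisation $(\cdot)^*$ is applied to each cone before comparison.

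I expect \textbf{no genuine obstacle} here, and that is worth stating plainly: the content is purely notational, confirming that the objects named $\Teabs$ and $\Tlineabs$ in the statement are literally the specialisations of $\Tabs$ and $\Tlinabs$ rather than merely analogous constructions, which follows because those cones were derived from \cref{def:cones-i-anf} under the very same substitution. In particular, the subtle difficulties flagged in the accompanying remark — non-uniqueness of the slack $w$ and the failure to obtain compatibility of Guignard-type qualifications across the two formulations — do \emph{not} enter this lemma at all. They concern the later comparison between GKQ for \eqref{eq:i-anf} and GKQ for \eqref{eq:e-anf}, whereas the present statement only fixes what GKQ \emph{means} for \eqref{eq:e-anf}; no limiting argument, estimate, or dual-cone computation beyond the formal dualisation is required.
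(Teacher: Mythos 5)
Your proposal is correct and matches the paper exactly: the paper's entire proof is ``This follows from \cref{def:gkq},'' i.e.\ the same definitional specialisation via the identification $x=(t,w)$, $z=(\zt,\zw)$ that you describe. Your added observation that the non-uniqueness of $w$ does not affect the statement is handled by the paper in a separate remark (\cref{rem:independece}) rather than in this proof, but it is consistent with the paper's treatment.
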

\begin{proof}
This follows from \cref{def:gkq}.
\end{proof}
\fi

\begin{remark}\label{rem:independece}
The possible slack values
$w \in W(t) \define \defset{w}{\abs{w} = \ci(t,\abs{\zt(t)})}$
just differ by the signs of components $w_i$ for $i \in \setA(t)$.
Thus, neither AKQ nor GKQ depends on the particular choice of $w$,
and both conditions are well-defined for $\eqref{eq:e-anf}$.
\end{remark}

\ifCmp
Now \cref{th:branch-acq_akq} takes the following form.
\begin{theorem}
  [ACQ/GCQ for all \eqref{eq:branch-e-anf} implies AKQ/GKQ for \eqref{eq:e-anf}]
  \label{th:e-branch-acq_akq}
  \label{th:e-branch-gcq_gkq}
  Consider a feasible point $y = (t,w,\zt(t),\zw(w))$ of \eqref{eq:e-anf}
  with associated branch problems \eqref{eq:branch-e-anf}.
  Then, AKQ respectively GKQ for \eqref{eq:e-anf} holds at $(t,w)$ if
  ACQ respectively GCQ holds for all \eqref{eq:branch-e-anf} at $y$.
\end{theorem}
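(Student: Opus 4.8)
The plan is to recognize the claim as a direct specialization of \cref{th:branch-acq_akq}/\cref{th:branch-gcq_gkq} to the instance of \eqref{eq:i-anf} obtained from \eqref{eq:e-anf}. Using the substitutions $x=(t,w)$, $z=(\zt,\zw)$, $\bcE$, and $\bcZ$ introduced above, the feasible set $\Feabs$ becomes an instance of $\Fabs$, the branch sets $\Fesig$ become instances of $\Fsig$, and the cones $\Teabs$, $\Tlineabs$, $\Tesig$, $\Tlinesig$ coincide with $\Tabs$, $\Tlinabs$, $\Tsig$, $\Tlinsig$. Under this identification, AKQ respectively GKQ for \eqref{eq:e-anf} at $(t,w)$ is exactly AKQ respectively GKQ for the corresponding \eqref{eq:i-anf}, while ACQ respectively GCQ for every \eqref{eq:branch-e-anf} is ACQ respectively GCQ for every \eqref{eq:branch-anf}; invoking \cref{th:branch-acq_akq}/\cref{th:branch-gcq_gkq} then closes the argument.

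For a self-contained proof I would instead argue directly from the two decompositions already displayed, namely $\Teabs(y)=\bigcup_{\Sigtw}\Tesig(y)$ and $\Tlineabs(y)=\bigcup_{\Sigtw}\Tlinesig(y)$. For the AKQ part, ACQ for each branch problem means $\Tesig(y)=\Tlinesig(y)$ for every $\Sigtw$; since the family of signatures is finite, forming the union over all $\Sigtw$ yields $\Teabs(y)=\Tlineabs(y)$, which is AKQ for \eqref{eq:e-anf}. Well-definedness with respect to the non-unique slack $w$ is guaranteed by \cref{rem:independece}, so the particular choice of $w$ plays no role.

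For the GKQ part I would dualize both decompositions using the identity $\bigl(\bigcup_i C_i\bigr)^* = \bigcap_i C_i^*$, which gives
\[
  \Teabs(y)^* = \bigcap_{\Sigtw}\Tesig(y)^*, \qquad \Tlineabs(y)^* = \bigcap_{\Sigtw}\Tlinesig(y)^*.
\]
Since GCQ for each branch problem gives $\Tesig(y)^* = \Tlinesig(y)^*$ for every $\Sigtw$, intersecting over the finite family of signatures produces $\Teabs(y)^* = \Tlineabs(y)^*$, i.e.\ GKQ for \eqref{eq:e-anf}. The only step requiring care — and the mild obstacle here — is precisely this dualization of a union: one must note that the polar of a union of cones is the intersection of the polars, and that finiteness of the signature family $\{\Sigtw\}$ is what legitimizes interchanging the union with the cone construction, exactly as in the fourth equality of \cref{le:decomp-cones-anf}. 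Everything else is a routine transfer of the inequality-constrained results to the lifted equality-constrained setting.
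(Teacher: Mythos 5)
Your proposal is correct and matches the paper's own argument, which simply invokes \cref{th:branch-acq_akq} and \cref{th:branch-gcq_gkq} after viewing \eqref{eq:e-anf} as the instance of \eqref{eq:i-anf} given by $x=(t,w)$, $z=(\zt,\zw)$, $\bcE$, $\bcZ$; your self-contained second version merely unrolls that same chain, since \cref{th:branch-acq_akq} itself rests on the decomposition of \cref{le:decomp-cones-anf} and the identity $\bigl(\bigcup_i C_i\bigr)^*=\bigcap_i C_i^*$.
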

\else
As before, AKQ or GKQ are implied if ACQ or GCQ hold for all branch problems.
\begin{theorem}[ACQ for all \eqref{eq:branch-e-anf} implies AKQ for \eqref{eq:e-anf}]\label{th:e-branch-acq_akq}
 Consider a feasible point $(t,w,\zt(t),\zw(w))$ of \eqref{eq:e-anf} with associated branch problems \eqref{eq:branch-e-anf}.
 Then, AKQ for \eqref{eq:e-anf} holds at $(t,w)$ if ACQ holds for all \eqref{eq:branch-e-anf} at $(t,w,\zt(t),\zw(w))$.
\end{theorem}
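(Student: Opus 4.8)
The plan is to mirror the proof of \cref{th:branch-acq_akq} verbatim, exploiting the observation made above that \eqref{eq:e-anf} is a special case of \eqref{eq:i-anf}. Because of this, all structural results of \cref{sec:anf-inequalities} transfer to the slack formulation; in particular the cone decompositions of \cref{le:decomp-cones-anf} apply at the feasible point $y = (t,w,\zt(t),\zw(w))$, and we have already recorded them in the form $\Teabs(y) = \bigcup_{\Sigtw} \Tesig(y)$ and $\Tlineabs(y) = \bigcup_{\Sigtw} \Tlinesig(y)$, the union running over all admissible signatures $\Sigtw = \diag(\Sigt,\Sigw)$.

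First I would restate the hypothesis: ACQ for a single branch problem \eqref{eq:branch-e-anf} at $y$ is by definition the equality $\Tesig(y) = \Tlinesig(y)$. Assuming this for every $\Sigtw$, I would substitute into the two decompositions and take the union, obtaining
\begin{equation*}
  \Teabs(y)
  = \bigcup_{\Sigtw} \Tesig(y)
  = \bigcup_{\Sigtw} \Tlinesig(y)
  = \Tlineabs(y),
\end{equation*}
which is exactly AKQ for \eqref{eq:e-anf} at $(t,w)$. The corresponding GKQ statement follows by the same argument after dualization, using that the dual cone of a finite union is the intersection of the dual cones, precisely as in \cref{th:branch-gcq_gkq}.

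I expect no genuine analytic obstacle: the entire content resides in \cref{le:decomp-cones-anf} together with the finiteness of the index set $\{\Sigtw\}$, which is what permits commuting the \tcone with the union. The only point deserving explicit comment is well-definedness with respect to the slack. The chosen feasible point fixes a particular $w \in W(t)$, but admissible changes of the signs of the active components $w_i$, $i \in \setA(t)$, alter neither $\Teabs(y)$ nor $\Tlineabs(y)$ by \cref{rem:independece}; hence both the hypothesis and the conclusion are independent of the representative $w$, and the statement at $(t,w)$ is unambiguous.
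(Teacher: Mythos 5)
Your proposal is correct and follows the same route as the paper, which obtains this result by specializing \cref{th:branch-acq_akq} to the slack formulation viewed as an instance of \eqref{eq:i-anf}; the union-over-signatures argument you spell out is exactly the content of \cref{le:decomp-cones-anf} applied to \eqref{eq:e-anf}. Your remark on independence of the representative $w\in W(t)$ via \cref{rem:independece} matches the paper's treatment as well.
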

\begin{proof}
This follows from \cref{th:branch-acq_akq}.
\end{proof}

\begin{theorem}[GCQ for all \eqref{eq:branch-e-anf} implies GKQ for \eqref{eq:e-anf}]\label{th:e-branch-gcq_gkq}
 Consider a feasible point $(t,w,\zt(t),\zw(w))$ of \eqref{eq:e-anf} with associated branch problems \eqref{eq:branch-e-anf}.
 Then, GKQ for \eqref{eq:e-anf} holds at $(t,w)$ if GCQ holds for all \eqref{eq:branch-e-anf} at $(t,w,\zt(t),\zw(w))$.
\end{theorem}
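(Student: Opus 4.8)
The plan is to reduce this statement to \cref{th:branch-gcq_gkq}, exploiting the fact already recorded above that \eqref{eq:e-anf} is the special case of \eqref{eq:i-anf} obtained via $x=(t,w)$, $z=(\zt,\zw)$, $\bcE(x,\abs{z})=(\ce(t,\abs{\zt}),\ci(t,\abs{\zt})-\abs{\zw})$ and $\bcZ(x,\abs{z})=(\cz(t,\abs{\zt}),w)$. First I would verify that the branch problems of this special-case \eqref{eq:i-anf} for the signature $\Sigtw=\diag(\Sigt,\Sigw)$ coincide exactly with \eqref{eq:branch-e-anf}: replacing $\abs{z}$ by $\Sigtw z$ in $\bcE$ and $\bcZ$ and appending $\Sigtw z\ge 0$ reproduces the equalities $\ce(t,\Sigt\zt)=0$, $\ci(t,\Sigt\zt)-\Sigw\zw=0$, $\cz(t,\Sigt\zt)-\zt=0$, $w-\zw=0$ together with $\Sigt\zt\ge 0$ and $\Sigw\zw\ge 0$. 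Hence the hypothesis ``GCQ holds for all \eqref{eq:branch-e-anf}'' is precisely ``GCQ holds for all branch problems of the special-case \eqref{eq:i-anf},'' and \cref{th:branch-gcq_gkq} applies verbatim, yielding GKQ for \eqref{eq:e-anf} at $(t,w)$.

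For completeness I would also spell out the underlying mechanism, namely the dualization of the cone decompositions obtained from \cref{le:decomp-cones-anf} applied at $y=(t,w,\zt,\zw)$, that is $\Teabs(y)=\bigcup_{\Sigtw}\Tesig(y)$ and $\Tlineabs(y)=\bigcup_{\Sigtw}\Tlinesig(y)$. Using the elementary identity $(\bigcup_i C_i)^*=\bigcap_i C_i^*$ for dual cones, these pass to $\Teabs(y)^*=\bigcap_{\Sigtw}\Tesig(y)^*$ and $\Tlineabs(y)^*=\bigcap_{\Sigtw}\Tlinesig(y)^*$. By assumption each smooth branch problem satisfies GCQ, i.e.\ $\Tesig(y)^*=\Tlinesig(y)^*$ for every $\Sigtw$, so intersecting over the finitely many admissible signatures gives $\Teabs(y)^*=\Tlineabs(y)^*$, which is exactly GKQ for \eqref{eq:e-anf}. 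Note that the union-to-intersection step needs no finiteness, while finiteness was already consumed in \cref{le:decomp-cones-anf} to establish the tangent cone decomposition itself.

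Two points deserve care, though neither is a genuine obstacle. First, the slack $w$ is not uniquely determined by $t$, so a priori the conclusion could depend on the chosen feasible point; this is settled by \cref{rem:independece}, which shows GKQ is independent of the particular admissible $w$, making the statement well posed. Second, one must confirm that GCQ for the smooth branch NLPs \eqref{eq:branch-e-anf} is the classical Guignard condition $\Tesig(y)^*=\Tlinesig(y)^*$ in the lifted variables $(t,w,\zt,\zw)$, which reads off directly from the cone descriptions recorded for \eqref{eq:branch-e-anf}. The genuinely hard questions in this line of work---whether GCQ is \emph{preserved} between the \eqref{eq:i-anf} and \eqref{eq:e-anf} formulations---lie entirely outside this theorem, which concerns only the easy branch-to-kink direction within the single formulation \eqref{eq:e-anf}.
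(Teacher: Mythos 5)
Your proposal is correct and follows essentially the same route as the paper: the paper's proof is precisely the specialization of \cref{th:branch-gcq_gkq} to the slack formulation viewed as a special case of \eqref{eq:i-anf}, which in turn rests on dualizing the cone decompositions of \cref{le:decomp-cones-anf} exactly as you spell out. Your additional remarks on the union-to-intersection identity for dual cones and on well-posedness with respect to the choice of $w$ via \cref{rem:independece} match the paper's treatment.
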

\begin{proof}
This follows from \cref{th:branch-gcq_gkq}.
\end{proof}
\fi

\subsection{Relations of Kink Qualifications for Abs-Normal NLPs}

In this paragraph we discuss the relations of kink qualifications for the two different formulations introduced above.
Here, equality of the cones and of the dual cones just needs to be considered for one element of the set $W(t) = \defset{w}{\abs{w}=\ci(t,\abs{\zt(t)})}$.
Then, it holds directly for all other elements by \cref{rem:independece}.

\begin{theorem}\label{th:akq}
 AKQ for \eqref{eq:i-anf} holds at $(t,\zt(t))\in\Fabs$ if and only if
 AKQ for \eqref{eq:e-anf} holds at $(t,w,\zt(t),\zw(w))\in\Feabs$
 for any (and hence all) $w\in W(t)$.
\end{theorem}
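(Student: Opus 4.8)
The plan is to reduce both kink qualifications to their one nontrivial inclusion and then exploit that \eqref{eq:e-anf} is merely a lifting of \eqref{eq:i-anf}. By \cref{le:i-cones}, applied to both formulations, the inclusions $\Tabs \subseteq \Tlinabs$ and $\Teabs \subseteq \Tlineabs$ always hold, so AKQ for \eqref{eq:i-anf} amounts to $\Tlinabs(t,\zt(t)) \subseteq \Tabs(t,\zt(t))$ and AKQ for \eqref{eq:e-anf} to $\Tlineabs(y) \subseteq \Teabs(y)$, where $y=(t,w,\zt(t),\zw(w))$. The backbone of the argument is the projection $\pi(\delta t,\delta w,\delta\zt,\delta\zw) \define (\delta t,\delta\zt)$, together with the observation that, since $\abs{\zw} = \ci(t,\abs{\zt(t)})$ and $w=\zw$, the active switching set of the slack coincides with the active inequality set, $\alpw(w) = \setA(t)$.

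First I would record how $\pi$ acts on the two cones of \eqref{eq:e-anf}. Reading off the explicit form of $\Tlineabs$ stated above, a direction $\delta$ lies in $\Tlineabs(y)$ iff $(\delta t,\delta\zt) \in \Tlinabs(t,\zt)$, $\delta w = \delta\zw$, and $\partial_1\ci\,\delta t + \partial_2\ci\,\delta\zeta = \delta\omega$ componentwise. For an inactive index $i \notin \setA(t)$ this reads $\partial_1 c_i\,\delta t + \partial_2 c_i\,\delta\zeta = \sigw_i(w)\,\delta\zw_i$, which for given $(\delta t,\delta\zt)$ determines $\delta\zw_i = \delta w_i$ uniquely; for an active index $i \in \setA(t) = \alpw(w)$ it reads $\partial_1 c_i\,\delta t + \partial_2 c_i\,\delta\zeta = \abs{\delta\zw_i}$, which is solvable for $\delta\zw_i = \delta w_i$ iff its left-hand side is nonnegative, leaving the sign of $\delta w_i$ free (unique only if that value is $0$). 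Consequently $\pi$ maps $\Tlineabs(y)$ \emph{onto} $\Tlinabs(t,\zt)$, with finite fibers given by the admissible sign choices at active indices. The same $\pi$ evidently maps $\Teabs(y)$ \emph{into} $\Tabs(t,\zt)$, since projecting a feasible sequence of \eqref{eq:e-anf} yields a feasible sequence of \eqref{eq:i-anf} with the same scaled limits.

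With these structural facts the equivalence follows by chasing the nontrivial inclusion through $\pi$. For ``$\Leftarrow$'', given $(\delta t,\delta\zt) \in \Tlinabs(t,\zt)$ I would lift it to some $\delta \in \Tlineabs(y)$ (possible by surjectivity of $\pi$ on the linearized cones), apply AKQ for \eqref{eq:e-anf} to obtain $\delta \in \Teabs(y)$, and project back to get $(\delta t,\delta\zt) = \pi(\delta) \in \Tabs(t,\zt)$. For ``$\Rightarrow$'', given $\delta \in \Tlineabs(y)$ I would first project to $(\delta t,\delta\zt) \in \Tlinabs(t,\zt) = \Tabs(t,\zt)$ by AKQ for \eqref{eq:i-anf}, pick a sequence $\Fabs \ni (t_k,\zt_k) \to (t,\zt)$ realizing this direction on a fixed branch (so that $\tau_k^{-1}(\abs{\zt_k}-\abs{\zt}) \to \delta\zeta$ as well, using \cref{le:decomp-cones-anf}), and lift it by setting $\abs{\zw_k} \define \ci(t_k,\abs{\zt_k})$ and $w_k \define \zw_k$, choosing the sign of each $\zw_{k,i}$ so that the scaled slack differences converge to the prescribed $\delta w_i = \delta\zw_i$.

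The main obstacle is precisely this sign selection at the active indices. There $\zw_i = 0$, so the scaled limit of $\tau_k^{-1}\zw_{k,i}$ has free sign but magnitude forced to $\tau_k^{-1}\ci(t_k,\abs{\zt_k}) \to \partial_1 c_i\,\delta t + \partial_2 c_i\,\delta\zeta$; membership $\delta \in \Tlineabs(y)$ guarantees exactly that this limit equals $\abs{\delta w_i}$, so choosing $\zw_{k,i}$ with sign $\sign(\delta w_i)$ gives $\tau_k^{-1}\zw_{k,i} \to \delta w_i$, while at inactive indices the sign is fixed by continuity and automatically produces $\delta w_i = \sigw_i(w)(\partial_1 c_i\,\delta t + \partial_2 c_i\,\delta\zeta)$. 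The resulting sequence is feasible for \eqref{eq:e-anf} with scaled differences converging to $\delta$, hence $\delta \in \Teabs(y)$. Finally, independence of the whole construction from the chosen representative $w \in W(t)$ is \cref{rem:independece}, which also justifies the ``for any (and hence all)'' clause.
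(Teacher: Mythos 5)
Your proposal is correct and follows essentially the same route as the paper's proof: both directions are handled by projecting/lifting directions and realizing sequences, with the crux being the construction $\zw_k=\pm\ci(t_k,\abs{\zt_k})$ where the slack signs at active indices are chosen to match $\sign(\delta w_i)$ and at inactive indices are fixed by continuity, and with the nonnegativity of $\partial_1 c_i\,\delta t+\partial_2 c_i\,\delta\zeta$ at active indices (from the $\cA$-inequality in $\Tlinabs$) guaranteeing solvability of $\abs{\delta\zw_i}=\delta\omega_i$. Your explicit framing via the projection $\pi$ and the use of \cref{le:decomp-cones-anf} to fix a branch (rather than the paper's direct componentwise argument for $\tau_k^{-1}(\abs{\zt_k}-\abs{\zt})\to\delta\zeta$) are only presentational differences.
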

\begin{proof}
 As $\Tabs(t,\zt)\subseteq \Tlinabs(t,\zt)$ and
 $\Teabs(t,\zt)\subseteq \Tlineabs(t,\zt)$
 always hold, we just need to prove
 \begin{equation*}
   \Tabs(t, \zt) \supseteq \Tlinabs(t, \zt)
   \iff
   \Teabs(t, w, \zt, \zw) \supseteq \Tlineabs(t, w, \zt, \zw).
 \end{equation*}
 We start with the implication ``$\Rightarrow$''. Let $\delta=(\delta t,\delta w,\delta \zt, \delta\zw)\in \Tlineabs(t,w, \zt, \zw)$.
 Then, we have $\tilde\delta=(\delta t,\delta \zt)\in \Tlinabs(t,\zt)=\Tabs(t,\zt)$.
 Hence, there exist sequences $(t_k,\zt_k)\in\Fabs$ and $\tau_k\searrow 0$ with $(t_k,\zt_k)\to(t,\zt)$ and $\tau_k^{-1}(t_k-t, \zt_k-\zt)\to (\delta t,\delta\zt)$.
 Now, define
 \begin{equation*}
  \Sigw=\diag(\sigma) \qtextq{with} \sigma_i=\begin{cases} \sigw(w_i), & i\notin\alpw(w), \\ \sign(\delta\zw_i), & i\in\alpw(w), \end{cases}
 \end{equation*}
 and set $\zw_k\define w_k\define\Sigw\ci(t_k,\abs{\zt_k})$.
 Then, we have $\zw=w= \Sigw\ci(t,\abs{\zt})$ and obtain
 \begin{align*}
  \zw_k-\zw
  &=
  \Sigw [\ci(t_k,\abs{\zt_k})-\ci(t,\abs{\zt})] \\
  &=
  \Sigw [
    \partial_1 \ci(t,\abs{\zt})(t_k-t) +
    \partial_2 \ci(t,\abs{\zt})(\abs{\zt_k}-\abs{\zt}) +
    o(\norm{(t_k-t, \abs{\zt_k}-\abs{\zt})})
  ].
 \end{align*}
 Further, for $k$ large enough we have
 $\abs{\zt_k}-\abs{\zt}=\Sigt_k\zt_k-\Sigt\zt$ using
 $\Sigt_k=\diag(\sigt_k)$ with $\sigt_k=\sigma(t_k)$ and $\Sigt=\diag(\sigt)$ with $\sigt=\sigma(t)$.
 Then, we obtain for $\zt_i\neq 0$
 \begin{equation*}
  \tau_k^{-1}(\abs{(\zt_k)_i}-\abs{\zt_i}) = \tau_k^{-1}\sigt_i ((\zt_k)_i-\zt_i)\to \sigt_i\delta\zt_i.
 \end{equation*}
 For $\zt_i= 0$ we have $\tau_k^{-1}(\zt_k)_i \to \delta\zt_i$ and hence
 \begin{equation*}
  \tau_k^{-1}(\abs{(\zt_k)_i}-\abs{\zt_i}) =
  \tau_k^{-1}\abs{(\zt_k)_i} \to \abs{\delta\zt_i}.
 \end{equation*}
 Thus,
 $ \tau_k^{-1}(\abs{(\zt_k)}-\abs{\zt}) \to \delta\zeta$
 holds, and in total
 \begin{equation*}
  \tau_k^{-1}(\zw_k-\zw)\to \Sigw [\partial_1 \ci(t,\abs{\zt})\delta t + \partial_2 \ci(t,\abs{\zt})\delta \zeta]=\Sigw\delta\zeta=\delta\zw.
 \end{equation*}
 Additionally, we obtain $\tau_k^{-1}(w_k-w)\to \delta w$ and finally $d\in \Teabs(t,w, \zt, \zw)$.
 To prove the implication ``$\Leftarrow$'',
 consider $\delta=(\delta t,\delta \zt)\in \Tlinabs(t,\zt)$. We define
 \begin{equation*}
  \Sigw=\diag(\sigma) \qtextq{with} \sigma_i=\begin{cases} \pm 1, & i\in\setA(t), \\ \sign([\partial_1\ci(t,\abs{\zt})\delta t+\partial_2\ci(t,\abs{\zt})\delta\zeta]_i), & i\notin\setA(t), \end{cases}
 \end{equation*}
 and set $\delta w=\delta\zw=\Sigw [\partial_1\ci(t,\abs{\zt})\delta t+\partial_2\ci(t,\abs{\zt})\delta\zeta]$.
 Then\ifnum\FmtChoice=0,\else\ we have \fi $\tilde\delta=(\delta t,\delta w,\delta\zt,\delta\zw)\in\Tlineabs(t,w,\zt,\zw)$ for $w=\zw=\Sigw\ci(t,\abs{\zt})$.
 By assumption, $\tilde\delta\in\Teabs(t,w,\zt,\zw)$ holds,
 and this directly implies $\delta=(\delta t,\delta\zt)\in\Tabs(t,\zt)$.
 \end{proof}

\begin{theorem}\label{th:gkq}
 GKQ for \eqref{eq:i-anf} holds at \ifnum\FmtChoice=1 the point \fi $(t,\zt(t))\in\Fabs$ if GKQ for \eqref{eq:e-anf} holds at $(t,w,\zt(t),\zw(w))\in\Feabs$ for any (and hence all) $w\in W(t)$.
\end{theorem}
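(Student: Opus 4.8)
The plan is to reduce everything to the coordinate projection
\[
  \pi(\delta t,\delta w,\delta\zt,\delta\zw) = (\delta t,\delta\zt),
  \qquad
  \pi^{*}(\eta_t,\eta_{\zt}) = (\eta_t,0,\eta_{\zt},0),
\]
where $\pi^{*}$, the adjoint of $\pi$, is the inclusion of the $(t,\zt)$-space as a coordinate subspace of the lifted space of \eqref{eq:e-anf}. The engine of the proof is the elementary identity, valid for \emph{any} cone $K$ in the lifted space,
\[
  \bigl(\pi(K)\bigr)^{*} = (\pi^{*})^{-1}\bigl(K^{*}\bigr),
\]
which holds because $y\in(\pi(K))^{*}$ iff $\langle y,\pi x\rangle\ge 0$ for all $x\in K$, i.e.\ iff $\langle\pi^{*}y,x\rangle\ge 0$ for all $x\in K$, i.e.\ iff $\pi^{*}y\in K^{*}$. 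No closedness or convexity is needed.

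First I would establish the two \emph{unconditional} projection identities
\[
  \pi\bigl(\Teabs(t,w,\zt,\zw)\bigr) = \Tabs(t,\zt),
  \qquad
  \pi\bigl(\Tlineabs(t,w,\zt,\zw)\bigr) = \Tlinabs(t,\zt).
\]
The inclusions ``$\subseteq$'' are immediate: projecting the defining $\Feabs$-sequences of $\Teabs$ to $\Fabs$ (recall $\Feabs$ is a lifting of $\Fabs$, and $\ci=\abs{w}\ge 0$ along them) yields defining sequences for $\Tabs$; and $(\delta t,\delta\zt)\in\Tlinabs(t,\zt)$ is literally one of the defining conditions in the characterization of $\Tlineabs$ given above. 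The reverse inclusions are the lifting step and are CQ-free: a tangent direction $(\delta t,\delta\zt)\in\Tabs$ is lifted exactly as in the ``$\Rightarrow$'' part of the proof of \cref{th:akq}, choosing $\Sigw$ by the signs of the limiting slack direction and setting $\zw_k=w_k=\Sigw\ci(t_k,\abs{\zt_k})$; for the linearized cone, given $(\delta t,\delta\zt)\in\Tlinabs$ one completes it by $\delta w=\delta\zw$ with $\delta\zw_i=\sigw_i r_i$ for $i\notin\setA(t)$ and $\delta\zw_i=r_i$ for $i\in\setA(t)=\alpw(w)$, where $r\define\partial_1\ci\,\delta t+\partial_2\ci\,\delta\zeta$; the active-set choice is admissible precisely because $r_i\ge 0$ on $\setA(t)$ is forced by the inequality block of $\Tlinabs$.

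Combining the projection identities with the dual-cone formula yields
\[
  \Tabs(t,\zt)^{*} = (\pi^{*})^{-1}\bigl(\Teabs(t,w,\zt,\zw)^{*}\bigr),
  \quad
  \Tlinabs(t,\zt)^{*} = (\pi^{*})^{-1}\bigl(\Tlineabs(t,w,\zt,\zw)^{*}\bigr).
\]
Hence, if GKQ holds for \eqref{eq:e-anf}, i.e.\ $\Teabs^{*}=\Tlineabs^{*}$, then applying $(\pi^{*})^{-1}$ to both sides gives $\Tabs(t,\zt)^{*}=\Tlinabs(t,\zt)^{*}$, which is GKQ for \eqref{eq:i-anf}; by \cref{rem:independece} this is independent of the chosen $w\in W(t)$.

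I expect the main obstacle to be conceptual rather than computational, and it is exactly what explains why only this one implication can be claimed: the adjoint $\pi^{*}$ is \emph{not} surjective, so passing to the preimage $(\pi^{*})^{-1}(\,\cdot\,)$ retains only the $(\eta_t,\eta_{\zt})$-components of a dual cone and discards all information carried in the slack directions $(\eta_w,\eta_{\zw})$. Consequently equality of the two preimages (GKQ for \eqref{eq:i-anf}) cannot be pushed back up to equality of $\Teabs^{*}$ and $\Tlineabs^{*}$ in the full lifted space, so the converse genuinely fails and GCQ preservation under the slack reformulation remains open. The only technical care needed is the verification of the two projection identities, in particular checking that the lifting construction of \cref{th:akq} is compatible with the description of $\Tlineabs$ on the active index set.
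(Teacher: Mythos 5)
Your proof is correct and is essentially the paper's argument in a more systematic packaging: the zero-padding $\pi^{*}\omega=(\omega t,0,\omega \zt,0)$ is exactly the lift used in the paper's proof of \cref{th:gkq}, and the two inclusions that proof actually relies on, namely $\pi(\Teabs)\subseteq\Tabs$ and $\Tlinabs\subseteq\pi(\Tlineabs)$, are precisely the two halves of your projection identities that do the work. You prove slightly more than necessary by establishing both identities with equality, but the mechanism --- push $\omega$ up with $\pi^{*}$, apply GKQ for \eqref{eq:e-anf}, pull back down via a lift of each $\delta\in\Tlinabs$ --- is the same, and your explicit verification of the active-set sign condition $r_i\ge 0$ on $\setA(t)$ correctly fills in a detail the paper leaves implicit.
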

\begin{proof}
 The inclusion $\Tabs(t, \zt)^* \supseteq \Tlinabs(t, \zt)^*$ is always satisfied.
 Thus, we just have to show
\begin{equation*}
  \Tabs(t, \zt)^* \subseteq \Tlinabs(t, \zt)^*.
\end{equation*}
 Let $\omega=(\omega t,\omega \zt)\in \Tabs(t, \zt)^*$,
 i.e. $\omega^T\delta\ge 0$ for all $\delta=(\delta t,\delta \zt)\in \Tabs(t,\zt)$.
 Then, set $\tilde{\omega}=(\omega t,0,\omega \zt,0)$ and obtain $\tilde{\omega}^T\tilde \delta=\omega^T\delta\ge 0$ for all $\tilde \delta\in \Teabs(t,w,\zt,\zw)$ where $w\in W(t)$ is arbitrary.
 By assumption, then $\tilde{\omega}^T\tilde \delta\ge 0$ for all $\tilde \delta\in \Tlineabs(t,w,\zt,\zw)$ holds.
 This implies $\omega^T\delta=\tilde{\omega}^T\tilde \delta\ge 0$ for all $\delta\in \Tlinabs(t,\zt)$.
\end{proof}

The converse is unlikely to hold, but we are, at the same time, not aware of a counterexample.
Next, we consider the branch problems and relations of ACQ and GCQ for all branch problems.
Here, we can exploit sign information to show equivalence of GCQ for the
branch problems of \eqref{eq:i-anf} and \eqref{eq:e-anf}.

\begin{theorem}\label{th:branch-acq}
 ACQ for \eqref{eq:branch-anf} holds at $(t,\zt(t))\in\Fsig$ if and only if
 ACQ for \eqref{eq:branch-e-anf} holds at $(t,w,\zt(t),\zt(w))\in\Fesig$ for any (and hence all) $w\in W(t)$.
\end{theorem}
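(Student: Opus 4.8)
The plan is to exhibit $\Fesig$ as a smooth graph over $\Fsig$ and to transport both cones along the associated embedding. For the fixed branch signatures $\Sigt,\Sigw$ I would introduce the smooth map
\begin{equation*}
  \Phi(t,\zt) \define \bigl( t, \Sigw\ci(t,\Sigt\zt), \zt, \Sigw\ci(t,\Sigt\zt) \bigr),
\end{equation*}
defined on a neighborhood of the base point, whose differential $D\Phi(t,\zt)$ acts as the identity on the $(\delta t,\delta\zt)$-components and has $(w,\zw)$-block $\Sigw[\partial_1\ci\,\delta t + \partial_2\ci\,\Sigt\,\delta\zt]$ (all partials at $(t,\Sigt\zt)$); in particular $D\Phi(t,\zt)$ is linear and injective. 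First I would check that $\Phi$ is locally a bijection from $\Fsig$ onto $\Fesig$: on $\Fesig$ the relations $w=\zw$, $\ci-\Sigw\zw=0$ and $\Sigw^2=I$ force $w=\zw=\Sigw\ci(t,\Sigt\zt)$, the slack sign condition $\Sigw\zw\ge 0$ becomes $\ci\ge 0$, and the remaining constraints collapse to those defining $\Fsig$. Hence $\Fesig=\Phi(\Fsig)$ near the base point, and in particular $\alpw(w)=\setA(t)$.

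Next I would prove $\Tesig(\Phi(t,\zt))=D\Phi(t,\zt)\,\Tsig(t,\zt)$ from the sequential definitions, in the style of \cref{th:akq}. For ``$\subseteq$'', every feasible sequence $(t_k,w_k,\zt_k,\zw_k)\in\Fesig$ realizing an element of $\Tesig$ satisfies $(t_k,\zt_k)\in\Fsig$ and $w_k=\zw_k=\Sigw\ci(t_k,\Sigt\zt_k)$; the projected sequence yields $(\delta t,\delta\zt)\in\Tsig$, and a first-order expansion of $\ci$ (using $\abs{\zt_k}=\Sigt\zt_k$ on the branch) shows the slack difference quotients converge to $\Sigw[\partial_1\ci\,\delta t+\partial_2\ci\,\Sigt\,\delta\zt]$, so the limit equals $D\Phi(t,\zt)(\delta t,\delta\zt)$. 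For ``$\supseteq$'' I would take a sequence realizing $(\delta t,\delta\zt)\in\Tsig$, lift it by $\Phi$, and use the same expansion to obtain a sequence realizing $D\Phi(t,\zt)(\delta t,\delta\zt)\in\Tesig$.

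Then I would establish $\Tlinesig(\Phi(t,\zt))=D\Phi(t,\zt)\,\Tlinsig(t,\zt)$ by comparing the defining system of $\Tlinsig$ in \cref{def:cones-branch-anf} with that of $\Tlinesig$ displayed above. The equality row $\partial_1\ci\,\delta t+\partial_2\ci\,\Sigt\,\delta\zt-\Sigw\,\delta\zw=0$ together with $\delta w=\delta\zw$ forces precisely $\delta\zw=\delta w=\Sigw[\partial_1\ci\,\delta t+\partial_2\ci\,\Sigt\,\delta\zt]$, i.e.\ the $(w,\zw)$-block of $D\Phi$; and, since $\alpw(w)=\setA(t)$ and $\Sigw^2=I$, the sign rows $\sigw_i\,\delta\zw_i\ge 0$ ($i\in\alpw(w)$) are term-by-term identical to the active inequality rows $\partial_1\cA\,\delta t+\partial_2\cA\,\Sigt\,\delta\zt\ge 0$ of $\Tlinsig$. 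Because $D\Phi(t,\zt)$ is linear and injective, $A=B\iff D\Phi\,A=D\Phi\,B$ for any sets $A,B$, so combining the two cone identities yields $\Tsig=\Tlinsig\iff\Tesig=\Tlinesig$, which is exactly ACQ for \eqref{eq:branch-anf} $\iff$ ACQ for \eqref{eq:branch-e-anf}. Since the condition for \eqref{eq:branch-anf} does not depend on $w$ or $\Sigw$, the equivalence holds for every admissible $\Sigw$, giving the ``any (and hence all) $w\in W(t)$'' statement (cf.\ \cref{rem:independece}). I expect the only genuine work to be the first-order bookkeeping in the \tcone identity; there is no deep obstacle here, because on a fixed branch both problems are smooth and $\Fesig$ is literally a graph over $\Fsig$, so a single injective linear map carries both cones and preserves equality — in sharp contrast to the Guignard situation for the full problems, where no such clean lifting is available.
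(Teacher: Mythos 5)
Your proof is correct, and it rests on the same mechanism as the paper's: the paper disposes of \cref{th:branch-acq} with the single line ``the proof proceeds as in \cref{th:akq}'', i.e.\ by lifting and projecting through the slack identity $w=\zw=\Sigw\ci(t,\Sigt\zt)$, which is exactly what your map $\Phi$ encodes. The difference is organizational, and it is a genuine simplification in the branch setting: instead of proving the two inclusions separately with sequence constructions (as the proof of \cref{th:akq} must, including an adaptive choice of $\Sigw$ on active components and a case distinction on $\abs{\zt_i}$ at kinks), you exploit that on a fixed branch $\Sigw$ is fixed and $\abs{\zt}=\Sigt\zt$ holds identically, so $\Fesig$ is literally the graph of a smooth map over $\Fsig$; both the tangent and the linearized cone are then pushed forward by the single injective linear map $D\Phi$, and equality of cones transports in both directions at once. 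You also make explicit the one point the paper leaves implicit when it refers back to \cref{th:akq}: the sign rows $\sigw_i\,\delta\zw_i\ge 0$, $i\in\alpw(w)$, of $\Tlinesig$ are not additional restrictions but coincide with the active inequality rows of $\Tlinsig$, because $\alpw(w)=\setA(t)$ and $\Sigw^2=I$. Without that observation the identity $\Tlinesig=D\Phi\,\Tlinsig$ would not close, so it is right that you verified it; the rest (the $\Phi(\Fsig)=\Fesig$ identification and the first-order expansion of $\ci$ along feasible sequences) is routine and correctly carried out.
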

\begin{proof}
  The proof proceeds as in \cref{th:akq}.
\end{proof}

\begin{theorem}\label{th:branch-gcq}
 GCQ for \eqref{eq:branch-anf} holds at $(t,\zt(t))\in\Fsig$ if and only if
 GCQ for \eqref{eq:branch-e-anf} holds at $(t,w,\zt(t),\zt(w))\in\Fesig$ for any (and hence all) $w\in W(t)$.
\end{theorem}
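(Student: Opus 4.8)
The plan is to realize, for a fixed branch, the slack formulation \eqref{eq:branch-e-anf} as an exact smooth lifting of \eqref{eq:branch-anf}, so that the corresponding branch cones are related by a single injective linear map, and then to transport GCQ across this map by dualization. Concretely, since $\Sigw$ is definite, hence invertible with $\Sigw^{-1}=\Sigw$, the equality $\ci(t,\Sigt\zt)-\Sigw\zw=0$ in \eqref{eq:branch-e-anf} forces $\zw=\Sigw\ci(t,\Sigt\zt)$, and together with $w=\zw$ this determines $(w,\zw)$ from $(t,\zt)$. Thus $\Fesig$ is the graph over $\Fsig$ of the smooth map $\psi(t,\zt)=(t,\Sigw\ci(t,\Sigt\zt),\zt,\Sigw\ci(t,\Sigt\zt))$, whose derivative at the point of interest is the injective linear map
\[
  \Phi\colon(\delta t,\delta\zt)\mapsto\bigl(\delta t,\;\Sigw[\partial_1\ci\,\delta t+\partial_2\ci\,\Sigt\delta\zt],\;\delta\zt,\;\Sigw[\partial_1\ci\,\delta t+\partial_2\ci\,\Sigt\delta\zt]\bigr),
\]
with all partials evaluated at $(t,\Sigt\zt)$.

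First I would prove the two cone identities $\Tesig=\Phi(\Tsig)$ and $\Tlinesig=\Phi(\Tlinsig)$. The tangent-cone identity holds because $\psi$ is a smooth embedding, so Bouligand tangent cones transform by its derivative $\Phi$. For the linearized cones I would read off the description of $\Tlinesig$ for \eqref{eq:branch-e-anf}: the equality fixes $\delta\zw=\Sigw[\partial_1\ci\,\delta t+\partial_2\ci\,\Sigt\delta\zt]$ and $\delta w=\delta\zw$, while for the active slack indices — here $\alpw(w)=\setA(t)$ — the sign condition $\sigw_i\delta\zw_i\ge0$ reduces to $[\partial_1\cA\,\delta t+\partial_2\cA\,\Sigt\delta\zt]_i\ge0$, which is already imposed by $(\delta t,\delta\zt)\in\Tlinsig(t,\zt)$. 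Hence the only remaining free data is $(\delta t,\delta\zt)\in\Tlinsig$, giving $\Tlinesig=\Phi(\Tlinsig)$. This use of the fixed branch signs is exactly what is unavailable in the full, non-branch setting of \cref{th:gkq}.

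Then I would dualize using the elementary identity $\Phi(K)^*=(\Phi^T)^{-1}(K^*)$, valid for any cone $K$ and linear map $\Phi$, which yields $\Tesig^*=(\Phi^T)^{-1}(\Tsig^*)$ and $\Tlinesig^*=(\Phi^T)^{-1}(\Tlinsig^*)$. Since $\Phi$ is injective, $\Phi^T$ is surjective, and therefore $\Phi^T\bigl((\Phi^T)^{-1}(A)\bigr)=A$ for every set $A$. Consequently $\Tesig^*=\Tlinesig^*$ implies, upon applying $\Phi^T$, that $\Tsig^*=\Tlinsig^*$; the converse follows at once by taking preimages. As $\Tsig^*\supseteq\Tlinsig^*$ and $\Tesig^*\supseteq\Tlinesig^*$ always hold, this equivalence is precisely the equivalence of GCQ. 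The construction is uniform in the admissible choices of $\Sigw$ (and hence of $w\in W(t)$), which gives the ``any (and hence all)'' clause, consistently with \cref{rem:independece}.

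I expect the main obstacle to be the precise verification that $\Tlinesig=\Phi(\Tlinsig)$, i.e. confirming the identification $\alpw(w)=\setA(t)$ and that the active sign conditions on $\delta\zw$ add nothing beyond membership in $\Tlinsig$. Once this clean linear-image description is secured, the tangent-cone transport and the dualization via surjectivity of $\Phi^T$ are routine, and the overall skeleton parallels the ACQ argument of \cref{th:branch-acq} and \cref{th:akq}.
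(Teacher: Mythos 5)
Your proposal is correct and is essentially the paper's own argument in a more systematic packaging: the paper's forward direction constructs exactly $\tilde\omega=\Phi^T\omega$ (with your $\Phi$) and uses the adjoint identity $\tilde\omega^T\tilde\delta=\omega^T\delta$, while its reverse direction embeds $\omega\mapsto(\omega t,0,\omega\zt,0)$ as in \cref{th:gkq}; both steps are instances of your identities $\Tesig=\Phi(\Tsig)$, $\Tlinesig=\Phi(\Tlinsig)$ and $(\Phi K)^*=(\Phi^T)^{-1}(K^*)$ with $\Phi^T$ surjective. Your verification that $\alpw(w)=\setA(t)$ and that the active sign conditions on $\delta\zw$ are absorbed into membership in $\Tlinsig$ is exactly the point where the fixed branch signs are exploited, consistent with the paper.
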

\begin{proof}
The inclusions $\Tsig(t,\zt)^*\supseteq \Tlinsig(t,\zt)^*$ and
 $\Tesig(t,\zt)^*\supseteq \Tlinesig(t,\zt)^*$
 are always satisfied. Thus, we just need to prove
 \begin{equation*}
   \Tsig(t, \zt)^* \subseteq \Tlinsig(t, \zt)^*
   \iff
   \Tesig(t, w, \zt, \zw)^* \subseteq \Tlinesig(t, w, \zt, \zw)^*.
 \end{equation*}
 We start with the implication ``$\Rightarrow$''.
 Let $\omega=(\omega t,\omega w,\omega \zt,\omega\zw)\in\Tesig(t,w,\zt,\zw)^*$, i.e. $\omega^T\delta\ge 0$ for all $\delta=(\delta t,\delta w,\delta \zt\delta \zw)\in \Tesig(t,w,\zt,\zw)$.
 \ifcase0
 Set
 \begin{equation*}
   \tilde\omega
   =
   (\tilde\omega t,\tilde\omega \zt)
   =
   (\omega t,\omega \zt) +
   (\omega w + \omega \zw) \Sigw
   (\partial_1\ci(t,\Sigt \zt),
   \partial_2\ci(t,\Sigt\zt)\Sigt).
 \end{equation*}
 \or
 Set $\tilde\omega =(\tilde\omega t,\tilde\omega \zt)$ with
 \begin{align*}
   \tilde\omega t
   &\define
   \omega t + (\omega w + \omega \zw)\Sigw \partial_1\ci(t,\Sigt \zt),
   \\
   \tilde\omega\zt
   &\define
   \omega\zt +
   (\omega w + \omega \zw)\Sigw \partial_2\ci(t,\Sigt\zt)\Sigt.
 \end{align*}
 \fi
 Then, we have $\tilde\omega^T \tilde\delta = \omega^T \delta \ge 0$ for all $\delta=(\delta t,\delta \zt)\in \Tsig(t,\zt)$ and thus $\tilde\omega\in\Tlinsig(t,\zt)$.
 Then, $\omega^T\delta\ge 0$ for all $\delta=(\delta t,\delta w,\delta \zt\delta \zw)\in \Tlinesig(t,w,\zt,\zw)$ as $\omega^T\delta=\tilde\omega^T\tilde\delta$ holds.
The reverse implication may be proved as shown in \cref{th:gkq}.
\end{proof}

\section{Counterpart MPCCs}
\label{sec:counterpart}

In this section we restate the MPCC counterpart problems for the two formulations
\eqref{eq:i-anf} and \eqref{eq:e-anf} and we present the relations between them.

\subsection{Counterpart MPCC for the General Abs-Normal NLP}

To reformulate \eqref{eq:i-anf} as an MPCC, we split $\zt$ into its nonnegative part and the modulus of its
nonpositive part, $\ut \define [\zt]^+\define\max(\zt,0)$ and $\vt \define[\zt]^-\define \max(-\zt,0)$.
Then, we add complementarity of these two variables
to replace $\abs{\zt}$ by $\ut+\vt$ and $\zt$ itself by $\ut-\vt$.

\begin{definition}[Counterpart MPCC of \eqref{eq:i-anf}]
  \label{def:i-mpec}
  The \emph{counterpart MPCC} of the non-smooth NLP \eqref{eq:i-anf} reads
   \begin{align*}
     \sminst[t,\ut,\vt]{f(t)}
     & \ce(t,\ut+\vt)=0, \quad
      \ci(t,\ut+\vt)\ge 0,\\
     & \cz(t,\ut+\vt)-(\ut-\vt)=0,\tag{I-MPCC} \label{eq:i-mpec}\\
     & 0 \le \ut \compl \vt \ge 0,
   \end{align*}
   where $\ut, \vt\in\R^{s_t}$.
   The feasible set of \eqref{eq:i-mpec} is denoted by $\Fmpec$.
\end{definition}

Given an abs-normal NLP \eqref{eq:i-anf} and its counterpart MPCC \eqref{eq:i-mpec},
the mapping $\phi\: \Fmpec \to \Fabs$ defined by
\begin{equation*}
  \phi(t, \ut, \vt) = (t, \ut - \vt) \qtextq{and} \Inv\phi(t, \zt) = (t, [\zt]^+, [\zt]^-)
\end{equation*}
is a homeomorphism. This result was obtained in \cite[Lemma 31]{Hegerhorst_et_al:2019:MPEC2}.

Corresponding to the active switching set in the previous section, we introduce index sets for MPCCs.

 \begin{definition}[Index Sets]
  We denote by $\setUt_0\define\defset{i\in \set{1, \dots, s_t}}{\ut_i=0}$
  the set of indices of active inequalities $\ut_i\geq 0$,
  and by $\setUt_+\define\defset{i\in\set{1, \dots, s_t}}{\ut_i>0}$
  the set of indices of inactive inequalities $\ut_i\geq 0$.
  Analogous definitions hold of $\setVt_0$ and $\setVt_+$.
  By $\setDt\define\setUt_0\cap\setVt_0$ we denote
  the set of indices of non-strict (degenerate) complementarity pairs.
  Thus we have the partitioning
  $\set{1, \dots, s_t} = \setUt_+ \cup \setVt_+ \cup \setDt$.
\end{definition}

In order to define MPCC-CQs in the spirit of Abadie and Guignard,
we introduce the \tcone, the complementarity cone,
and the MPCC-linearized cone.

\begin{definition}[\TCone and MPCC-Linearized Cone for \eqref{eq:i-mpec}, see \cite{FlegelDiss}]\label{def:cones-i-mpec}
Consider a feasible point $(t,\ut,\vt)$ of \eqref{eq:i-mpec} with associated index sets $\setUt_+$, $\setVt_+$ and $\setDt$.
The \emph{\tcone} to $\Fmpec$ at $(t,\ut,\vt)$ is
\begin{equation*}
  \Tmpec(t, \ut, \vt) \define
  \begin{defarray}{(\delta t, \delta\ut, \delta\vt)}
    \exists \tau_k \searrow 0,\
    \Fmpec \ni (t_k, \ut_k, \vt_k) \to (t, \ut, \vt){:} \\
    \tau_k^{-1} (t_k - t, \ut_k - \ut, \vt_k - \vt)
    \to (\delta t, \delta \ut, \delta \vt)
  \end{defarray}
  .
\end{equation*}
The \emph{MPCC-linearized cone} at $(t,\ut,\vt)$ is
\begin{equation*}
  \Tlinmpec(t, \ut, \vt) \define
\ifcase1
  \begin{defarray}[r@{\medspace}l]
    {\begin{pmatrix}\delta t \\ \delta\ut \\ \delta\vt \end{pmatrix}}
    \partial_1 \ce \delta t + \partial_2 \ce (\delta\ut + \delta\vt) &= 0,\\
    \partial_1 \cA \delta t + \partial_2 \cA (\delta\ut + \delta\vt) &\ge 0,\\
    \partial_1 \cz \delta t + \partial_2 \cz (\delta\ut + \delta\vt) &=
    \delta \ut - \delta \vt, \\
    \delta \ut_i &= 0,\ i \in \setVt_+, \\
    \delta \vt_i &= 0,\ i \in \setUt_+, \\
    0 \le \delta \ut_i \compl \delta \vt_i &\ge 0,\ i \in \setDt
  \end{defarray}
\or
  \begin{defarray}[r@{\medspace}l]
    {\begin{pmatrix}\delta t \\ \delta\ut \\ \delta\vt \end{pmatrix}}
    \partial_1 \ce \delta t + \partial_2 \ce (\delta\ut + \delta\vt) &= 0,\\
    \partial_1 \cA \delta t + \partial_2 \cA (\delta\ut + \delta\vt) &\ge 0,\\
    \partial_1 \cz \delta t + \partial_2 \cz (\delta\ut + \delta\vt) &=
    \delta \ut - \delta \vt, \\
    (\delta \ut, \delta \vt) &\in \Tcompl(\ut, \vt)
  \end{defarray}
\fi
\end{equation*}
with \emph{complementarity cone}
\begin{equation*}
  \Tcompl(\ut, \vt)
  \define
\ifcase0
  \begin{defarray}{(\delta \ut, \delta \vt)}
    \delta \ut_i = 0,\ i \in \setVt_+,\
    \delta \vt_i = 0,\ i \in \setUt_+, \\
    0 \le \delta \ut_i \compl \delta \vt_i \ge 0,\ i \in \setDt
  \end{defarray}
\else
  \begin{defarray}[r@{\medspace}l]{(\delta \ut, \delta \vt)}
    \delta \ut_i &= 0,\ i \in \setVt_+, \\
    \delta \vt_i &= 0,\ i \in \setUt_+, \\
    0 \le \delta \ut_i \compl \delta \vt_i &\ge 0,\ i \in \setDt
  \end{defarray}
\fi
.
\end{equation*}
Here, all partial derivatives are evaluated at $(t, \ut + \vt)$.
\end{definition}

Note that the MPCC-linearized cone was originally stated in \cite{Pang_Fukushima_1999} and \cite{Scheel_Scholtes_2000}, but was not further investigated there.
Moreover, we modified the definition in \cite{FlegelDiss} by introducing the complementarity cone which is studied in the next lemma.

\begin{lemma}
  The complementarity cone $\Tcompl(\hut, \hvt)$ is the \tcone
  and also the linearized cone to the complementarity set
  $\defset{(\ut, \vt)}{0 \le \ut \compl \vt \ge 0}$ at $(\hut, \hvt)$.
\end{lemma}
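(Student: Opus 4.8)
The plan is to reduce everything to a componentwise analysis, exploiting that the complementarity set $C \define \defset{(\ut,\vt)}{0 \le \ut \compl \vt \ge 0}$ factors as a Cartesian product over the components $i$, each factor being the L-shaped cone $\defset{(a,b)}{a \ge 0,\ b \ge 0,\ ab = 0}$. Both the Bouligand tangent cone and the conditions defining $\Tcompl$ respect this product structure, so it suffices to treat each index separately according to whether $i \in \setUt_+$, $i \in \setVt_+$, or $i \in \setDt$.

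First I would show that $\Tcompl(\hut,\hvt)$ is the tangent cone, establishing both inclusions from the sequential definition. For "tangent cone $\subseteq \Tcompl$", take a tangent $(\delta\ut,\delta\vt)$ with witnesses $(\ut_k,\vt_k) \in C$ and $\tau_k \searrow 0$. If $i \in \setUt_+$ then $\hut_i > 0$, so $(\ut_k)_i > 0$ and hence $(\vt_k)_i = 0$ for large $k$ by complementarity, giving $\delta\vt_i = 0$; symmetrically $\delta\ut_i = 0$ for $i \in \setVt_+$. If $i \in \setDt$ then $(\ut_k)_i,(\vt_k)_i \ge 0$ forces $\delta\ut_i,\delta\vt_i \ge 0$, while rescaling the identity $(\ut_k)_i(\vt_k)_i = 0$ by $\tau_k^{-2}$ and using $\hut_i = \hvt_i = 0$ yields $\delta\ut_i\,\delta\vt_i = 0$ in the limit, so $0 \le \delta\ut_i \compl \delta\vt_i \ge 0$. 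For the reverse inclusion I would take $(\delta\ut,\delta\vt) \in \Tcompl$ and use the straight-line sequence $(\ut_k,\vt_k) \define (\hut,\hvt) + \tau_k(\delta\ut,\delta\vt)$ for any $\tau_k \searrow 0$: the defining conditions of $\Tcompl$ guarantee $(\ut_k,\vt_k) \in C$ for all small $\tau_k$, the nonnegativity and complementarity being verified componentwise in the three cases, so $(\delta\ut,\delta\vt)$ is tangent.

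For the linearized-cone claim I would use that $C$ is, near $(\hut,\hvt)$, the finite union of the polyhedral branch cones obtained by fixing, for each $i \in \setDt$, which of $\ut_i$ or $\vt_i$ is frozen to zero, the indices in $\setUt_+$ and $\setVt_+$ being already determined. This is the local-union argument already used in \cref{le:decomp-cones-anf}. Each branch is a polyhedral convex cone, for which the tangent cone and the linearized cone coincide and are directly computable; taking the union of these branch cones over all choices reproduces exactly the componentwise alternative $0 \le \delta\ut_i \compl \delta\vt_i \ge 0$ on $\setDt$, hence equals $\Tcompl(\hut,\hvt)$. Thus the linearized cone, understood as this union of branch linearizations, agrees with both $\Tcompl$ and the tangent cone.

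The main obstacle is the degenerate set $\setDt$. There $C$ is genuinely nonconvex, so $\Tcompl$ is a nonconvex cone and cannot arise as the ordinary smooth linearization of the constraint $\ut_i\vt_i = 0$, whose gradient vanishes at $(\hut,\hvt)$ and would produce the full nonnegative quadrant rather than the complementary pair of rays. The complementarity must therefore be retained through the branch (MPCC) linearization, and the only genuinely nontrivial computation is extracting $\delta\ut_i\,\delta\vt_i = 0$ in the tangent-cone inclusion above.
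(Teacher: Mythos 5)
Your proof is correct and follows essentially the same route as the paper: a componentwise case analysis over $\setUt_+$, $\setVt_+$, $\setDt$ for the inclusion of the \tcone in $\Tcompl$, and the straight-line sequence $(\hut,\hvt)+\tau_k(\delta\ut,\delta\vt)$ for the converse. You are merely more explicit where the paper is terse, namely in extracting $\delta\ut_i\,\delta\vt_i=0$ on $\setDt$ via the $\tau_k^{-2}$ rescaling and in justifying the linearized-cone claim (which the paper dismisses as clear) through the branch-union argument.
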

\begin{proof}
  Given a tangent vector
  $(\delta \ut, \delta \vt) = \lim \Inv\tau_k (\ut_k - \hut, \vt_k - \hvt)$
  where $0 \le \ut_k \compl \vt_k \ge 0$ and $\tau_k \searrow 0$,
  we have for $k$ large enough:
  \begin{align*}
    \ut_{ki} > 0,\ \vt_{ki} &= 0, & i \in \setUt_+\ (\hut_i &> 0,\ \hvt_i = 0), \\
    \ut_{ki} = 0,\ \vt_{ki} &> 0, & i \in \setVt_+\ (\hut_i &= 0,\ \hvt_i > 0), \\
    0 \le \ut_{ki} \compl \vt_{ki} &\ge 0, & i \in \setDt\ (\hut_i &= 0,\ \hvt_i = 0).
  \end{align*}
  This implies $(\delta \ut, \delta \vt) \in \Tcompl(\hut, \hvt)$.
  Conversely, every $(\delta \ut, \delta \vt) \in \Tcompl(\hut, \hvt)$
  is a tangent vector generated by the sequence
  $(\ut_k, v_k) = (\hut, \hvt) + \tau_k (\delta \ut, \delta \vt)$
  with $\tau_k = 1/k$, $k \in \mathbb N_{>0}$.
  The linearized cone clearly coincides with the \tcone.
\end{proof}

\begin{lemma} \label{le:hom-T-i}
  Given \eqref{eq:i-anf} with counterpart MPCC \eqref{eq:i-mpec},
  consider $(t, \zt) \in \Fabs$ with $\sigt=\sigt(t)$ and $(t, \ut, \vt) = \phi^{-1}(t,\zt)\in\Fmpec$ with associated index sets $\setUt_+$, $\setVt_+$ and $\setDt$.
  Define $\psi\: \Tmpec(t, \ut, \vt) \to \Tabs(t, \zt)$ and $\psi\: \Tlinmpec(t, \ut, \vt) \to \Tlinabs(t, \zt)$ as
  \begin{equation*}
    \psi(\delta t, \delta \ut, \delta \vt) = (\delta t, \delta \ut - \delta \vt) \qtextq{and} \Inv\psi(\delta t, \delta \zt) = (\delta t, \delpos\zt, \delneg\zt).
  \end{equation*}
  Here, $\delpos\zt, \delneg\zt$ map $\delta\zt$ into the complementarity cone via
  \bgroup
  \def\arraystretch{1.15}
  \begin{equation*}
    \delpos{\zt_i} =
    \begin{ccases}
      +\delta \zt_i, & i \in \setUt_+\ (\sigt_i > 0) \\ 0, & i \in \setVt_+\ (\sigt_i < 0) \\ {}[\delta \zt_i]^+, & i \in \setDt\ (\sigt_i = 0)
    \end{ccases}
    ,\quad
    \delneg{\zt_i} =
    \begin{ccases}
      0, & i \in \setUt_+\ (\sigt_i > 0) \\ -\delta \zt_i, & i \in \setVt_+\ (\sigt_i < 0) \\ {}[\delta \zt_i]^-, & i \in \setDt\ (\sigt_i = 0)
    \end{ccases}.
  \end{equation*}
  \egroup
  Then, both functions $\psi$ are homeomorphisms.
\end{lemma}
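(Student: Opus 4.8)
The plan is to treat $\psi$ as the ``tangent'' of the homeomorphism $\phi$: since $\phi(t,\ut,\vt)=(t,\ut-\vt)$ is linear, its action on difference quotients is exactly $\psi$, and $\Inv\psi$ plays the analogous role for $\Inv\phi(t,\zt)=(t,[\zt]^+,[\zt]^-)$. Both $\psi$ and $\Inv\psi$ are given by explicit piecewise-linear formulas and are therefore continuous, so in each of the two cases it remains to show that $\psi$ and $\Inv\psi$ map the stated cones into one another and are mutually inverse. The algebraic heart of the argument is a component-wise identity over the three index sets, which coincide with the sign cases $\sigt_i>0$ ($i\in\setUt_+$), $\sigt_i<0$ ($i\in\setVt_+$), and $\sigt_i=0$ ($i\in\setDt=\alpt(t)$): whenever $(\delta\ut,\delta\vt)\in\Tcompl(\ut,\vt)$, the choice $\delta\zt=\delta\ut-\delta\vt$ yields $\delta\ut+\delta\vt=\delta\zeta$ with $\delta\zeta$ as in \cref{def:cones-i-anf}; conversely $\delpos\zt-\delneg\zt=\delta\zt$, $\delpos\zt+\delneg\zt=\delta\zeta$, and $(\delpos\zt,\delneg\zt)\in\Tcompl(\ut,\vt)$. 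These identities simultaneously give $\psi\circ\Inv\psi=\mathrm{id}$ everywhere and $\Inv\psi\circ\psi=\mathrm{id}$ on any set of vectors whose $(\delta\ut,\delta\vt)$-part lies in $\Tcompl$.

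For the tangent cones I would transport sequences through $\phi$. Given $(\delta t,\delta\ut,\delta\vt)\in\Tmpec(t,\ut,\vt)$ with generating sequences $(t_k,\ut_k,\vt_k)\in\Fmpec$ and $\tau_k\searrow 0$, the images $\phi(t_k,\ut_k,\vt_k)=(t_k,\ut_k-\vt_k)\in\Fabs$ have difference quotients converging to $(\delta t,\delta\ut-\delta\vt)=\psi(\delta t,\delta\ut,\delta\vt)$, so $\psi$ maps $\Tmpec$ into $\Tabs$. Conversely, from $(t_k,\zt_k)\in\Fabs$ generating $(\delta t,\delta\zt)\in\Tabs$, I would pass to $\Inv\phi(t_k,\zt_k)=(t_k,[\zt_k]^+,[\zt_k]^-)\in\Fmpec$ and compute the limits of $\tau_k^{-1}([\zt_k]^\pm-[\zt]^\pm)$ component-wise: for $i\in\setUt_+\cup\setVt_+$ the sign of $(\zt_k)_i$ is eventually constant, while for $i\in\setDt$ one has $\zt_i=0$, and positive homogeneity gives $\tau_k^{-1}[(\zt_k)_i]^+=[\tau_k^{-1}(\zt_k)_i]^+\to[\delta\zt_i]^+$, and likewise for the negative part. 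This identifies the limit with $\Inv\psi(\delta t,\delta\zt)$, so $\Inv\psi$ maps $\Tabs$ into $\Tmpec$. Finally, the preceding lemma guarantees that every tangent vector of $\Fmpec$ has its $(\delta\ut,\delta\vt)$-part in $\Tcompl$, which together with the identities above yields $\Inv\psi\circ\psi=\mathrm{id}$ on $\Tmpec$; hence $\psi$ is a homeomorphism between $\Tmpec$ and $\Tabs$.

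For the linearized cones I would argue purely algebraically. Substituting $\delta\ut-\delta\vt=\delta\zt$ and $\delta\ut+\delta\vt=\delta\zeta$ turns the defining relations of $\Tlinmpec(t,\ut,\vt)$, which involve $\partial_2\ce(\delta\ut+\delta\vt)$, $\partial_2\cA(\delta\ut+\delta\vt)$, and $\partial_2\cz(\delta\ut+\delta\vt)=\delta\ut-\delta\vt$, into exactly the defining relations of $\Tlinabs(t,\zt)$, while the membership $(\delta\ut,\delta\vt)\in\Tcompl$ built into $\Tlinmpec$ supplies the hypothesis the identities require; thus $\psi$ maps $\Tlinmpec$ into $\Tlinabs$. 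For the reverse, given $(\delta t,\delta\zt)\in\Tlinabs$ the vector $(\delpos\zt,\delneg\zt)$ lies in $\Tcompl$ by construction and reproduces $\delta\zt$ and $\delta\zeta$ through the same identities, placing $\Inv\psi(\delta t,\delta\zt)$ in $\Tlinmpec$; mutual inverseness again follows from the identities. The \emph{main obstacle} throughout is the degenerate set $\setDt$ (equivalently $i\in\alpt(t)$, $\sigt_i=0$), where complementarity and the absolute value are genuinely nonsmooth: this is where the case analysis $[\delta\zt_i]^+-[\delta\zt_i]^-=\delta\zt_i$, $[\delta\zt_i]^++[\delta\zt_i]^-=\abs{\delta\zt_i}$ and, for the tangent cone, the homogeneity limit of the positive and negative parts must be checked with care.
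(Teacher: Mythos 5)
Your proposal is correct and follows essentially the same route as the paper: transporting generating sequences through $\phi$ and $\Inv\phi$ for the tangent cones with a componentwise analysis over $\setUt_+$, $\setVt_+$, $\setDt$, and a purely algebraic substitution $\delta\zt=\delta\ut-\delta\vt$, $\delta\zeta=\delta\ut+\delta\vt$ for the linearized cones. Your use of positive homogeneity and continuity of $[\cdot]^\pm$ on the degenerate indices is a slightly cleaner way to get the limits that the paper obtains by splitting $\setDt$ into three subcases via sign constraints and complementarity, but it is not a genuinely different argument.
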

\begin{proof}
  First, consider $\psi\: \Tmpec(t, \ut, \vt) \to \Tabs(t, \zt)$:
  Given a \ifnum\FmtChoice=1 tangent \fi vector
  \begin{math}
    (\delta t, \delta \ut, \delta \vt) = \lim \Inv\tau_k (t_k - t, \ut_k - \ut, \vt_k - \vt) \in \Tmpec(t, \ut, \vt),
  \end{math}
  set
  \begin{math}
    (t_k, \zt_k) = \phi(t_k, \ut_k, \vt_k) = (t_k, \ut_k - \vt_k) \in \Fabs
  \end{math}
  to obtain
  \begin{equation*}
    \lim \frac{\zt_k - \zt}{\tau_k} = \lim \frac{(\ut_k - \ut) - (\vt_k - \vt)}{\tau_k} = \delta \ut - \delta \vt
    \implies (\delta t, \delta \ut - \delta \vt) \in \Tabs(t, \zt).
  \end{equation*}
  Conversely, given a vector $(\delta t, \delta \zt) = \lim \Inv\tau_k (t_k - t, \zt_k - \zt) \in \Tabs(t, \zt)$,
  define $(t_k, \ut_k, \vt_k) = \Inv\phi(t_k, \zt_k) = (t_k, [\zt_k]^+, [\zt_k]^-) \in \Fmpec$.
  Then, $\tau_k^{-1}((u_k-u)-(v_k-v))\to \delpos\zt - \delneg\zt$ holds.
  Thus, it remains to show $\tau_k^{-1}(u_k-u,v_k-v)\to (\delpos\zt,\delneg\zt)$ which is done componentwise:
  \begin{itemize}
  \item $i\in \setUt_+$: $\vt_i=0$ holds by feasibility and $\delneg\zt=0$ by definition.
  Thus, $(\ut_k)_i>0$ holds for $k$ large enough and by complementarity $(\vt_k)_i=0$ holds.
  Then, $\tau_k^{-1}((\ut_k)_i-\ut_i)\to \delpos\zt_i$ follows.
  \item $i\in\setVt_+$: $\tau_k^{-1}((\vt_k)_i-\vt_i)\to \delneg\zt_i$ follows as in the previous case.
  \item $i\in\setDt$ and $\delpos\zt_i>0$: $\delneg\zt_i=0$ holds by complementarity and so $\tau_k^{-1}((\ut_i)_k-(\vt_i)_k)\to \delpos\zt_i$.
  Then, $\tau_k^{-1}(\ut_i)_k\to \delpos\zt_i$ and $\tau_k^{-1}(\vt_i)_k\to 0$ because of sign constraints.
  \item $i\in\setDt$ and $\delneg\zt_i>0$:
  $\tau_k^{-1}(\ut_i)_k\to 0$ and $\tau_k^{-1}(\vt_i)_k\to \delneg\zt_i$ follow as in the previous case.
  \item $i\in\setDt$ and $\delpos\zt_i=\delneg\zt_i=0$: Then, $\tau_k^{-1}((\ut_i)_k-(\vt_i)_k)\to0$ holds. Because of sign constraints and complementarity, this can only hold if
  $\tau_k^{-1}(\ut_i)_k\to 0,\ \tau_k^{-1}(\vt_i)_k\to0$.
\end{itemize}
Altogether, this implies
  \begin{equation*}
    \lim \frac{(t_k - t, \ut_k - \ut, \vt_k - \vt)}{\tau_k} =
    (\delta t, \delpos\zt, \delneg\zt) \in \Tmpec(t, \ut, \vt).
  \end{equation*}
  By construction, $\psi$ and $\psi^{-1}$ are both continuous and inverse to each other.\\
  Second, consider $\psi\: \Tlinmpec(t, \ut, \vt) \to \Tlinabs(t, \zt)$:
  Given $(\delta t, \delta \ut, \delta \vt) \in \Tlinmpec(t, \ut, \vt)$,
  the vectors $\delta \zt = \delta \ut - \delta \vt$ and $\delta \zeta = \delta \ut + \delta \vt$ satisfy
\begin{equation*}
  \delta\zt_i =
  \begin{ccases}
    \delta\ut_i, & i \in \setUt_+ \\ -\delta\vt_i, & i \in \setVt_+ \\ \delta\ut_i - \delta\vt_i, & i \in \setDt
  \end{ccases}, \quad
  \delta\zeta_i =
  \begin{ccases}
    \delta\ut_i=\sigma_i\delta \zt_i, & i \in \setUt_+ \\ \delta\vt_i=\sigt_i\delta \zt_i, & i \in \setVt_+ \\ \delta\ut_i + \delta\vt_i=\abs{\delta \zt_i}, & i \in \setDt
  \end{ccases}.
\end{equation*}
  Thus, $(\delta t, \delta \zt) = \psi(\delta t, \delta \ut, \delta \vt) \in \Tlinabs(t, \zt)$.

 Conversely, the same case distinction yields
  \begin{math}
    (\delta t, \delta \ut, \delta \vt) =
    \Inv\psi(\delta t, \delta \zt)
    \in \Tlinmpec(\^t, \hut, \hvt)
  \end{math}
  for every $(\delta t, \delta \zt) \in \Tlinabs(\^t, \hzt)$.
  Again, $\psi$ and $\psi^{-1}$ are both continuous and inverse to each other by construction.
\end{proof}

\begin{definition}[Branch NLPs for \eqref{eq:i-mpec}, see \cite{Pang_Fukushima_1999}]\label{def:branch-mpec}
 Consider a feasible point $(\^t,\hut,\hvt)$ of \eqref{eq:i-mpec}
 with associated index sets $\setUt_+$, $\setVt_+$, and $\setDt$
 and choose $\setPt \subseteq \setDt$
 with complement $\bar\setPt = \setDt \setminus \setPt$.
 The branch problem NLP($\setPt$) is defined as
  \begin{align*}
    \sminst[t,\ut,\vt]{f(t)}
    & \ce(t,\ut+\vt) = 0,\\
    & \ci(t,\ut+\vt) \ge 0,\\
    & \cz(t,\ut+\vt) - (\ut - \vt) = 0,\tag{NLP($\setPt$)}\label{eq:branch-mpec}\\
    & 0 = \ut_i, \ 0 \le \vt_i, \ i\in\setVt_+\cup\setPt,\\
    & 0 \le \ut_i, \ 0 = \vt_i, \ i\in\setUt_+\cup\bar{\setPt}.
  \end{align*}
  The feasible set of \eqref{eq:branch-mpec},
  which always contains $(\^t,\hut,\hvt)$, is denoted by $\Fp$.
\end{definition}

Clearly, the homeomorphism $\phi$ can be restricted to the branch problems \eqref{eq:branch-anf} and \eqref{eq:branch-mpec} where $\setPt=\{i\in\alpt(\^t)\colon \sigt_i=-1 \}$.
Thus, the mapping $\phi_{\setPt}\: \Fp \to \Fsig$ defined by
\begin{equation*}
 \phi_{\setPt}\define \phi\vert_{\setPt} \qtextq{and} \phi^{-1}_{\setPt}\define \phi^{-1}\vert_{\Sigt}
\end{equation*}
is a homeomorphism.
\ifCmp\else
\begin{definition}[\TCone and Linearized Cone for \eqref{eq:branch-mpec}]\label{def:cones-branch-mpec}
 Given \eqref{eq:branch-mpec}, consider a feasible point $(t,\ut,\vt)$.
\fi
 The \tcone to $\Fp$ at $(t,\ut,\vt)$ is
  \begin{equation*}
    \Tp(t,\ut,\vt) \define
    \begin{defarray}{(\delta t,\delta\ut,\delta\vt)}
      \exists \tau_k \searrow 0,\ \Fp \ni (t_k, \ut_k,\vt_k) \to (t,\ut,\vt){:} \\
      \tau_k^{-1} (t_k - t, \ut_k - \ut, \vt_k - \vt) \to (\delta t, \delta\ut, \delta\vt)
    \end{defarray}.
  \end{equation*}
  The linearized cone is
  \begin{equation*}
    \Tlinp(t, \ut,\vt) \define
    \begin{defarray}{\begin{pmatrix} \delta t \\ \delta\ut \\ \delta\vt \end{pmatrix}}
      \partial_1 \ce \delta t +
      \partial_2 \ce (\delta\ut + \delta\vt) = 0, \\
      \partial_1 \cA \delta t +
      \partial_2 \cA (\delta\ut + \delta\vt) \ge 0, \\
      \partial_1 \cz \delta t +
      \partial_2 \cz (\delta\ut + \delta\vt) = \delta\ut - \delta \vt, \\
      0 = \delta\ut_i \ \text{for} \ i\in\setVt_+\cup\setP,\ 0 = \delta\vt_i \ \text{for} \ i\in\setUt_+\cup\bar\setP,\\
      0 \le \delta \ut_i \ \text{for} \ i\in \bar{\setP},\  0 \le \delta \vt_i \ \text{for} \ i\in\setP
    \end{defarray}.
  \end{equation*}
  Here, all partial derivatives are evaluated at $(t, \ut + \vt)$.
\ifCmp\else
\end{definition}
\fi

\begin{lemma} \label{le:hom-T-branch-i}
  Given \eqref{eq:branch-anf} and \eqref{eq:branch-mpec} with $\setPt=\defset{ i\in\alpt(\^t)}{\sigt_i=-1}$.
  Consider $(t, \zt) \in \Fsig$ and $(t,\ut,\vt)= \phi^{-1}_{\setPt}(t, \zt)$.
  Define $\psi_{\setPt}\define \psi\vert_{\Tp}$, $\psi^{-1}_{\setPt}\define \psi^{-1}\vert_{\Tsig}$ and $\psi_{\setPt}\define \psi\vert_{\Tlinp}$, $\psi^{-1}_{\setPt}\define \psi^{-1}\vert_{\Tlinsig}$.
  Then,
  \begin{equation*}
  \psi_{\setP}\: \Tp(t, \ut, \vt) \to \Tsig(t, \hzt) \qtextq{and} \psi_{\setP}\: \Tlinp(t, \hut, \hvt) \to \Tlinsig(t, \hzt)
  \end{equation*}
  are homeomorphisms.
\end{lemma}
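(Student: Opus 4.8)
The plan is to build on \cref{le:hom-T-i}, which already supplies mutually inverse homeomorphisms $\psi\colon\Tmpec(t,\ut,\vt)\to\Tabs(t,\zt)$ and $\psi\colon\Tlinmpec(t,\ut,\vt)\to\Tlinabs(t,\zt)$. Since $\Fp\subseteq\Fmpec$ and $\Fsig\subseteq\Fabs$, monotonicity of the \tcone gives $\Tp(t,\ut,\vt)\subseteq\Tmpec(t,\ut,\vt)$ and $\Tsig(t,\zt)\subseteq\Tabs(t,\zt)$, while a comparison of the defining systems yields $\Tlinp\subseteq\Tlinmpec$ and $\Tlinsig\subseteq\Tlinabs$. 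Because the restriction of a homeomorphism to any subset of its domain is again a homeomorphism onto its image, it suffices to establish the two set equalities
\begin{equation*}
  \psi(\Tp(t,\ut,\vt))=\Tsig(t,\zt)
  \qtextq{and}
  \psi(\Tlinp(t,\ut,\vt))=\Tlinsig(t,\zt);
\end{equation*}
continuity and bijectivity of the restricted maps $\psi_{\setPt}$, $\psi_{\setPt}^{-1}$ are then inherited from \cref{le:hom-T-i}.

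For the \tcones I would argue through the branch homeomorphism $\phi_{\setPt}\colon\Fp\to\Fsig$. For ``$\subseteq$'', a tangent vector $(\delta t,\delta\ut,\delta\vt)\in\Tp$ is generated by a sequence $(t_k,\ut_k,\vt_k)\in\Fp$; applying $\phi_{\setPt}$ produces $(t_k,\zt_k)=(t_k,\ut_k-\vt_k)\in\Fsig$ converging to $(t,\zt)$, which realizes $\psi(\delta t,\delta\ut,\delta\vt)=(\delta t,\delta\ut-\delta\vt)\in\Tsig$. For ``$\supseteq$'', take $(\delta t,\delta\zt)\in\Tsig$ generated by $(t_k,\zt_k)\in\Fsig$ and set $(t_k,\ut_k,\vt_k)=\phi_{\setPt}^{-1}(t_k,\zt_k)\in\Fp$. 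The componentwise convergence analysis of \cref{le:hom-T-i} carries over, but it collapses here: since $\sigt$ is definite and feasibility of $\Fsig$ forces $\sigt_i(\zt_k)_i\ge0$, each $[\zt_k]^+$ and $[\zt_k]^-$ depends \emph{linearly} on $\zt_k$ and no degenerate complementarity case survives, so $\tau_k^{-1}(\ut_k-\ut,\vt_k-\vt)$ converges to $\psi^{-1}(\delta t,\delta\zt)$.

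For the linearized cones the claim is a direct algebraic match of the defining systems. Writing $\delta\zt=\delta\ut-\delta\vt$ and $\delta\zeta=\delta\ut+\delta\vt$ as in \cref{le:hom-T-i}, the $\ce$-, $\cA$-, and $\cz$-rows of $\Tlinp$ and $\Tlinsig$ coincide. The branch-specific rows correspond under the identifications $\setDt=\alpt(\^t)$, $\setPt=\defset{i\in\alpt(\^t)}{\sigt_i=-1}$, and $\bar\setPt=\defset{i\in\alpt(\^t)}{\sigt_i=+1}$: the conditions $\delta\ut_i=0$ for $i\in\setVt_+\cup\setPt$ and $\delta\vt_i=0$ for $i\in\setUt_+\cup\bar\setPt$, together with $\delta\ut_i\ge0$ for $i\in\bar\setPt$ and $\delta\vt_i\ge0$ for $i\in\setPt$, translate under $\delta\zt_i=\delta\ut_i-\delta\vt_i$ exactly into $\delta\zeta_i=\sigt_i\delta\zt_i$ for $i\notin\alpt(\^t)$ and into $\delta\zeta_i=\abs{\delta\zt_i}$ with $\sigt_i\delta\zt_i\ge0$ for $i\in\alpt(\^t)$, which is precisely the description of $\Tlinsig$.

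The main obstacle is bookkeeping rather than analysis: one must check that the branch choice $\setPt$ aligns the degenerate-complementarity partition $\setDt=\setPt\cup\bar\setPt$ with the active switching set $\alpt$ and the fixed definite signature $\sigt$, so that the sign constraints of the two formulations match index by index. Once this correspondence is pinned down, the homeomorphism property is transparent: on the branch the complementarity is resolved, so the restricted maps $\psi_{\setPt}$ and $\psi_{\setPt}^{-1}$ reduce to linear isomorphisms between the branch cones, and the set equalities above complete the proof.
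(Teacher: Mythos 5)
Your proposal is correct and follows essentially the same route as the paper: the paper's proof likewise records the index-set identities $\setPt=\defset{i\in\alpt(\^t)}{\sigt_i=-1}$, $\bar\setPt=\defset{i\in\alpt(\^t)}{\sigt_i=+1}$, $\setUt_+=\defset{i\notin\alpt(\^t)}{\sigt_i=+1}$, $\setVt_+=\defset{i\notin\alpt(\^t)}{\sigt_i=-1}$ (using $\alpt(\^t)=\setDt$) and then concludes that the claim follows directly from \cref{le:hom-T-i} by restriction. You have simply made explicit the details the paper leaves implicit --- the image computations for the restricted maps and the row-by-row match of the sign constraints --- and these check out.
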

\begin{proof}
  By construction and since $\alpt(\^t)=\setDt$, the following equalities of sets hold:
  \begin{align*}
   \setPt&=\{i\in\alpt(\^t): \sigt_i=-1\}, && \setVt_+=\{i\notin\alpt(\^t): \sigt_i=-1\},\\
   \bar{\setP}^t&=\{i\in\alpt(\^t): \sigt_i=+1\}, && \setUt_+=\{i\notin\alpt(\^t): \sigt_i=+1\}.
  \end{align*}
 Thus, the claim follows directly from \cref{le:hom-T-i}.
\end{proof}

\ifCmp\else
\begin{lemma}\label{le:decomp-cones-mpec}
\fi
Consider a feasible point $(t,\ut,\vt)$ of \eqref{eq:i-mpec} with associated branch problems \eqref{eq:branch-mpec}.
Then, the following decompositions of the \tcone
and of the abs-normal-linearized cone of \eqref{eq:i-mpec} hold
\ifCmp (for a proof see \cite{FlegelDiss})\fi:
 \begin{equation}
   \label{eq:decomp-cones-mpec}
  \Tmpec(t,\ut,\vt)=\bigcup_{\setPt} \Tp(t,\ut,\vt)
  \qtextq{and}
  \Tlinmpec(t,\ut,\vt)=\bigcup_{\setPt} \Tlinp(t,\ut,\vt).
 \end{equation}
\ifCmp
The following inclusions are also proved in \cite{FlegelDiss}:
\else
\end{lemma}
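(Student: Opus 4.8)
The plan is to follow the proof of \cref{le:decomp-cones-anf} almost verbatim, since the branch structure of \eqref{eq:i-mpec} through \cref{def:branch-mpec} is fully analogous to that of \eqref{eq:i-anf}, and indeed this decomposition for MPCCs is the one established by Flegel in \cite{FlegelDiss}. The first step is the local decomposition of the feasible set: I would show that a neighborhood $\setN$ of $(t,\ut,\vt)$ exists with $\Fmpec\cap\setN=\bigcup_{\setPt}(\Fp\cap\setN)$. The inclusion $\supseteq$ is immediate for every $\setN$ because $\Fp\subseteq\Fmpec$ for each choice of $\setPt\subseteq\setDt$. For $\subseteq$ I would exploit continuity on the strictly complementary indices: for $i\in\setUt_+$ we have $\ut_i>0$, so $\ut_i$ stays positive on a sufficiently small ball and complementarity forces $\vt_i=0$; symmetrically, for $i\in\setVt_+$ we keep $\ut_i=0$, $\vt_i>0$. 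Shrinking $\setN$ to the minimum of these radii, any $(t',\ut',\vt')\in\Fmpec\cap\setN$ retains the fixed sign pattern on $\setUt_+\cup\setVt_+$, and on the degenerate set $\setDt$ I would assign each index to $\setPt$ if $\ut_i'=0$ and to $\bar\setPt$ if $\vt_i'=0$ (choosing arbitrarily when both vanish). This places the point in $\Fp\cap\setN$ for the corresponding $\setPt$.

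With this set identity in hand, I would chain the tangent-cone equalities exactly as in \cref{le:decomp-cones-anf}, writing $\setT(\cdot;\setF)$ for the generic tangent cone and using that it depends only on a neighborhood of the base point:
\begin{equation*}
\Tmpec
= \setT(\cdot;\Fmpec)
= \setT(\cdot;\Fmpec\cap\setN)
= \setT(\cdot;{\textstyle\bigcup_{\setPt}}(\Fp\cap\setN))
= \bigcup_{\setPt}\setT(\cdot;\Fp)
= \bigcup_{\setPt}\Tp.
\end{equation*}
The key non-elementary equality here is the commutation of the tangent cone with the union, which is valid precisely because the number of admissible subsets $\setPt\subseteq\setDt$ is finite.

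For the MPCC-linearized cone the decomposition follows purely by comparing \cref{def:cones-i-mpec} with the branch linearized cone of \cref{def:branch-mpec}. The only block in which $\Tlinmpec$ and $\Tlinp$ differ is the complementarity part: $\Tlinmpec$ imposes $(\delta\ut,\delta\vt)\in\Tcompl$, that is $0\le\delta\ut_i\compl\delta\vt_i\ge0$ for $i\in\setDt$, whereas $\Tlinp$ fixes $\delta\ut_i=0$, $\delta\vt_i\ge0$ for $i\in\setPt$ and $\delta\vt_i=0$, $\delta\ut_i\ge0$ for $i\in\bar\setPt$. Since the disjunctive complementarity condition at each degenerate index is exactly the union of these two branch alternatives, taking the union over all $\setPt\subseteq\setDt$ reproduces $\Tcompl$, and hence $\Tlinmpec=\bigcup_{\setPt}\Tlinp$. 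As in the NLP case, the main obstacle is the $\subseteq$ direction of the local feasible-set decomposition, which rests on the continuity-plus-complementarity argument for the degenerate indices; the remainder is routine bookkeeping over the finite family of branches.
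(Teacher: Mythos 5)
Your proof is correct. Note that the paper does not actually prove this lemma itself: it simply defers to \cite{FlegelDiss}, both in the compact and the expanded version of the text. What you have done is supply the missing argument by transplanting the paper's own proof of \cref{le:decomp-cones-anf} to the MPCC setting, which is exactly the right move and is, in substance, the argument in Flegel's dissertation. The two points that carry the proof are both handled correctly: first, the local identity $\Fmpec\cap\setN=\bigcup_{\setPt}(\Fp\cap\setN)$, where the inclusion $\subseteq$ needs the continuity argument on $\setUt_+\cup\setVt_+$ (strict positivity persists, and complementarity then pins the partner variable to zero) together with the observation that on $\setDt$ every feasible point of \eqref{eq:i-mpec} already satisfies one of the two branch alternatives, so an admissible $\setPt$ can always be selected; second, the commutation of the \tcone with the union, which is legitimate only because the family $\set{\setPt\subseteq\setDt}$ is finite. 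Your treatment of the linearized cone is also right: the only difference between $\Tlinmpec$ and $\Tlinp$ sits in the complementarity block on $\setDt$, and the disjunction $0\le\delta\ut_i\compl\delta\vt_i\ge0$ is precisely the union over the two branch alternatives at each degenerate index, so the union over all $\setPt$ recovers $\Tcompl$. Nothing is missing.
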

\begin{proof}
A proof may be found in \cite{FlegelDiss}.
\end{proof}

\begin{lemma}\label{le:i-cones-mpec}
 Let $(t,\ut,\vt)$ be feasible for \eqref{eq:i-mpec}. Then,
\fi
 \begin{equation*}
  \Tmpec(t,\ut,\vt)\subseteq \Tlinmpec(t,\ut,\vt) \qtextq{and} \Tmpec(t,\ut,\vt)^* \supseteq \Tlinmpec(t,\ut,\vt)^*.
 \end{equation*}
\ifCmp\else
\end{lemma}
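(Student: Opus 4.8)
The plan is to mirror the proof of \cref{le:i-cones} almost verbatim, since the MPCC setting is structurally identical to the abs-normal NLP setting once the branch decompositions are available. First I would observe that each branch problem \eqref{eq:branch-mpec} is a \emph{smooth} standard NLP: its constraint functions $\ce$, $\ci$, $\cz$ are of class $\Cd$, and the complementarity condition $0 \le \ut \compl \vt \ge 0$ has been resolved, on the fixed index partition prescribed by \cref{def:branch-mpec}, into ordinary sign constraints $0 = \ut_i,\ 0 \le \vt_i$ for $i \in \setVt_+ \cup \setPt$ and $0 \le \ut_i,\ 0 = \vt_i$ for $i \in \setUt_+ \cup \bar\setPt$. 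Hence standard NLP theory applies to each branch and yields the elementary inclusion of the tangent cone into the linearized cone, $\Tp(t,\ut,\vt) \subseteq \Tlinp(t,\ut,\vt)$, for every admissible selection $\setPt \subseteq \setDt$.

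Next I would take the union over all branch selections $\setPt$ and invoke the two decompositions \eqref{eq:decomp-cones-mpec}, namely $\Tmpec = \bigcup_{\setPt} \Tp$ and $\Tlinmpec = \bigcup_{\setPt} \Tlinp$. Since a union of subset relations is again a subset relation, combining these immediately yields the first inclusion $\Tmpec(t,\ut,\vt) \subseteq \Tlinmpec(t,\ut,\vt)$.

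Finally, for the dual statement I would simply dualize the first inclusion: the polar (dual) cone operation reverses set inclusions, so from $\Tmpec(t,\ut,\vt) \subseteq \Tlinmpec(t,\ut,\vt)$ one obtains $\Tmpec(t,\ut,\vt)^* \supseteq \Tlinmpec(t,\ut,\vt)^*$ with no further computation.

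There is essentially no obstacle here, as every nontrivial ingredient is already at hand: the smoothness of the branch problems follows from \cref{def:branch-mpec}, and the cone decompositions \eqref{eq:decomp-cones-mpec} are established (with a proof in \cite{FlegelDiss}). The argument therefore collapses to the same two-line combination used in \cref{le:i-cones}. The only point warranting a moment's care is verifying that the active complementarity indices have genuinely been frozen in \eqref{eq:branch-mpec}, so that the branch is an honest smooth NLP to which the classical tangent-cone/linearized-cone inclusion applies; this is immediate from the construction of the branch problems.
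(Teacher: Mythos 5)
Your argument is correct and is essentially the approach the paper intends: it simply cites \cite{FlegelDiss} for these inclusions, and the proof given there (and the one the paper itself writes out for the parallel abs-normal result, \cref{le:i-cones}) is exactly your combination of the smooth-NLP inclusion $\Tp\subseteq\Tlinp$ for each branch with the decomposition \eqref{eq:decomp-cones-mpec}, followed by dualization. No gaps.
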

\begin{proof}
A proof may be found in \cite{FlegelDiss}.
\end{proof}
\fi

In general, the converses do not hold. This motivates the definition of MPCC-ACQ and MPCC-GCQ.

\ifCmp
\begin{definition}
  [Abadie's and Guignard's Constraint Qualifications for \eqref{eq:i-mpec}, see \cite{FlegelDiss}]
  \label{def:mpec-acq}
  \label{def:mpec-gcq}
  Consider a feasible point $(t,\ut,\vt)$ of \eqref{eq:i-mpec}.
  We say that \emph{Abadie's Constraint Qualification for MPCC (MPCC-ACQ)}
  holds at $(t,\ut,\vt)$ if $\Tmpec(t,\ut,\vt) = \Tlinmpec(t,\ut,\vt)$,
  and that \emph{Guignard's Constraint Qualification for MPCC (MPCC-GCQ)}
  holds at $(t,\ut,\vt)$ if $\Tmpec(t,\ut,\vt)^* = \Tlinmpec(t,\ut,\vt)^*$.
\end{definition}
The decomposition \eqref{eq:decomp-cones-mpec} and its dualization imply
that both MPCC-CQs hold if the corresponding CQ holds for all branch problems.
\begin{theorem}[ACQ/GCQ for all \eqref{eq:branch-mpec}
  implies MPCC-ACQ/MPCC-GCQ for \eqref{eq:i-mpec}]
  \label{th:branch-acq_mpec-acq}
  \label{th:branch-gcq_mpec-gcq}
  Consider a feasible point $(t,\ut,\vt)$ of \eqref{eq:i-mpec}.
  Then, MPCC-ACQ respectively MPCC-GCQ holds at $(t,\ut,\vt)$ if
  ACQ respectively GCQ holds for all \eqref{eq:branch-mpec} at $(t,\ut,\vt)$.
\end{theorem}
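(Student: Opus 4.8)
The plan is to obtain both claims directly from the MPCC cone decomposition \eqref{eq:decomp-cones-mpec}, in complete analogy with the way \cref{th:branch-acq_akq} followed for \eqref{eq:i-anf} from \cref{le:decomp-cones-anf}. Throughout I would use that the inclusions $\Tmpec(t,\ut,\vt) \subseteq \Tlinmpec(t,\ut,\vt)$ and $\Tmpec(t,\ut,\vt)^* \supseteq \Tlinmpec(t,\ut,\vt)^*$ always hold, so that in each of the two parts only the reverse inclusion remains to be established.

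For the Abadie part I would note that each branch problem \eqref{eq:branch-mpec} is a smooth NLP, so ACQ for all of them means precisely $\Tp(t,\ut,\vt) = \Tlinp(t,\ut,\vt)$ for every admissible $\setPt \subseteq \setDt$. Substituting these equalities into the two halves of \eqref{eq:decomp-cones-mpec} then gives
\begin{equation*}
  \Tlinmpec(t,\ut,\vt) = \bigcup_{\setPt} \Tlinp(t,\ut,\vt) = \bigcup_{\setPt} \Tp(t,\ut,\vt) = \Tmpec(t,\ut,\vt),
\end{equation*}
which is the missing inclusion, hence MPCC-ACQ.

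For the Guignard part I would first dualize \eqref{eq:decomp-cones-mpec}. The key step is the elementary fact that the polar of a union of cones is the intersection of the polars, $\bigl(\bigcup_i K_i\bigr)^* = \bigcap_i K_i^*$, valid for any family since $\omega$ lies in the left polar if and only if $\omega^T d \ge 0$ for every $d$ in each $K_i$. Applying this to both equalities in \eqref{eq:decomp-cones-mpec} yields $\Tmpec(t,\ut,\vt)^* = \bigcap_{\setPt} \Tp(t,\ut,\vt)^*$ and $\Tlinmpec(t,\ut,\vt)^* = \bigcap_{\setPt} \Tlinp(t,\ut,\vt)^*$. Since GCQ for all branch problems means $\Tp(t,\ut,\vt)^* = \Tlinp(t,\ut,\vt)^*$ for every $\setPt$, intersecting these identities over all $\setPt$ gives $\Tmpec(t,\ut,\vt)^* = \Tlinmpec(t,\ut,\vt)^*$, i.e.\ MPCC-GCQ.

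I do not expect any serious obstacle: once \eqref{eq:decomp-cones-mpec} is in hand, both statements reduce to elementary set manipulations, exactly as in the NLP case. The only point warranting a little care is the dualization in the Guignard part, where one relies on passing to polars turning the finite union of branch cones into a finite intersection of their polars; because the number of branch problems is finite this is unproblematic and requires no convexity or closedness hypothesis on the individual cones.
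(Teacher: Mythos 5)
Your proposal is correct and follows exactly the paper's route: the paper proves both claims by invoking the decomposition \eqref{eq:decomp-cones-mpec} directly for the Abadie part and ``by dualization'' for the Guignard part, which is precisely the substitution of branch-cone equalities into the unions and the identity $\bigl(\bigcup_{\setPt} K_{\setPt}\bigr)^* = \bigcap_{\setPt} K_{\setPt}^*$ that you spell out. No gaps.
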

\else
\begin{definition}[Abadie's Constraint Qualification for \eqref{eq:i-mpec}]\label{def:mpec-acq}
Consider a feasible point $(t,\ut,\vt)$ of \eqref{eq:i-mpec}.
We say that \emph{Abadie's Constraint Qualification for MPCC (MPCC-ACQ)} holds for \eqref{eq:i-mpec} at
$(t,\ut,\vt)$ if $\Tmpec(t,\ut,\vt)=\Tlinmpec(t,\ut,\vt)$.
\end{definition}

\begin{definition}[Guignard's Constraint Qualification for \eqref{eq:i-mpec}]\label{def:mpec-gcq}
Consider a feasible point $(\^t,\hut,\hvt)$ of \eqref{eq:i-mpec}.
We say that \emph{Guignard's Constraint Qualification for MPCC (MPCC-GCQ)} holds for \eqref{eq:i-mpec} at
$(t,\ut,\vt)$ if $\Tmpec(t,\ut,\vt)^*=\Tlinmpec(t,\ut,\vt)^*$.
\end{definition}

Both MPCC-CQs hold if the corresponding CQ holds for all branch problems.

\begin{theorem}[ACQ for all \eqref{eq:branch-mpec} implies MPCC-ACQ for \eqref{eq:i-mpec}]\label{th:branch-acq_mpec-acq}
 Consider a feasible point $(t,\ut,\vt)$ of \eqref{eq:i-mpec} with associated branch problems \eqref{eq:branch-mpec}.
 Then, MPCC-ACQ holds for \eqref{eq:i-mpec} at $(t,\ut,\vt)$ if ACQ holds for all \eqref{eq:branch-mpec} at $(t,\ut,\vt)$.
\end{theorem}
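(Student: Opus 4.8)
The plan is to mirror the proof of \cref{th:branch-acq_akq} and reduce the claim to the cone decompositions recorded in \eqref{eq:decomp-cones-mpec}. The starting observation is that each branch problem \eqref{eq:branch-mpec} is a \emph{smooth} NLP, so for a fixed index set $\setPt \subseteq \setDt$ the hypothesis that ACQ holds at $(t,\ut,\vt)$ is, by definition of ACQ for smooth problems, precisely the equality $\Tp(t,\ut,\vt) = \Tlinp(t,\ut,\vt)$ of the standard \tcone and the standard linearized cone.

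First I would assume this equality for \emph{every} admissible choice of $\setPt \subseteq \setDt$. I would then take the union over all $\setPt$ on both sides, obtaining $\bigcup_{\setPt} \Tp(t,\ut,\vt) = \bigcup_{\setPt} \Tlinp(t,\ut,\vt)$. Next I would invoke the two decompositions $\Tmpec(t,\ut,\vt) = \bigcup_{\setPt} \Tp(t,\ut,\vt)$ and $\Tlinmpec(t,\ut,\vt) = \bigcup_{\setPt} \Tlinp(t,\ut,\vt)$ from \eqref{eq:decomp-cones-mpec}, which collapse the left- and right-hand unions to $\Tmpec$ and $\Tlinmpec$, respectively. This yields $\Tmpec(t,\ut,\vt) = \Tlinmpec(t,\ut,\vt)$, which is exactly MPCC-ACQ in the sense of \cref{def:mpec-acq}, completing the argument.

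I do not expect a genuine obstacle here: the entire content of the statement is carried by the decomposition \eqref{eq:decomp-cones-mpec}, just as \cref{th:branch-acq_akq} rests on \cref{le:decomp-cones-anf}. The only points worth recording explicitly are that the decompositions are \emph{exact} set equalities rather than mere inclusions, and that the family of branch index sets $\setPt \subseteq \setDt$ is finite, so that forming the union and exchanging it with the equalities causes no difficulty. Accordingly, the proof is a direct corollary and does not require the more delicate sign-exploitation or homeomorphism arguments used in, e.g., \cref{th:akq} or \cref{le:hom-T-i}.
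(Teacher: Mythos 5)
Your proof is correct and follows exactly the paper's route: the paper likewise derives this theorem directly from the decomposition \eqref{eq:decomp-cones-mpec}, taking the union of the branchwise equalities $\Tp = \Tlinp$ over all $\setPt \subseteq \setDt$ to obtain $\Tmpec = \Tlinmpec$. The remarks about exactness of the decomposition and finiteness of the family are fine but not essential to the union step itself.
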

\begin{proof}
 This follows directly from \cref{le:decomp-cones-mpec}.
\end{proof}

\begin{theorem}[GCQ for all \eqref{eq:branch-mpec} implies MPCC-GCQ for \eqref{eq:i-mpec}]\label{th:branch-gcq_mpec-gcq}
Consider a feasible point $(t,\ut,\vt)$ of \eqref{eq:i-mpec} with associated branch problems \eqref{eq:branch-mpec}.
 Then, MPCC-GCQ holds for \eqref{eq:i-mpec} at $(t,\ut,\vt)$ if GCQ holds for all \eqref{eq:branch-mpec} at $(t,\ut,\vt)$.
\end{theorem}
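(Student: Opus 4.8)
The plan is to obtain MPCC-GCQ directly from the cone decompositions in \eqref{eq:decomp-cones-mpec} by dualizing, exactly in the manner that \cref{th:branch-gcq_gkq} is deduced from the corresponding abs-normal decompositions. Since the inclusion $\Tmpec(t,\ut,\vt) \subseteq \Tlinmpec(t,\ut,\vt)$ holds unconditionally, its dual $\Tmpec(t,\ut,\vt)^* \supseteq \Tlinmpec(t,\ut,\vt)^*$ is automatic; hence the task reduces to establishing the reverse inclusion $\Tmpec(t,\ut,\vt)^* \subseteq \Tlinmpec(t,\ut,\vt)^*$ under the hypothesis that GCQ holds for every branch problem \eqref{eq:branch-mpec}.

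First I would record the elementary duality rule for a union of cones, namely that a linear functional lies in the dual of the union precisely when it is nonnegative on each member, so that $\bigl(\bigcup_{\setPt} C_{\setPt}\bigr)^* = \bigcap_{\setPt} C_{\setPt}^*$. Applying this to both identities in \eqref{eq:decomp-cones-mpec} gives
\[
  \Tmpec(t,\ut,\vt)^* = \bigcap_{\setPt} \Tp(t,\ut,\vt)^*
  \qtextq{and}
  \Tlinmpec(t,\ut,\vt)^* = \bigcap_{\setPt} \Tlinp(t,\ut,\vt)^*,
\]
where the intersections run over the finitely many index sets $\setPt \subseteq \setDt$.

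The hypothesis states that GCQ holds for every branch problem \eqref{eq:branch-mpec}, i.e.\ $\Tp(t,\ut,\vt)^* = \Tlinp(t,\ut,\vt)^*$ for each admissible $\setPt$. Intersecting these equalities over all $\setPt$ then yields $\Tmpec(t,\ut,\vt)^* = \Tlinmpec(t,\ut,\vt)^*$, which is exactly MPCC-GCQ, and the argument is complete.

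I do not expect a genuine obstacle here: the decomposition \eqref{eq:decomp-cones-mpec} already carries all the geometric content, and what remains is purely formal. The only point deserving a word of justification is the duality rule for the union, and it is worth emphasizing that $\bigl(\bigcup_{\setPt} C_{\setPt}\bigr)^* = \bigcap_{\setPt} C_{\setPt}^*$ holds for \emph{any} family of cones, with no convexity or closure assumptions, since the defining inequality must simply be satisfied on each member separately. This makes the dualization immediate and mirrors precisely the passage already used to deduce \cref{th:branch-gcq_gkq}.
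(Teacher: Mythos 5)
Your proposal is correct and follows essentially the same route as the paper: the paper likewise obtains MPCC-GCQ by dualizing the decomposition \eqref{eq:decomp-cones-mpec}, using that the dual of a finite union of cones is the intersection of the duals and that GCQ for each branch identifies $\Tp(t,\ut,\vt)^*$ with $\Tlinp(t,\ut,\vt)^*$. Your explicit remark that the union--intersection duality needs no convexity or closure assumptions is a welcome clarification of a step the paper leaves implicit.
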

\begin{proof}
 This follows directly from \cref{le:decomp-cones-mpec} by dualization.
\end{proof}
\fi

\subsection{Counterpart MPCC for the Abs-Normal NLP with Inequality Slacks}\label{subsec:e-mpec}

\ifCmp
By \cref{def:i-mpec},
the \emph{counterpart MPCC} of the non-smooth NLP \eqref{eq:e-anf} reads:
\else
Using the same approach as in the preceding paragraph, we restate the counterpart MPCC of \eqref{eq:e-anf}.

\begin{definition}[Counterpart MPCC of \eqref{eq:e-anf}]
  The \emph{counterpart MPCC} of the non-smooth NLP \eqref{eq:e-anf} reads:
\fi
  \begin{align*}
     \sminst[t,w,\ut,\vt,\uw,\vw]{f(t)}
     & \ce(t,\ut+\vt)=0,\\
     & \ci(t,\ut+\vt) - (\uw+\vw) = 0, \\
     & \cz(t,\ut+\vt) - (\ut-\vt) = 0,\tag{E-MPCC} \label{eq:e-mpec}\\
     & w - (\uw-\vw) = 0, \\
     & 0 \le \ut \compl \vt \ge 0, \quad 0 \le \uw \compl \vw \ge 0,
   \end{align*}
   where $\ut, \vt \in \R^{s_t}$ and $\uw, \vw \in \R^{m_2}$.
   The feasible set is denoted by $\Fempec$ and is a lifting of $\Fmpec$.
\ifCmp\else
\end{definition}
\fi

Clearly, the homeomorphism between $\Fmpec$ and $\Fabs$
extends to $\Fempec$ and $\Feabs$. It is given by
  \begin{align*}
    \_\phi(t, w, \ut, \vt, \uw, \vw) &= (t, w, \ut - \vt, \uw - \vw),\\
    \Inv{\_\phi}(t, w, \zt, \zw) &= (t, w, [\zt]^+, [\zt]^-, [\zw]^+, [\zw]^-).
  \end{align*}

Just like in the abs-normal case, problem \eqref{eq:e-mpec} is a special case of \eqref{eq:i-mpec}.
\ifCmp
Hence, we obtain the following material by specializing the definitions
and results for \eqref{eq:i-mpec}.

By \cref{def:cones-i-mpec}, the \tcone to $\Fempec$ at $y$ reads
\else
Hence, we obtain the next lemmas from the corresponding definitions and lemmas for \eqref{eq:i-mpec}.

\begin{lemma}[\TCone and MPCC-Linearized Cone for \eqref{eq:e-mpec}]\label{le:cones-e-mpec}
Consider a feasible point $y = (t,w, \ut,\vt,\uw,\vw)$ of \eqref{eq:e-mpec}.
The \tcone to $\Fempec$ at $y$ reads
\fi
\begin{equation*}
  \Tempec(y) =
  \begin{defarray}{\delta}
    \exists \tau_k \searrow 0,\
    \Fempec \ni y_k = (t_k, w_k, \ut_k, \vt_k, \uw_k, \vw_k) \to y{:} \\
    \tau_k^{-1} (y_k - y) \to
    \delta = (\delta t, \delta w, \delta\ut, \delta\vt, \delta\uw, \delta\vw)
  \end{defarray}
  .
\end{equation*}
The MPCC-linearized cone reads
\begin{equation*}
  \Tlinempec(\^y) =
  \ifcase0
  \begin{defarray} \delta
    \partial_1\ci \delta t + \partial_2\ci (\delta\ut + \delta\vt)
    = \delta\uw + \delta\vw, \
    \delta w = \delta\uw - \delta\vw, \\
    (\delta t, \delta \ut \delta \vt) \in \Tlinmpec(t,\ut,\vt), \
    (\delta \uw, \delta \vw) \in \Tcompl(\huw, \hvw)
  \end{defarray}
  \or
  \begin{defarray}[r@{\medspace}l]{\delta}
    (\delta t, \delta \ut \delta \vt) &\in \Tlinmpec(t,\ut,\vt), \\
    \partial_1\ci \delta t + \partial_2\ci (\delta\ut + \delta\vt)
    &= \delta\uw + \delta\vw, \\
    \delta w &= \delta\uw - \delta\vw, \\
    (\delta \uw, \delta \vw) &\in \Tcompl(\huw, \hvw)
  \end{defarray}
  \fi
  .
\end{equation*}
Here, all partial derivatives are evaluated at $(t,\ut+\vt)$.
\ifCmp
The associated homeomorphisms of \cref{le:hom-T-i},
\else
\end{lemma}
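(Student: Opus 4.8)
The plan is to exploit that \eqref{eq:e-mpec} is a special case of \eqref{eq:i-mpec}, exactly as \eqref{eq:e-anf} was identified with an instance of \eqref{eq:i-anf} in \cref{sec:anf-equalities}. First I would make the identification explicit: I take the base variable of \eqref{eq:i-mpec} to be the pair $(t,w)$ and the complementarity variables to be the stacked pairs $u=(\ut,\uw)$ and $v=(\vt,\vw)$. The equality block of \eqref{eq:i-mpec} then collects both $\ce(t,\ut+\vt)$ and the slack row $\ci(t,\ut+\vt)-(\uw+\vw)$, the switching block collects both $\cz(t,\ut+\vt)-(\ut-\vt)$ and $w-(\uw-\vw)$, and the inequality block of \eqref{eq:i-mpec} is empty, so the active set $\setA$ is void and its row in the linearized cone is vacuous. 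With this dictionary in place everything reduces to reading off \cref{def:cones-i-mpec}.

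For the tangent cone there is nothing to compute: by \cref{def:cones-i-mpec} it is the set of limits of scaled difference quotients of sequences in $\Fempec$, and spelling out the six coordinate blocks of $\delta$ yields exactly the asserted description of $\Tempec(y)$.

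For the MPCC-linearized cone I would substitute the identified data into the formula of \cref{def:cones-i-mpec} and then regroup the resulting conditions. The equality row splits into $\partial_1\ce\,\delta t+\partial_2\ce(\delta\ut+\delta\vt)=0$ and $\partial_1\ci\,\delta t+\partial_2\ci(\delta\ut+\delta\vt)=\delta\uw+\delta\vw$; the switching row splits into $\partial_1\cz\,\delta t+\partial_2\cz(\delta\ut+\delta\vt)=\delta\ut-\delta\vt$ and $\delta w=\delta\uw-\delta\vw$; and the complementarity cone $\Tcompl$ of the stacked pair $(u,v)$ factors as $\Tcompl(\ut,\vt)\times\Tcompl(\huw,\hvw)$. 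Assembling the $\ce$-row, the first switching row, and the $(\ut,\vt)$-complementarity conditions is precisely the defining system of $\Tlinmpec(t,\ut,\vt)$, so the full cone abbreviates to the membership $(\delta t,\delta\ut,\delta\vt)\in\Tlinmpec(t,\ut,\vt)$ together with the slack equality, the relation $\delta w=\delta\uw-\delta\vw$, and $(\delta\uw,\delta\vw)\in\Tcompl(\huw,\hvw)$, which is the claimed form.

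The only step requiring genuine care, and hence the main potential obstacle, is the factorization of the complementarity cone across the two blocks. One must verify that the active-index classification $\setUt_+,\setVt_+,\setDt$ of the stacked pair $(u,v)$ decomposes blockwise into the classifications of $(\ut,\vt)$ and $(\uw,\vw)$. This holds because the two complementarity systems $0\le\ut\compl\vt\ge0$ and $0\le\uw\compl\vw\ge0$ act on disjoint coordinate ranges, so their index sets do not interact and $\Tcompl$ splits as a Cartesian product; the remainder of the argument is pure bookkeeping of variable names against \cref{def:cones-i-mpec}.
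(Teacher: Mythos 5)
Your proposal is correct and follows essentially the same route as the paper, which obtains this lemma by viewing \eqref{eq:e-mpec} as a special case of \eqref{eq:i-mpec} and specializing \cref{def:cones-i-mpec}; you merely spell out the variable identification, the row splitting, and the blockwise factorization of the complementarity cone that the paper leaves implicit.
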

\begin{proof}
 This follows from \cref{def:cones-i-mpec}.
\end{proof}

\begin{lemma}\label{le:hom-T-e}
  Given \eqref{eq:e-anf} with counterpart \eqref{eq:e-mpec}.
  Consider $(t, w, \zt, \zw) \in \Feabs$ and $(t, w, \ut, \vt, \uw, \vw)\in\_\phi^{-1}(t, w, \zt, \zw)\in\Fempec$.
  Define
  \fi
  \begin{gather*}
    \_\psi\: \Tempec(t, w, \ut, \vt, \uw, \vw) \to \Teabs(t, w, \zt, \zw), \\
    \_\psi\: \Tlinempec(t, w, \ut, \vt, \uw, \vw) \to \Tlineabs(t, w, \zt, \zw),
  \end{gather*}
  \ifCmp now take the form\else as\fi
  \begin{align*}
    \_\psi(\delta t, \delta w, \delta \ut, \delta \vt, \delta \uw, \delta \vw) &= (\delta t, \delta w, \delta \ut - \delta \vt, \delta \uw - \delta \vw),\\
    \Inv{\_\psi}(\delta t, \delta w, \delta \zt, \delta \zw) &= (\delta t, \delta w, \delpos\zt, \delneg\zt, \delpos\zw, \delneg\zw).
  \end{align*}
\ifCmp\else
  Then, both functions $\_\psi$ are homeomorphisms.
\end{lemma}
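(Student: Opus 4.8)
The plan is to obtain both homeomorphism claims by specializing \cref{le:hom-T-i}, exactly as \eqref{eq:e-mpec} was obtained as a special case of \eqref{eq:i-mpec}. First I would make the reduction explicit: writing $x=(t,w)$, $z=(\zt,\zw)$, $u=(\ut,\uw)$, and $v=(\vt,\vw)$, the two complementarity systems $0\le\ut\compl\vt\ge0$ and $0\le\uw\compl\vw\ge0$ of \eqref{eq:e-mpec} combine into the single system $0\le u\compl v\ge0$, and the four equality blocks collapse into the switching and constraint structure of \eqref{eq:i-mpec}. Under this identification the homeomorphism $\_\phi$ between $\Fempec$ and $\Feabs$ coincides with the homeomorphism $\phi$ of \cref{le:hom-T-i}, so its hypotheses are met.

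Next I would check that the relevant index sets split over the two complementarity blocks. Since $\setUt_+$, $\setVt_+$, and $\setDt$ are defined index by index, the combined index set $\{1,\dots,s_t+m_2\}$ partitions into a $\zt$-block and a $\zw$-block, and the complementarity cone factors as $\Tcompl(\ut,\vt)\x\Tcompl(\uw,\vw)$. Consequently the maps $\psi$ and $\Inv\psi$ supplied by \cref{le:hom-T-i} specialize componentwise to the stated $\_\psi$ and $\Inv{\_\psi}$: the assignment $\delta z\mapsto(\delpos z,\delneg z)$ applied to $z=(\zt,\zw)$ returns $(\delpos\zt,\delneg\zt,\delpos\zw,\delneg\zw)$, while $\psi$ returns $\delta u-\delta v=(\delta\ut-\delta\vt,\delta\uw-\delta\vw)$, and the $(t,w)$-components are carried through unchanged.

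The only discrepancy between the two settings is the ordering of coordinates: \cref{le:hom-T-i} groups all $u$-components before all $v$-components, whereas \eqref{eq:e-mpec} interleaves the blocks as $(\delta\ut,\delta\vt,\delta\uw,\delta\vw)$. Since this reordering is a fixed linear permutation of $\R^{2s_t+2m_2}$, hence a homeomorphism, composing it with the maps of \cref{le:hom-T-i} yields the claimed homeomorphisms $\_\psi$ on both the tangent cones and the linearized cones. I expect the work to be purely bookkeeping; the only point requiring care is confirming that $\Tempec$ and $\Tlinempec$, with their interleaved variable order and the explicit factor $\Tcompl(\huw,\hvw)$, coincide after this permutation with the cones $\Tmpec$ and $\Tlinmpec$ arising from the identification. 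Once the product structure of $\Tcompl$ and the permutation are fixed, the homeomorphism property transfers directly, and none of the limiting or sign arguments from the proof of \cref{le:hom-T-i} need be repeated.
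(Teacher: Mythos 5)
Your proposal is correct and follows exactly the paper's route: the paper proves this lemma by the one-line remark that it follows directly from \cref{le:hom-T-i}, after having observed that \eqref{eq:e-mpec} is a special case of \eqref{eq:i-mpec} under the identification $x=(t,w)$, $z=(\zt,\zw)$. Your write-up merely makes explicit the bookkeeping (block splitting of the index sets, factorization of the complementarity cone, and the coordinate permutation) that the paper leaves implicit.
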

\begin{proof}
  This follows directly from \cref{le:hom-T-i}.
\end{proof}

\begin{lemma}[Branch NLPs for \eqref{eq:e-mpec}]\label{def:branch-e-mpec}
\fi
 Given $\^y = (\^t,\^w,\hut,\hvt,\huw,\hvw)$, a feasible point of \eqref{eq:e-mpec} with associated index sets $\setUt_+$, $\setVt_+$, $\setDt$, $\setUw_+$, $\setVw_+$, and $\setDw$,
 choose $\setPt\subseteq\setDt$ as well as $\setPw\subseteq\setDw$ and set $\setPtw=\setPt\cup\setPw$.
 The branch problem NLP($\setPtw$)
 \ifCmp of \cref{def:branch-mpec} then \fi reads
  \begin{align*}
    \sminst[t,w,\ut,\vt,\uw, \vw]{f(t)}
    & \ce(t,\ut+\vt) = 0, \quad
    \ci(t,\ut+\vt) - (\uw+\vw) = 0, \\
    & \cz(t,\ut+\vt) - (\ut-\vt) = 0, \quad
    w - (\uw-\vw) = 0, \\
    & 0 = \ut_i,\ 0 \le \vt_i, \ i\in \setVt_+\cup\setPt, \\
    & 0 \le \ut_i,\ 0 = \vt_i, \ i\in \setUt_+\cup\bar\setPt,
    \tag{NLP($\setPtw$)}\label{eq:branch-e-mpec}\\
    & 0 = \uw_i,\ 0 \le \vw_i, \ i\in \setVw_+\cup\setPw,\\
    & 0 \le \uw_i,\ 0 = \vw_i, \ i\in \setUw_+\cup\bar\setPw.
  \end{align*}
  The feasible set of \eqref{eq:branch-e-mpec},
  which always contains $\^y$, is denoted by $\Fep$ and is a lifting of $\Fp$.
\ifCmp\else
\end{lemma}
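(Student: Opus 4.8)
The plan is to derive the lemma by specializing \cref{def:branch-mpec} to \eqref{eq:e-mpec}, which---exactly as \eqref{eq:e-anf} is a special case of \eqref{eq:i-anf}---is a special case of \eqref{eq:i-mpec}. First I would fix the identification: the generic abs-normal variable of \eqref{eq:i-mpec} is taken to be $(t,w)$, and the generic complementarity pair is taken to be the stacked pair $(\ut,\uw)$ and $(\vt,\vw)$, so that the two systems $0 \le \ut \compl \vt \ge 0$ and $0 \le \uw \compl \vw \ge 0$ together play the role of the single complementarity system of \eqref{eq:i-mpec}. The generic data $\bcE$ and $\bcZ$ are then the ones already recorded for \eqref{eq:e-anf}.

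The key structural observation is that the index sets decompose blockwise. Since the complementarity constraints of the $\zt$- and $\zw$-blocks are decoupled, the active, inactive, and degenerate index sets of the stacked system are disjoint unions of their $t$- and $w$-parts; in particular the degenerate set is $\setDt \cup \setDw$. Consequently, the subset selecting a branch in \cref{def:branch-mpec} decomposes as $\setPtw = \setPt \cup \setPw$, and choosing it is equivalent to the two independent choices $\setPt \subseteq \setDt$ and $\setPw \subseteq \setDw$ of the statement. I expect this bijection between a subset of the stacked degenerate set and the pairs $(\setPt,\setPw)$ to be the only genuinely delicate point; everything else is bookkeeping.

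Next I would substitute the concrete data of \eqref{eq:e-mpec} into the generic branch problem of \cref{def:branch-mpec}. With $\bcE(x,u+v) = (\ce(t,\ut+\vt),\, \ci(t,\ut+\vt)-(\uw+\vw))$, $\bcZ(x,u+v) = (\cz(t,\ut+\vt),\, w)$, and $z = (\ut-\vt,\, \uw-\vw)$, the single equality and switching equations split into the four equations of \eqref{eq:branch-e-mpec}, while the generic sign-and-complementarity conditions separate into the displayed $t$- and $w$-block conditions. This reproduces NLP($\setPtw$) as stated, and feasibility of $\^y$ is immediate because, by \cref{def:branch-mpec}, the base point always lies in its own branch feasible set.

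Finally, to see that $\Fep$ is a lifting of $\Fp$, I would argue that this is merely the branch-level restriction of the lifting $\Fempec$ of $\Fmpec$: the slack block $(w,\uw,\vw)$ is pinned down by the remaining variables via $\uw + \vw = \ci(t,\ut+\vt) \ge 0$---which recovers the inequality of \eqref{eq:branch-mpec}---and $w = \uw - \vw$, up to the free sign split of $\ci$ on its active components. Hence projecting out $(w,\uw,\vw)$ maps $\Fep$ onto $\Fp$, which is exactly the asserted lifting relation.
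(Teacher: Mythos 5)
Your proposal is correct and follows essentially the same route as the paper: the text explicitly treats \eqref{eq:e-mpec} as a special case of \eqref{eq:i-mpec} (with $x=(t,w)$, stacked complementarity pairs, and the data $\bcE,\bcZ$ already recorded for \eqref{eq:e-anf}) and obtains the lemma by specializing \cref{def:branch-mpec}, which is exactly your argument, including the blockwise split $\setDt\cup\setDw$ of the degenerate set that makes the choice of $\setPtw$ equivalent to the independent choices $\setPt\subseteq\setDt$, $\setPw\subseteq\setDw$. Your additional remarks on feasibility of $\^y$ and on $\Fep$ being the branch-level restriction of the lifting $\Fempec\to\Fmpec$ are consistent with the paper's (terser) treatment.
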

\begin{proof}
This follows from \cref{def:branch-mpec}.
\end{proof}
\fi

Again, the homeomorphism between feasible sets can be restricted to the \ifnum\FmtChoice=2 respective \fi branch problems \eqref{eq:branch-e-anf} and \eqref{eq:branch-e-mpec} where $\setPt=\{i\in\alpt(\^t)\colon \sigt_i=-1 \}$ and $\setPw=\{i\in\alpw(\^w)\colon \sigw_i=-1 \}$.
Thus, the mapping $\_\phi_{\setPtw}\: \Fep \to \Fesig$ given as
\begin{equation*}
\_\phi_{\setPtw}\define \_\phi\vert_{\setPtw} \qtextq{and} \_\phi^{-1}_{\setPtw}\define \_\phi^{-1}\vert_{\Sigtw}
\end{equation*}
is a homeomorphism.

\ifCmp\else
\begin{lemma}[\TCone and Linearized Cone for \eqref{eq:branch-e-mpec}]\label{def:cones-branch-e-mpec}
 Consider a feasible point $y=(t,w,\ut,\vt,\uw,\vw)$ of \eqref{eq:branch-e-mpec}.
\fi
 The \tcone to $\Fep$ at $y$ reads
  \begin{equation*}
    \Tep(y) =
    \begin{defarray}{\delta}
      \exists \tau_k \searrow 0,\ \Fep \ni (t_k,w_k,\ut_k,\vt_k,\uw_k,\vw_k)
      \to (t,w,\ut,\vt,\uw,\vw){:} \\
      \tau_k^{-1} (t_k - t, w_k-w, \ut_k - \ut, \vt_k - \vt, \uw_k - \uw, \vw_k - \vw) \to \delta
    \end{defarray}
  \end{equation*}
  where
  $\delta = (\delta t, \delta w, \delta\ut, \delta\vt, \delta\uw, \delta\vw)$.
  The linearized cone reads
  \begin{equation*}
    \Tlinep(y) =
    \begin{defarray}{\delta}
      (\delta t,\delta \ut, \delta \vt)\in\Tlinp, \\
      \partial_1 \ci \delta t +
      \partial_2 \ci (\delta\ut + \delta\vt) = \delta\uw + \delta\vw, \
      \delta w = \delta \uw - \delta \vw, \\
      0 = \delta\uw_i \ \text{for} \ i\in\setVw_+\cup\setPw,\ 0 = \delta\vw_i \ \text{for} \ i\in\setUw_+\cup\bar\setPw,\\
      0 \le \delta \uw_i \ \text{for} \ i\in \bar{\setPw},\  0 \le \delta \vw_i \ \text{for} \ i\in\setPw
    \end{defarray}
    .
  \end{equation*}
  Here, all partial derivatives are evaluated at $(t, \ut + \vt)$.
\ifCmp
The associated cone homeomorphisms of \cref{le:hom-T-branch-i}
are now obtained as follows.
\else
\end{lemma}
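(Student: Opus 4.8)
The plan is to read off both cones by specializing the generic MPCC branch-problem constructions to \eqref{eq:branch-e-mpec}, in the same way that \eqref{eq:e-mpec} was recognized as a special case of \eqref{eq:i-mpec}. First I would fix the identification of data: take $x=(t,w)$ as the smooth variable, collect the complementarity pairs as $(\ut,\uw)$ and $(\vt,\vw)$, set the equality functions to $\bcE=(\ce,\ci-(\uw+\vw))$ and the switching functions to $\bcZ=(\cz,w)$, and observe that \eqref{eq:branch-e-mpec} carries no inequality constraint. Under this identification the index sets split as $\setU_+=\setUt_+\cup\setUw_+$, $\setV_+=\setVt_+\cup\setVw_+$ and $\setD=\setDt\cup\setDw$, the branch is selected by $\setP=\setPtw=\setPt\cup\setPw$, and \eqref{eq:branch-e-mpec} becomes literally an instance of \eqref{eq:branch-mpec}.

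For the tangent cone there is nothing to prove: $\Tep(y)$ is by definition the Bouligand tangent cone to $\Fep$ at $y$, and the displayed set is exactly that definition written out in the variables $(t,w,\ut,\vt,\uw,\vw)$. I would simply record this.

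For the linearized cone I would start from the linearized cone $\Tlinp$ of the branch problem \eqref{eq:branch-mpec} (itself the specialization of \cref{def:cones-i-mpec} through the branch structure) and linearize the constraints of \eqref{eq:branch-e-mpec} block by block. Since $\ce(t,\ut+\vt)=0$ and $\cz(t,\ut+\vt)-(\ut-\vt)=0$ involve neither $w$ nor the slack pair $(\uw,\vw)$, their linearizations, together with the complementarity conditions on $(\ut,\vt)$, reproduce precisely the defining relations of $\Tlinp$, which is what licenses writing $(\delta t,\delta\ut,\delta\vt)\in\Tlinp$. The remaining constraints linearize directly: $\ci(t,\ut+\vt)-(\uw+\vw)=0$ yields $\partial_1\ci\,\delta t+\partial_2\ci\,(\delta\ut+\delta\vt)=\delta\uw+\delta\vw$, the slack identity $w-(\uw-\vw)=0$ yields $\delta w=\delta\uw-\delta\vw$, and the complementarity $0\le\uw\compl\vw\ge0$ contributes the vanishing and sign conditions on $(\delta\uw,\delta\vw)$ indexed by $\setUw_+$, $\setVw_+$, $\setPw$ and $\bar\setPw$.

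The point that needs care, and the step I expect to be the main obstacle, is the consistency of folding the whole $(t,\ut,\vt)$-block into $\Tlinp$: recall that $\Tlinp$ also carries the active-inequality row $\partial_1\cA\,\delta t+\partial_2\cA\,(\delta\ut+\delta\vt)\ge0$, which has no explicit counterpart among the constraints of \eqref{eq:branch-e-mpec}. I would resolve this exactly as in the abs-normal slack setting by noting that an inequality index is active, $i\in\setA$, precisely when $c_i(t,\ut+\vt)=\abs{w_i}=0$, i.e.\ when $i\in\setDw$; for such $i$ the linearized slack equality combined with the slack complementarity forces $\partial_1 c_i\,\delta t+\partial_2 c_i\,(\delta\ut+\delta\vt)=\delta\uw_i+\delta\vw_i\ge0$, so the $\cA$-row is implied by the relations already present and its appearance inside $\Tlinp$ neither enlarges nor shrinks the set. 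This redundancy shows that the displayed $\Tlinep(y)$ agrees with the direct specialization of the MPCC branch cone to \eqref{eq:branch-e-mpec}, completing the identification.
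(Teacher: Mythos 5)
Your proposal is correct and takes essentially the paper's own route: the paper also obtains both cones purely by specializing the branch-problem cone constructions for \eqref{eq:branch-mpec} (cf.\ \cref{def:cones-i-mpec} and \cref{def:branch-mpec}) to \eqref{eq:e-mpec} viewed as an instance of \eqref{eq:i-mpec} under the identification $x=(t,w)$, $u=(\ut,\uw)$, $v=(\vt,\vw)$, recording the tangent cone as a mere transcription of its definition. Your extra verification that the active-inequality row $\partial_1\cA\,\delta t+\partial_2\cA\,(\delta\ut+\delta\vt)\ge 0$ hidden inside the membership $(\delta t,\delta\ut,\delta\vt)\in\Tlinp$ is redundant --- since $\setA$ corresponds to $\setDw$ and the linearized slack equality plus the sign conditions on $(\delta\uw,\delta\vw)$ force $\delta\uw_i+\delta\vw_i\ge 0$ there --- is precisely the detail the paper leaves implicit, and it is right.
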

\begin{proof}
 This follows from \cref{def:cones-branch-mpec}.
\end{proof}

\begin{lemma}\label{le:hom-T-branch-e}
\fi
  Given \eqref{eq:branch-e-anf} and \eqref{eq:branch-e-mpec} with $\setPt=\defset{ i\in\alpt(\^t)}{\sigt_i=-1}$ and $\setPw=\defset{ i\in\alpw(\^w)}{\sigw_i=-1}$,
  consider $(t, w, \zt, \zw) \in \Feabs$ and $(t, w, \ut, \vt, \uw, \vw)=\_\phi^{-1}(t, w, \zt, \zw)$.
  Define $\bar\psi_{\setPtw}\define \bar\psi\vert_{\Tep}$, $\bar\psi^{-1}_{\setPtw}\define \bar\psi\vert_{\Tesig}$
  and $\bar\psi_{\setPtw}\define \bar\psi\vert_{\Tlinep}$, $\bar\psi^{-1}_{\setPtw}\define \bar\psi\vert_{\Tlinesig}$.
  Then, we have \ifCmp\else homeomorphisms\fi
  \begin{align*}
  \bar\psi_{\setPtw}\: \Tep(t, w, \ut, \vt, \uw, \vw) \to \Tesig(t, w, \zt, \zw),\\
  \bar\psi_{\setPtw}\: \Tlinep(t, w, \ut, \vt, \uw, \vw) \to \Tlinesig(t, w, \zt, \zw).
  \end{align*}
\ifCmp\else
\end{lemma}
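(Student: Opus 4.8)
The plan is to obtain this as a direct specialization of \cref{le:hom-T-branch-i}, exactly as the full-level homeomorphisms for \eqref{eq:e-mpec} were obtained from those for \eqref{eq:i-mpec}. I would regard \eqref{eq:branch-e-anf} and \eqref{eq:branch-e-mpec} as instances of \eqref{eq:branch-anf} and \eqref{eq:branch-mpec} under the identification already used for the full formulations, namely $x=(t,w)$, $z=(\zt,\zw)$, $u=(\ut,\uw)$, $v=(\vt,\vw)$, with $\bcE=(\ce,\ci-\abs{\zw})$, $\bcZ=(\cz,w)$, combined branch selection $\setPtw=\setPt\cup\setPw$, and signature $\Sigtw=\diag(\Sigt,\Sigw)$. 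Under this identification $\bar\psi_{\setPtw}$ is precisely the branch restriction of the full homeomorphism, so the task reduces to checking that \cref{le:hom-T-branch-i} applies to the combined $(t,w)$-variables.

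To apply that lemma I first need to verify its hypothesis, namely that $\setPt$ and $\setPw$ pick out exactly the degenerate indices carrying signature $-1$. Since active switching variables coincide with degenerate complementarity pairs, $\alpt(\^t)=\setDt$ and $\alpw(\^w)=\setDw$, so the combined degenerate set is $\setDt\cup\setDw$. With the given choices of $\setPt$ and $\setPw$ each block then satisfies the partition used in \cref{le:hom-T-branch-i},
\begin{align*}
  \setPt&=\defset{i\in\alpt(\^t)}{\sigt_i=-1}, & \bar\setPt&=\defset{i\in\alpt(\^t)}{\sigt_i=+1}, \\
  \setVt_+&=\defset{i\notin\alpt(\^t)}{\sigt_i=-1}, & \setUt_+&=\defset{i\notin\alpt(\^t)}{\sigt_i=+1},
\end{align*}
together with the analogous identities for $\setPw,\bar\setPw,\setVw_+,\setUw_+$ in terms of $\alpw(\^w)$ and $\sigw$.

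Once these identities are in place, \cref{le:hom-T-branch-i} simultaneously supplies the bijectivity, the block-wise inverse formula ($\delpos{\zt},\delneg{\zt}$ and $\delpos{\zw},\delneg{\zw}$), and the continuity of both directions, whence the restrictions $\bar\psi_{\setPtw}\:\Tep\to\Tesig$ and $\bar\psi_{\setPtw}\:\Tlinep\to\Tlinesig$ are homeomorphisms. The one step where I expect to spend real effort — the main obstacle, such as it is — is confirming that the additional slack-block data correspond correctly under $\bar\psi_{\setPtw}$: that the relations $\partial_1\ci\,\delta t+\partial_2\ci(\delta\ut+\delta\vt)=\delta\uw+\delta\vw$ and $\delta w=\delta\uw-\delta\vw$, together with the complementarity sign conditions on $\setPw$ and $\bar\setPw$, map onto $\partial_1\ci\,\delta t+\partial_2\ci\,\Sigt\delta\zt-\Sigw\delta\zw=0$, $\delta w=\delta\zw$, and $\sigw_i\delta\zw_i\ge 0$ for $i\in\alpw(\^w)$. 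This is routine bookkeeping, identical in form to the $\zt$-block already treated in \cref{le:hom-T-i}, and contributes no new analytic content.
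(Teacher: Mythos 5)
Your proposal is correct and follows essentially the same route as the paper, whose entire proof is that the claim follows directly from \cref{le:hom-T-branch-i} via the identification of \eqref{eq:branch-e-mpec} as a special case of \eqref{eq:branch-mpec} with combined variables $x=(t,w)$, $z=(\zt,\zw)$. Your additional verification of the index-set partition and the slack-block bookkeeping simply makes explicit what the paper leaves implicit.
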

\begin{proof}
 This follows directly from \cref{le:hom-T-branch-i}.
\end{proof}
\fi

By applying \ifCmp
\eqref{eq:decomp-cones-mpec} \else \cref{le:decomp-cones-mpec} \fi
to \eqref{eq:e-mpec} with associated branch problems \eqref{eq:branch-e-mpec},
we obtain the following decomposition of cones at $y = (t,w,\ut,\vt,\uw,\vw)$:
 \begin{equation*}
  \Tempec(y)=\bigcup_{\setPtw} \Tep(y) \qtextq{and}
  \Tlinempec(y)=\bigcup_{\setPtw} \Tlinep(y).
 \end{equation*}
Moreover, the \tcone is contained in the linearized cone
and the converse holds for the dual cones:
 \begin{equation*}
   \Tempec(y) \subseteq \Tlinempec(y) \qtextq{and}
   \Tempec(y)^* \supseteq \Tlinempec(y)^*.
 \end{equation*}
Once again, the converses do not hold in general and we consider
Abadie's and Guignard's Constraint Qualifications for \eqref{eq:e-mpec}
at $y = (t,w, \ut,\vt, \uw,\vw)$.
\ifCmp
Recalling \cref{def:mpec-acq}, MPCC-ACQ and MPCC-GCQ simply read
\begin{equation*}
  \Tempec(y) = \Tlinempec(y)
  \qtextq{and}
  \Tempec(y)^* = \Tlinempec(y)^*
  .
\end{equation*}
\else
\begin{lemma}[MPCC-ACQ for \eqref{eq:e-mpec}]
Given a feasible point $y$ of \eqref{eq:e-mpec},
MPCC-ACQ at $y$ reads $\Tempec(y)=\Tlinempec(y)$.
\end{lemma}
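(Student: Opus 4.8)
The plan is to recognize this statement as a direct specialization of the general MPCC-ACQ definition rather than a result requiring substantive argument. As the preceding text establishes, problem \eqref{eq:e-mpec} is a special case of the general counterpart MPCC \eqref{eq:i-mpec}, obtained by the identifications collected in \cref{subsec:e-mpec}. Consequently, Abadie's constraint qualification for \eqref{eq:e-mpec} is nothing but \cref{def:mpec-acq} applied to this special case, and the only content of the lemma is to record the concrete form the defining cone equality takes once the tangent and linearized cones are written in the specialized notation.

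Concretely, I would proceed in three short steps. First I would invoke \cref{def:mpec-acq}, which states that MPCC-ACQ holds at a feasible point exactly when the \tcone coincides with the MPCC-linearized cone. Second, I would note that for \eqref{eq:e-mpec} these two cones are precisely $\Tempec(y)$ and $\Tlinempec(y)$, whose explicit descriptions have already been derived earlier in \cref{subsec:e-mpec} by specializing \cref{def:cones-i-mpec} to the slack formulation. Third, substituting these specialized cones into the defining equality of \cref{def:mpec-acq} immediately yields the asserted characterization $\Tempec(y) = \Tlinempec(y)$.

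There is essentially no obstacle here: the statement is definitional, and its proof consists of a single application of \cref{def:mpec-acq} together with the cone identifications $\setT_{\text{e-mpcc}} = \Tempec$ and $\Tlinempec$ that were established when \eqref{eq:e-mpec} was introduced as a special case of \eqref{eq:i-mpec}. The only point meriting a brief remark is that these specialized cones were already verified to be the correct \tcone and linearized cone for \eqref{eq:e-mpec}, so nothing beyond invoking \cref{def:mpec-acq} is needed. Accordingly, the proof reduces to the one-line justification that the claim follows directly from \cref{def:mpec-acq}, exactly parallel to the analogous definitional lemmas stated earlier for AKQ and GKQ of \eqref{eq:e-anf}.
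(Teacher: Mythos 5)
Your proposal is correct and matches the paper's own proof, which is the one-line observation that the lemma follows from \cref{def:mpec-acq} once \eqref{eq:e-mpec} is viewed as a special case of \eqref{eq:i-mpec} with the cones $\Tempec(y)$ and $\Tlinempec(y)$ obtained by specialization. Nothing further is needed.
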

\begin{proof}
 This follows from \cref{def:mpec-acq}.
\end{proof}

\begin{lemma}[MPCC-GCQ for \eqref{eq:e-mpec}]
Given a feasible point $y$ of \eqref{eq:e-mpec},
MPCC-GCQ at $y$ reads $\Tempec(y)^*=\Tlinempec(y)^*$.
\end{lemma}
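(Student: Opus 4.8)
The plan is to treat this as a direct specialization of the general MPCC-GCQ definition to the concrete problem \eqref{eq:e-mpec}, exactly parallel to the immediately preceding MPCC-ACQ lemma. Since \eqref{eq:e-mpec} has already been identified as a special case of the general counterpart MPCC \eqref{eq:i-mpec}, \cref{def:mpec-gcq} applies to it without modification: MPCC-GCQ at a feasible point holds precisely when the dual of the tangent cone coincides with the dual of the MPCC-linearized cone. Substituting the cones already computed for \eqref{eq:e-mpec}, namely $\Tempec(y)$ and $\Tlinempec(y)$, the condition reads $\Tempec(y)^* = \Tlinempec(y)^*$, as claimed.

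There is no substantive obstacle here; the content is pure bookkeeping, matching the abstract cone names in \cref{def:mpec-gcq} to the explicit cones derived earlier for the slack formulation. The only point requiring a moment's care is to confirm that the feasible point $y = (t,w,\ut,\vt,\uw,\vw)$ of \eqref{eq:e-mpec} plays the role of the point $(t,\ut,\vt)$ in \cref{def:mpec-gcq}, which is immediate since \eqref{eq:e-mpec} is literally an instance of \eqref{eq:i-mpec} with an enlarged variable tuple. Thus the proof amounts to a single invocation of \cref{def:mpec-gcq}, in complete analogy to the one-line argument used for the MPCC-ACQ counterpart above.
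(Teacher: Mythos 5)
Your proposal is correct and matches the paper's own argument, which likewise disposes of the statement by a single invocation of \cref{def:mpec-gcq} applied to \eqref{eq:e-mpec} viewed as a special case of \eqref{eq:i-mpec}. The extra bookkeeping you spell out (identifying the enlarged variable tuple $y=(t,w,\ut,\vt,\uw,\vw)$ with the point in the general definition) is exactly the implicit content of the paper's one-line proof.
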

\begin{proof}
 This follows from \cref{def:mpec-gcq}.
\end{proof}
\fi

\begin{remark}
Let
\begin{equation*}
  W(t,\ut,\vt)=\defset{(w,\uw,\vw)}{\abs{w}=\ci(t,\ut+\vt),\uw=[w]^+,\vw=[w]^-}.
\end{equation*}
 Due to symmetry, the above equality of cones (respectively dual cones)
 \ifnum\FmtChoice=2 clearly \fi holds for all elements $(w,\uw,\vw)\in W(t,\ut,\vt)$
 if it holds for any element.
\end{remark}

\ifCmp
Now \cref{th:branch-acq_mpec-acq} reads as follows.
\begin{theorem}[ACQ/GCQ for all \eqref{eq:branch-e-mpec}
  implies MPCC-ACQ/MPCC-GCQ for \eqref{eq:e-mpec}]
  \label{th:e-branch-acq_mpec-acq}
  \label{th:e-branch-gcq_mpec-gcq}
  Consider a feasible point $y = (t,w, \ut,\vt, \uw,\vw)$ of \eqref{eq:e-mpec}
  with \ifnum\FmtChoice<2 associated \fi
   branch problems \eqref{eq:branch-e-mpec}.
  Then, MPCC-ACQ respectively MPCC-GCQ holds for \eqref{eq:e-mpec} at $y$
  if ACQ respectively GCQ holds for all \eqref{eq:branch-e-mpec} at $y$.
\end{theorem}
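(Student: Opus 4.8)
The plan is to specialize \cref{th:branch-acq_mpec-acq}, since \eqref{eq:e-mpec} is an instance of the general counterpart \eqref{eq:i-mpec} and its branch problems \eqref{eq:branch-e-mpec} are exactly the branch problems obtained from \eqref{eq:branch-mpec} under the splitting $\setPtw=\setPt\cup\setPw$ of the degenerate complementarity index set. Concretely, I would invoke the two decompositions already recorded above,
\begin{equation*}
  \Tempec(y)=\bigcup_{\setPtw} \Tep(y) \qtextq{and}
  \Tlinempec(y)=\bigcup_{\setPtw} \Tlinep(y),
\end{equation*}
which follow from \eqref{eq:decomp-cones-mpec} together with the cone homeomorphisms of \cref{le:hom-T-i}. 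With these in hand, both assertions reduce to set-level manipulations.

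For the ACQ part, MPCC-ACQ for every branch problem means $\Tep(y)=\Tlinep(y)$ for each admissible index choice $\setPtw$. Taking the union over all $\setPtw$ and substituting the decompositions immediately gives $\Tempec(y)=\bigcup_{\setPtw}\Tep(y)=\bigcup_{\setPtw}\Tlinep(y)=\Tlinempec(y)$, which is precisely MPCC-ACQ for \eqref{eq:e-mpec} at $y$.

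For the GCQ part, I would dualize the same decompositions using the order-reversing identity $\bigl(\bigcup_{\setPtw}\Tep(y)\bigr)^*=\bigcap_{\setPtw}\Tep(y)^*$ and its analogue for the linearized cones. Since MPCC-GCQ for all branches supplies $\Tep(y)^*=\Tlinep(y)^*$ for every $\setPtw$, intersecting over $\setPtw$ yields $\Tempec(y)^*=\bigcap_{\setPtw}\Tep(y)^*=\bigcap_{\setPtw}\Tlinep(y)^*=\Tlinempec(y)^*$, i.e.\ MPCC-GCQ for \eqref{eq:e-mpec}.

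There is essentially no technical obstacle here: the substantive content lives entirely in the decomposition \eqref{eq:decomp-cones-mpec} (established in \cite{FlegelDiss} and transported to the slack formulation via the homeomorphisms), and once that is available both claims are one-line consequences. The only point deserving explicit mention is that dualization turns the finite union into a finite intersection, which is exactly what makes the GCQ half of the argument valid despite the union structure of the cones.
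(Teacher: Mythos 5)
Your proof is correct and follows essentially the same route as the paper, which establishes this theorem by specializing \cref{th:branch-acq_mpec-acq} to \eqref{eq:e-mpec} viewed as an instance of \eqref{eq:i-mpec}, i.e.\ exactly the decomposition of $\Tempec(y)$ and $\Tlinempec(y)$ into unions of branch cones, with the union argument for ACQ and the dualization to an intersection for GCQ.
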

\else
As with \eqref{eq:i-mpec}, ACQ or GCQ for all branch problems \eqref{eq:branch-e-mpec}
implies MPCC-ACQ or MPCC-GCQ for \eqref{eq:e-mpec}.

\begin{theorem}[ACQ for all \eqref{eq:branch-e-mpec} implies MPCC-ACQ for \eqref{eq:e-mpec}]\label{th:e-branch-acq_mpec-acq}
 Consider a feasible point $y = (t,w, \ut,\vt, \uw,\vw)$ of \eqref{eq:e-mpec} with associated branch problems \eqref{eq:branch-e-mpec}.
 Then, MPCC-ACQ holds for \eqref{eq:e-mpec} at $y$ if ACQ holds for all \eqref{eq:branch-e-mpec} at $y$.
\end{theorem}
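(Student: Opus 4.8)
The plan is to reduce the statement to the cone decomposition already established for the slack formulation, exactly mirroring the proof of \cref{th:branch-acq_mpec-acq}. By definition, MPCC-ACQ for \eqref{eq:e-mpec} at $y$ means $\Tempec(y) = \Tlinempec(y)$. Since the inclusion $\Tempec(y) \subseteq \Tlinempec(y)$ always holds (as recorded above), only the reverse inclusion must be produced from the hypothesis.

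First I would invoke the decomposition
\begin{equation*}
  \Tempec(y) = \bigcup_{\setPtw} \Tep(y)
  \qtextq{and}
  \Tlinempec(y) = \bigcup_{\setPtw} \Tlinep(y),
\end{equation*}
obtained by applying the MPCC cone decomposition to \eqref{eq:e-mpec} with branch problems \eqref{eq:branch-e-mpec}. The assumption that ACQ holds for every branch problem \eqref{eq:branch-e-mpec} at $y$ says precisely that $\Tep(y) = \Tlinep(y)$ for each admissible index choice $\setPtw$. Substituting this termwise equality into the two unions immediately gives $\Tempec(y) = \bigcup_{\setPtw} \Tlinep(y) = \Tlinempec(y)$, which is MPCC-ACQ.

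The cleaner route, and the one I expect the authors to use, is to recall that \eqref{eq:e-mpec} is itself a special case of the general MPCC \eqref{eq:i-mpec}, with \eqref{eq:branch-e-mpec} serving as its branch problems \eqref{eq:branch-mpec}. Under this identification the assertion is nothing but \cref{th:branch-acq_mpec-acq} applied to \eqref{eq:e-mpec}, so it follows at once. There is no genuine obstacle here: the only points to verify are that the decomposition of both the \tcone and the MPCC-linearized cone into branch cones has indeed been secured for the slack formulation, and that passing from a termwise equality of cones to an equality of their unions is elementary. The remark on $W(t,\ut,\vt)$ ensures the statement does not depend on the particular slack chosen, but this independence is not needed for the implication itself.
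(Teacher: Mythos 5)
Your proposal is correct and matches the paper's argument: the paper derives this theorem by viewing \eqref{eq:e-mpec} as a special case of \eqref{eq:i-mpec} with branch problems \eqref{eq:branch-e-mpec}, so that it is just \cref{th:branch-acq_mpec-acq} applied to the slack formulation, which in turn rests on the cone decomposition you cite. Both routes you sketch are sound, and the second is precisely the one the authors take.
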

\begin{proof}
 This follows from \cref{th:branch-acq_mpec-acq}.
\end{proof}

\begin{theorem}[GCQ for all \eqref{eq:branch-e-mpec} implies MPCC-GCQ for \eqref{eq:e-mpec}]\label{th:e-branch-gcq_mpec-gcq}
 Consider a feasible point $y = (t,w, \ut,\vt, \uw,\vw)$ of \eqref{eq:e-mpec} with associated branch problems \eqref{eq:branch-e-mpec}.
 Then, MPCC-GCQ holds for \eqref{eq:e-mpec} at $y$ if GCQ holds for all \eqref{eq:branch-e-mpec} at $y$.
\end{theorem}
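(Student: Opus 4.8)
The plan is to reduce the claim to the cone decompositions already established for \eqref{eq:e-mpec} and then dualize. Since \eqref{eq:e-mpec} is a special case of \eqref{eq:i-mpec}, the quickest route is to invoke \cref{th:branch-gcq_mpec-gcq} directly; below I sketch the self-contained dualization argument, which makes the underlying mechanism explicit.

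First I would recall the decompositions $\Tempec(y) = \bigcup_{\setPtw} \Tep(y)$ and $\Tlinempec(y) = \bigcup_{\setPtw} \Tlinep(y)$, obtained earlier by applying \eqref{eq:decomp-cones-mpec} to \eqref{eq:e-mpec}. Dualizing both identities via the elementary rule that the dual of a union of cones equals the intersection of their duals yields
\begin{equation*}
  \Tempec(y)^* = \bigcap_{\setPtw} \Tep(y)^* \qtextq{and} \Tlinempec(y)^* = \bigcap_{\setPtw} \Tlinep(y)^*.
\end{equation*}

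Finally, GCQ for the branch problem \eqref{eq:branch-e-mpec} associated with a given $\setPtw$ is by definition the equality $\Tep(y)^* = \Tlinep(y)^*$. Assuming this holds for every admissible $\setPtw$ and intersecting over all such choices immediately gives $\Tempec(y)^* = \Tlinempec(y)^*$, which is exactly MPCC-GCQ for \eqref{eq:e-mpec}.

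I expect essentially no obstacle here: the conclusion is immediate once the decomposition is combined with the union-to-intersection rule for dual cones. The only conceptual point worth flagging is the contrast with the Abadie case---there equality of the \emph{primal} cones propagates directly through the union, whereas here it is equality of the \emph{dual} cones that must be stable under intersection, which it is because intersecting two identical families leaves them unchanged.
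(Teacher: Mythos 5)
Your proof is correct and follows essentially the same route as the paper: the paper also obtains this result by specializing \cref{th:branch-gcq_mpec-gcq} to \eqref{eq:e-mpec} viewed as an instance of \eqref{eq:i-mpec}, which in turn rests on the cone decomposition \eqref{eq:decomp-cones-mpec} together with dualization of the union into an intersection. Your explicit spelling-out of that dualization step matches the intended mechanism exactly.
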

\begin{proof}
 This follows from \cref{th:branch-gcq_mpec-gcq}.
\end{proof}
\fi

\subsection{Relations of MPCC-CQs for Different Formulations}

In this paragraph we prove relations between constraint qualifications for the two different formulations \eqref{eq:i-mpec} and \eqref{eq:e-mpec}.
Some relations follow from the results in the previous section and in the two following sections. 

\begin{theorem}\label{th:acq}
 MPCC-ACQ for \eqref{eq:i-mpec} holds at $(t,\ut,\vt)\in\Fmpec$ if and only if
 MPCC-ACQ for \eqref{eq:e-mpec} holds at $(t,w,\ut,\uw,\vt,\vw)\in\Fempec$
 for any (and hence all) $(w, \uw, \vw) \in W(t, \ut, \vt)$.
\end{theorem}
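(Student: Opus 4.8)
The plan is to avoid any direct sequence construction and instead chain three equivalences, exploiting that MPCC-ACQ and AKQ are linked by the cone homeomorphisms established earlier. First I would record that MPCC-ACQ means $\Tmpec(t,\ut,\vt)=\Tlinmpec(t,\ut,\vt)$ (respectively $\Tempec(y)=\Tlinempec(y)$), and that the inclusion $\subseteq$ always holds, so only the set identity is at stake. The key observation is that the map $\psi$ of \cref{le:hom-T-i}, given by the single fixed formula $(\delta t,\delta\ut,\delta\vt)\mapsto(\delta t,\delta\ut-\delta\vt)$, is simultaneously a homeomorphism $\Tmpec(t,\ut,\vt)\to\Tabs(t,\zt)$ and a homeomorphism $\Tlinmpec(t,\ut,\vt)\to\Tlinabs(t,\zt)$, where $(t,\zt)=\phi(t,\ut,\vt)$. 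Since $\Tmpec\subseteq\Tlinmpec$ and $\psi$ is injective on the larger cone $\Tlinmpec$, the restriction $\psi|_{\Tmpec}$ is precisely the restriction of $\psi|_{\Tlinmpec}$, so $\psi(\Tmpec)=\Tabs$ and $\psi(\Tlinmpec)=\Tlinabs$. Injectivity then yields $\Tmpec=\Tlinmpec$ if and only if $\Tabs=\Tlinabs$; that is, MPCC-ACQ for \eqref{eq:i-mpec} at $(t,\ut,\vt)$ holds if and only if AKQ for \eqref{eq:i-anf} holds at $(t,\zt)$.

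Second, I would apply the identical argument to $\_\psi$ of \cref{le:hom-T-e}, which is again one fixed linear formula acting as a homeomorphism both on the \tcones and on the linearized cones; this shows that MPCC-ACQ for \eqref{eq:e-mpec} at $y=(t,w,\ut,\vt,\uw,\vw)$ holds if and only if AKQ for \eqref{eq:e-anf} holds at $\_\phi(y)=(t,w,\zt,\zw)$. Third, I would invoke \cref{th:akq}, which already equates AKQ for \eqref{eq:i-anf} at $(t,\zt(t))$ with AKQ for \eqref{eq:e-anf} at $(t,w,\zt(t),\zw(w))$ for any (and hence all) $w\in W(t)$. Concatenating the three equivalences gives the claim, and the ``for any (and hence all)'' clause is inherited from \cref{th:akq} together with the symmetry observation that the cones are insensitive to the particular representative in $W(t,\ut,\vt)$ (the signs of degenerate slack components only).

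The only genuine point I would treat with care is the \emph{simultaneity} in the first and second steps: that one and the same affine map realizes both the tangent-cone and the linearized-cone identifications, so that the set equality $\Tmpec=\Tlinmpec$ transports verbatim to $\Tabs=\Tlinabs$. This is immediate once one notes that $\psi$ and $\_\psi$ are prescribed by fixed linear formulas that do not depend on which cone they act on, but it is the hinge of the whole argument and should be stated explicitly. An alternative, more computational route would mirror the proof of \cref{th:akq} directly, lifting sequences $(t_k,\ut_k,\vt_k)\in\Fmpec$ to $\Fempec$ by choosing slacks $w_k$ with $\abs{w_k}=\ci(t_k,\ut_k+\vt_k)$ and splitting $(\uw_k,\vw_k)=([w_k]^+,[w_k]^-)$; there the delicate step is selecting the signs of the components $w_{ki}$ for $i\in\setDw$ via a matrix $\Sigw$ so as to reproduce a prescribed degenerate direction $0\le\delta\uw_i\compl\delta\vw_i\ge0$, exactly as in \cref{th:akq}. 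I expect the homeomorphism-chaining route to be both shorter and less error-prone.
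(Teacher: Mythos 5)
Your proposal is correct and follows essentially the same route as the paper, which proves \cref{th:acq} by chaining \cref{th:akq-acq-i}, \cref{th:akq}, and \cref{th:acq-akq-e}; your first two steps simply re-derive \cref{th:akq-acq-i} and \cref{th:acq-akq-e} from the cone homeomorphisms of \cref{le:hom-T-i} and \cref{le:hom-T-e}, which is exactly how the paper establishes them. The point you flag about a single fixed map realizing both the tangent-cone and linearized-cone identifications is indeed the hinge, and it is exactly what those lemmas provide.
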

\begin{proof}
 This follows immediately from \cref{th:akq}, \cref{th:akq-acq-i} and \cref{th:acq-akq-e}.
\end{proof}

\begin{theorem}\label{th:gcq}
 MPCC-GCQ for \eqref{eq:i-mpec} holds at $(t,\ut,\vt)\in\Fmpec$ if
 MPCC-GCQ for \eqref{eq:e-mpec} holds at $(t,w,\ut,\vt,\uw,\vw)\in\Fempec$
 for any (and hence all) $(w, \uw, \vw) \in W(t, \ut, \vt)$.
\end{theorem}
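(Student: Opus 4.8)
The plan is to follow the proof of \cref{th:gkq}, using that \eqref{eq:e-mpec} is a lifting of \eqref{eq:i-mpec} over the additional slack coordinates $(w,\uw,\vw)$. Since the inclusion $\Tmpec(t,\ut,\vt)^{*}\supseteq\Tlinmpec(t,\ut,\vt)^{*}$ holds unconditionally, it suffices to derive the reverse inclusion
\begin{equation*}
  \Tmpec(t,\ut,\vt)^{*}\subseteq\Tlinmpec(t,\ut,\vt)^{*}
\end{equation*}
from the hypothesis $\Tempec(y)^{*}=\Tlinempec(y)^{*}$ at $y=(t,w,\ut,\vt,\uw,\vw)$.

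First I would take $\omega=(\omega t,\omega\ut,\omega\vt)\in\Tmpec(t,\ut,\vt)^{*}$ and extend it by zeros in the slack slots,
\begin{equation*}
  \tilde\omega\define(\omega t,0,\omega\ut,\omega\vt,0,0).
\end{equation*}
Because $\Fempec$ is a lifting of $\Fmpec$, the coordinate projection $(t,w,\ut,\vt,\uw,\vw)\mapsto(t,\ut,\vt)$ maps $\Fempec$ into $\Fmpec$; hence every $\tilde\delta=(\delta t,\delta w,\delta\ut,\delta\vt,\delta\uw,\delta\vw)\in\Tempec(y)$ projects to $(\delta t,\delta\ut,\delta\vt)\in\Tmpec(t,\ut,\vt)$. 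Consequently $\tilde\omega^{T}\tilde\delta=\omega^{T}(\delta t,\delta\ut,\delta\vt)\ge 0$, so $\tilde\omega\in\Tempec(y)^{*}$, and the hypothesis gives $\tilde\omega\in\Tlinempec(y)^{*}$.

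It remains to transfer this back. Given an arbitrary $\delta=(\delta t,\delta\ut,\delta\vt)\in\Tlinmpec(t,\ut,\vt)$, I would build a lift $\tilde\delta\in\Tlinempec(y)$ that projects onto $\delta$ and satisfies $\tilde\omega^{T}\tilde\delta=\omega^{T}\delta$. Setting $r\define\partial_1\ci\,\delta t+\partial_2\ci(\delta\ut+\delta\vt)$, I split $r=\delta\uw+\delta\vw$ into the complementarity cone $\Tcompl(\huw,\hvw)$ componentwise: take $(\delta\uw_i,\delta\vw_i)=(r_i,0)$ for $i\in\setUw_+$, $(\delta\uw_i,\delta\vw_i)=(0,r_i)$ for $i\in\setVw_+$, and $(\delta\uw_i,\delta\vw_i)=(r_i,0)$ for $i\in\setDw$, and finally put $\delta w\define\delta\uw-\delta\vw$. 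Checking the defining relations of $\Tlinempec(y)$ then shows $\tilde\delta\in\Tlinempec(y)$, and since $\tilde\omega$ vanishes on the slack slots, $\tilde\omega^{T}\tilde\delta=\omega^{T}\delta$. With $\tilde\omega\in\Tlinempec(y)^{*}$ this forces $\omega^{T}\delta\ge0$; as $\delta$ was arbitrary, $\omega\in\Tlinmpec(t,\ut,\vt)^{*}$, which closes the argument. Independence of the particular $(w,\uw,\vw)\in W(t,\ut,\vt)$ follows from the symmetry noted in the preceding remark, so ``any'' and ``all'' coincide.

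The main obstacle is admissibility of the split for the degenerate indices $i\in\setDw$: the choice $(\delta\uw_i,\delta\vw_i)=(r_i,0)$ lies in $\Tcompl(\huw,\hvw)$ only when $r_i\ge0$. I expect to justify this by identifying the degenerate slack indices with the active inequality indices, $\setDw=\setA(t)$ --- indeed $\uw_i=\vw_i=0$ is equivalent to $w_i=0$, i.e.\ to $c_i(t,\ut+\vt)=\abs{w_i}=0$ --- so that the active-inequality row $\partial_1\cA\,\delta t+\partial_2\cA(\delta\ut+\delta\vt)\ge0$ built into $\Tlinmpec(t,\ut,\vt)$ is precisely the statement $r_i\ge0$ for $i\in\setDw$. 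This sign information is the only place where membership $\delta\in\Tlinmpec$ is used beyond the equality rows, and it is exactly what makes the constructed lift feasible for $\Tlinempec$.
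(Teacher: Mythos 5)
Your proposal is correct and follows essentially the same route as the paper's proof: extend $\omega$ by zeros to $\tilde\omega$, observe that projection of tangent vectors puts $\tilde\omega$ in $\Tempec(y)^*$, invoke the hypothesis, and transfer back via a lift of each $\delta\in\Tlinmpec(t,\ut,\vt)$ into $\Tlinempec(y)$. You additionally make explicit the one step the paper leaves implicit — the construction of the lift and the verification that the choice $(\delta\uw_i,\delta\vw_i)=(r_i,0)$ for $i\in\setDw$ is admissible because $\setDw=\setA(t)$ and the active-inequality rows of $\Tlinmpec$ give $r_i\ge 0$ there — which is a correct and worthwhile elaboration.
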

\begin{proof}
 The inclusion $\Tmpec(t, \ut, \vt)^* \supseteq \Tlinmpec(t, \ut, \vt)^*$ holds always.
 Thus, it is left to show that
\begin{equation*}
  \Tmpec(t,\ut, \vt)^* \subseteq \Tlinmpec(t, \ut, \vt)^*.
\end{equation*}
 Let $\omega=(\omega t,\omega \ut, \omega \vt)\in \Tmpec(t, , \ut, \vt)^*$,
 i.e. $\omega^T\delta\ge 0$ for all $\delta=(\delta t,\delta \ut,\delta \vt)\in \Tmpec(t, \ut, \vt)$.
 Then, \ifnum\FmtChoice=0 define \else let \fi $\tilde{\omega}=(\omega t,0,\omega \ut, \omega \vt,0,0)$ to obtain $\tilde{\omega}^T\tilde \delta=\omega^T\delta\ge 0$ for all $\tilde \delta\in \Tempec(t,w, \ut, \vt,\uw,\vw)$ where $w\in W(t)$ is arbitrary.
 By assumption, we have $\tilde{\omega}^T\tilde \delta\ge 0$ for all $\tilde \delta\in \Tlinempec(t,w, \ut, \vt,\uw,\vw)$
 which implies $\omega^T\delta=\tilde{\omega}^T\tilde \delta\ge 0$ for all $\delta\in \Tlinmpec(t, \ut, \vt)$.
\end{proof}

The converse of the previous theorem is unlikely to hold, but we do not know how to construct a counterexample.
However, equivalence of ACQ or GCQ for corresponding branch problems holds.

\begin{theorem}\label{th:mpec-branch-acq}
 ACQ for \eqref{eq:branch-mpec} holds at $(t,\ut,\vt)\in\Fp$ if and only if
 ACQ for \eqref{eq:branch-e-mpec} holds at $(t,w,\ut,\vt,\uw,\vw)\in\Fep$
 for any (and hence all) $(w, \uw, \vw) \in W(t, \ut, \vt)$.
\end{theorem}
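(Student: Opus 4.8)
Recall that Abadie's CQ for a smooth branch problem means equality of its tangent cone and its linearized cone, and that the inclusions $\Tp \subseteq \Tlinp$ and $\Tep \subseteq \Tlinep$ always hold. The plan is to transport both MPCC branch conditions to the corresponding abs-normal NLP branch conditions through the cone homeomorphisms already at our disposal, and then to invoke \cref{th:branch-acq}, which settles the equivalence on the abs-normal NLP side. Concretely, I aim to establish the chain of equivalences
\begin{equation*}
  \Tp = \Tlinp
  \iff
  \Tsig = \Tlinsig
  \iff
  \Tesig = \Tlinesig
  \iff
  \Tep = \Tlinep .
\end{equation*}

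For the first equivalence I would fix the branch, so that $\setPt$ corresponds to a signature $\Sigt$ in the usual way and \cref{le:hom-T-branch-i} applies. That lemma provides homeomorphisms $\psi_{\setPt}\colon \Tp \to \Tsig$ and $\psi_{\setPt}\colon \Tlinp \to \Tlinsig$, both induced by the single linear map $(\delta t, \delta\ut, \delta\vt) \mapsto (\delta t, \delta\ut - \delta\vt)$. The decisive observation is that on a fixed branch this map is \emph{injective}: the index sets $\setUt_+,\setVt_+,\setPt,\bar\setPt$ partition all switching indices, and the branch constraints force exactly one of $\delta\ut_i,\delta\vt_i$ to vanish at each index, so $(\delta\ut,\delta\vt)$ is recovered uniquely from $\delta\ut - \delta\vt$ together with the known index sets. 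Since $\Tp$ and $\Tlinp$ both lie in this domain of injectivity and $\psi_{\setPt}(\Tp) = \Tsig$, $\psi_{\setPt}(\Tlinp) = \Tlinsig$, equality of the images forces equality of the preimages in both directions, giving $\Tp = \Tlinp \iff \Tsig = \Tlinsig$. The third equivalence is obtained by the verbatim argument applied to the slack analogue of \cref{le:hom-T-branch-i}, whose cone homeomorphisms $\bar\psi_{\setPtw}$ map $\Tep \to \Tesig$ and $\Tlinep \to \Tlinesig$ and are likewise injective on each fixed branch $\setPtw$. The middle equivalence is precisely \cref{th:branch-acq}.

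Concatenating the three equivalences yields $\Tp = \Tlinp \iff \Tep = \Tlinep$, which is the assertion for the fixed branch and slack under consideration. Independence from the particular slack $(w,\uw,\vw) \in W(t,\ut,\vt)$ is inherited from \cref{th:branch-acq} and the symmetry of the problem in the slack signs: admissible slacks differ only by sign flips on the active components, which leaves the cone equalities intact.

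I expect the only genuine subtlety to be the injectivity step, i.e.\ making precise that the \emph{same} linear map restricts to homeomorphisms of both the tangent cone and the linearized cone, so that equality of images forces equality of preimages. Once this is recorded, the remainder is a purely formal concatenation of \cref{le:hom-T-branch-i}, its slack analogue, and \cref{th:branch-acq}, with no sequence construction of the kind used in \cref{th:akq} required. Should one wish to avoid the homeomorphisms entirely, the statement can alternatively be proved by mimicking the sequence argument of \cref{th:akq}: feasible sequences for \eqref{eq:branch-e-mpec} are built from feasible sequences for \eqref{eq:branch-mpec} by defining the slacks through $\abs{w} = \ci(t,\ut+\vt)$ and splitting $w$ into $\uw = [w]^+$ and $\vw = [w]^-$, and conversely.
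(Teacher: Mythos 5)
Your proof is correct and follows essentially the same route as the paper, which simply chains \cref{th:branch-acq-i}, \cref{th:branch-acq}, and \cref{th:branch-acq-e} — exactly the three equivalences in your displayed chain. Your added remark on why the homeomorphisms of \cref{le:hom-T-branch-i} transfer equality of cones in both directions (injectivity of the common linear map on a fixed branch) is a correct elaboration of what the paper leaves as ``obvious.''
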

\begin{proof}
 This follows immediately from \cref{th:branch-acq}, \cref{th:branch-acq-i} and \cref{th:branch-acq-e}.
\end{proof}

\begin{theorem}\label{th:mpec-branch-gcq}
 GCQ for \eqref{eq:branch-mpec} holds at $(t,\ut,\vt)\in\Fp$ if and only if
 GCQ for \eqref{eq:branch-e-mpec} holds at $(t,w,\ut,\vt,\uw,\vw)\in\Fep$
 for any (and hence all) $(w, \uw, \vw) \in W(t, \ut, \vt)$.
\end{theorem}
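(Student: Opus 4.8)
The plan is to reduce the statement to \cref{th:branch-gcq} by comparing each MPCC branch problem with its abs-normal counterpart. Since the branch problems are smooth, the inclusions $\Tp(t,\ut,\vt)^{*}\supseteq\Tlinp(t,\ut,\vt)^{*}$ and $\Tep(y)^{*}\supseteq\Tlinep(y)^{*}$ hold automatically, so in each case GCQ amounts to the single reverse inclusion of dual cones. I would establish the two auxiliary equivalences ``GCQ for \eqref{eq:branch-mpec} $\iff$ GCQ for \eqref{eq:branch-anf}'' and ``GCQ for \eqref{eq:branch-e-mpec} $\iff$ GCQ for \eqref{eq:branch-e-anf}'', and then close the chain with \cref{th:branch-gcq}, which already relates the two abs-normal branch problems. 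Independence of the chosen slack $(w,\uw,\vw)\in W(t,\ut,\vt)$ then follows from the symmetry observation in the remark introducing $W(t,\ut,\vt)$.

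For the two auxiliary equivalences I would exploit the cone homeomorphism $\psi_{\setPt}$ of \cref{le:hom-T-branch-i} and its slack analogue for \eqref{eq:branch-e-mpec}. The decisive point is that $\psi_{\setPt}$ is the restriction to the cones of the fixed \emph{linear} map $(\delta t,\delta\ut,\delta\vt)\mapsto(\delta t,\delta\ut-\delta\vt)$. On a branch problem the defining sign constraints pin, for every complementarity index $i$, exactly one of $\delta\ut_i,\delta\vt_i$ to zero, so both $\Tp$ and $\Tlinp$ lie in a common coordinate subspace $V$ on which $\psi_{\setPt}$ restricts to a genuine linear isomorphism carrying $\Tp$ onto $\Tsig$ and $\Tlinp$ onto $\Tlinsig$. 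Writing $C^{*}=C^{*}_{V}\oplus V^{\perp}$ for the ambient dual of a cone $C\subseteq V$, the common summand $V^{\perp}$ cancels, so $\Tp^{*}=\Tlinp^{*}$ is equivalent to equality of the duals taken inside $V$; and a linear isomorphism transforms these inner duals bijectively through its inverse adjoint, so the latter equality is in turn equivalent to $\Tsig^{*}=\Tlinsig^{*}$, i.e.\ to GCQ for \eqref{eq:branch-anf}. The identical argument with the slack homeomorphism (obtained by specializing \cref{le:hom-T-branch-i}, now pinning the pairs $\delta\uw_i,\delta\vw_i$ as well and enforcing $\delta w=\delta\uw-\delta\vw$) yields the equivalence for the slack formulations.

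I expect the main obstacle to be exactly that the homeomorphisms are \emph{not} isomorphisms of the full ambient spaces: the linear map underlying $\psi_{\setPt}$ collapses the complementary directions, so equality of the ambient dual cones cannot simply be transported. The remedy is the subspace bookkeeping above, which requires verifying carefully that \emph{both} the \tcone and the linearized cone of each branch problem genuinely live in the coordinate subspace $V$ determined by the active sign constraints (immediate from the description of $\Tlinp$ and the inclusion $\Tp\subseteq\Tlinp$), and that in the slack formulation the block $(\delta\uw,\delta\vw)$ together with the relation $\delta w=\delta\uw-\delta\vw$ occupies its own coordinate subspace, so that the orthogonal complements again match on the two sides of the dual equality.

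As an alternative that avoids the subspace machinery, one can mimic the direct construction of \cref{th:branch-gcq}: for the implication ``$\Rightarrow$'' one takes $\omega\in\Tep^{*}$ and folds its slack components $(\omega w,\omega\uw,\omega\vw)$ through the derivatives of $\ci$ by means of the branch relation $\partial_1\ci\,\delta t+\partial_2\ci(\delta\ut+\delta\vt)=\delta\uw+\delta\vw$ to produce a functional $\tilde\omega\in\Tlinp^{*}$ with $\tilde\omega^{T}\tilde\delta=\omega^{T}\delta$; the reverse implication is obtained exactly as in \cref{th:gkq}.
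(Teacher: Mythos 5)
Your proposal is correct and follows the paper's route exactly: the paper's proof of \cref{th:mpec-branch-gcq} is precisely the chain through \cref{th:branch-gcq} together with the two auxiliary equivalences you propose to establish, which appear in the paper as \cref{th:branch-gcq-i} and \cref{th:branch-gcq-e} and are proved there by the same dual-functional folding you sketch as your alternative. Your subspace/linear-isomorphism argument for transporting dual-cone equality through $\psi_{\setPt}$ is a sound (and arguably cleaner) way to re-derive those two cited results, but it does not change the overall structure of the argument.
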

\begin{proof}
 This follows immediately from \cref{th:branch-gcq}, \cref{th:branch-gcq-i} and \cref{th:branch-gcq-e}.
\end{proof}

\section{Kink Qualifications and MPCC Constraint Qualifications}
\label{sec:qualifications}

In this section we show relations between abs-normal NLPs and counterpart MPCCs.
Here, we consider both treatments of inequality constraints.

\subsection{Relations of General Abs-Normal NLP and MPCC}
In the following the variables $x$ and $z$ instead of $t$ and $\zt$ are used.
Thus, the  abs-normal NLP \eqref{eq:i-anf} reads:
  \begin{equation*}
    \minst[x,z]{f(x)}
    \ce(x,\abs{z})=0,\quad
    \ci(x,\abs{z}) \ge 0,\quad
    \cz(x,\abs{z})-z=0.
  \end{equation*}
The counterpart MPCC \eqref{eq:i-mpec} becomes:
  \begin{align*}
    \sminst[x,u,v]{f(x)}
    & \ce(x,u+v)=0, \quad
    \ci(x,u+v) \ge 0,\\
    & \cz(x,u+v)-(u-v)=0, \quad
    0 \le u \compl v \ge 0.
  \end{align*}

Then, the subsequent relations of kink qualifications and MPCC constraint qualifications can be shown.

\begin{theorem}[AKQ for \eqref{eq:i-anf} $\iff$ MPCC-ACQ for \eqref{eq:i-mpec}]\label{th:akq-acq-i}
  AKQ for \eqref{eq:i-anf} holds at $(x,z(x))\in\Fabs$ if and only if MPCC-ACQ for \eqref{eq:i-mpec} holds at
  $(x,u,v)=(x,[z(x)]^+,[z(x)]^-)\in\Fmpec$.
\end{theorem}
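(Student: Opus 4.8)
The plan is to reduce the claim entirely to the cone homeomorphisms constructed in \cref{le:hom-T-i}. Recall that AKQ for \eqref{eq:i-anf} at $(x,z(x))$ is by definition the equality $\Tabs(x,z) = \Tlinabs(x,z)$, whereas MPCC-ACQ for \eqref{eq:i-mpec} at $(x,u,v)$ is the equality $\Tmpec(x,u,v) = \Tlinmpec(x,u,v)$. Since the inclusions $\Tabs \subseteq \Tlinabs$ and $\Tmpec \subseteq \Tlinmpec$ always hold, each constraint qualification amounts to the respective reverse inclusion, so I only ever need to transport equalities (or reverse inclusions) of sets back and forth.

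The decisive observation is that \cref{le:hom-T-i} furnishes a \emph{single} map $\psi$, together with its inverse $\psi^{-1}$, that simultaneously serves as a homeomorphism $\Tmpec(x,u,v) \to \Tabs(x,z)$ and as a homeomorphism $\Tlinmpec(x,u,v) \to \Tlinabs(x,z)$. Both restrictions are given by the very same formulas $\psi(\delta t,\delta u,\delta v) = (\delta t,\, \delta u - \delta v)$ and $\psi^{-1}(\delta t, \delta z) = (\delta t,\, \delpos{z},\, \delneg{z})$. Consequently, applying one and the same bijection to the two MPCC cones transports equality (and hence the reverse inclusion) directly to the two abs-normal cones, and the reverse application of $\psi^{-1}$ does the same in the opposite direction.

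Concretely, I would argue as follows. If MPCC-ACQ holds, then $\Tmpec(x,u,v) = \Tlinmpec(x,u,v)$; applying the surjection $\psi$ and using \cref{le:hom-T-i} twice gives $\Tabs(x,z) = \psi(\Tmpec(x,u,v)) = \psi(\Tlinmpec(x,u,v)) = \Tlinabs(x,z)$, so AKQ holds. For the converse, if AKQ holds then $\Tabs(x,z) = \Tlinabs(x,z)$; applying $\psi^{-1}$ gives $\Tmpec(x,u,v) = \psi^{-1}(\Tabs(x,z)) = \psi^{-1}(\Tlinabs(x,z)) = \Tlinmpec(x,u,v)$, so MPCC-ACQ holds. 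In each chain the outer two equalities are the surjectivity statements of \cref{le:hom-T-i} and the middle one is the hypothesis. I do not expect any genuine obstacle: the nontrivial work, namely verifying that the complementarity sign splitting $\delpos{z}, \delneg{z}$ turns $\psi$ into a bijection of tangent cones, has already been done in \cref{le:hom-T-i}; the only point requiring care is to stress explicitly that the two homeomorphisms there are realized by identical maps, so that image and preimage of the common set are unambiguous and the equivalence follows with no further computation.
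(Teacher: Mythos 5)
Your proposal is correct and is essentially the paper's own proof: the paper reduces the claim to the statement that the equality $\Tabs(x,z)=\Tlinabs(x,z)$ is equivalent to $\Tmpec(x,u,v)=\Tlinmpec(x,u,v)$ and declares this obvious from the homeomorphisms $\psi$ of \cref{le:hom-T-i}. Your only addition is to make explicit the (correct and worth stressing) point that both homeomorphisms are realized by the identical map $\psi$, which is exactly what legitimizes transporting the set equality in both directions.
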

\begin{proof}
\ifcase0
We need to show
\begin{equation*}
  \Tabs(x, z) = \Tlinabs(x, z) \iff \Tmpec(x, u, v) = \Tlinmpec(x, u, v).
\end{equation*}
This is obvious from the homeomorphisms $\psi$ in \cref{le:hom-T-i}.
\or
By \cref{le:i-cones} and \cref{le:i-cones-mpec} we always have
\begin{equation*}
 \Tabs(x,z)\subseteq\Tlinabs(x,z) \qtextq{and} \Tmpec(x,u,v)\subseteq\Tlinmpec(x,u,v).
\end{equation*}
Thus, we just need to prove
\begin{equation*}
 \Tabs(x,z)=\Tlinabs(x,z) & \implies \Tmpec(x,u,v)\supseteq\Tlinmpec(x,u,v),\\
 \Tmpec(x,u,v)=\Tlinmpec(x,u,v) & \implies \Tabs(x,z)\supseteq\Tlinabs(x,z).
\end{equation*}
We start with the implication ``$\Rightarrow$'' and consider $\delta=(\delta x, \delta u,\delta v)\in\Tlinmpec(x,u,v)$.
Then, we set
\begin{equation*}
  \delta z_i \define \delta u_i - \delta v_i =
  \begin{ccases}
    +\delta u_i, & i \in \setU_+ \\
    -\delta v_i, & i \in \setV_+ \\
    \delta u_i - \delta v_i, & i \in \setD
  \end{ccases}
  \qtextq{with}
  0 \le \delta u_i \compl \delta v_i \ge 0 \text{ for } i \in \setD,
\end{equation*}
and obtain $\tilde\delta=(\delta x,\delta z)\in\Tlinabs(x,z)$.
This is because
\begin{equation*}
  \delta u_i + \delta v_i =
  \begin{ccases}
    \delta u_i, & i \in \setU_+ \\
    \delta v_i, & i \in \setV_+ \\
    \delta u_i + \delta v_i, & i \in \setD
  \end{ccases}
  =
  \begin{ccases}
    \sigma_i\delta z_i, & \sigma_i = +1 \\
    \sigma_i\delta z_i, & \sigma_i = -1 \\
    \abs{\delta z_i}, & \sigma_i = 0
  \end{ccases}
  = \delta\zeta \qtextq{for} \sigma = \sigma(x).
\end{equation*}
By assumption, $\tilde\delta\in\Tabs(x,z)$ and there exist sequences $\Fabs\ni(x_k,z_k)\to (x,z)$
and $\tau_k\searrow 0$ such that $\tau_k^{-1}(x_k-x,z_k-z)\to (\delta x,\delta z)$.
We set $u_k=[z_k]^+$ and $v_k=[z_k]^-$ and have $\tau_k^{-1}((u_k-u)-(v_k-v))\to \delta u-\delta v$.
Thus, it remains to show $\tau_k^{-1}(u_k-u,v_k-v)\to(\delta u,\delta v)$.
We show this componentwise:
\begin{itemize}
\item $i\in \setU_+$:
we have $v_i=0$ by feasibility and $\delta u_i\neq 0$ and $\delta v_i=0$ by definition of $\Tlinmpec$.
Thus, $(u_k)_i>0$ holds for $k$ large enough and by complementarity $(v_k)_i=0$ holds.
Then, $\tau_k^{-1}((u_k)_i-u_i)\to {\delta u}_i$ follows.
\item $i\in\setV_+$: $\tau_k^{-1}((v_k)_i-v_i)\to {\delta v}_i$ follows as in the previous case .
\item $i\in\setD$ and $\delta u_i>0$:
we have $\delta v_i=0$ by complementarity and so $\tau_k^{-1}((u_i)_k-(v_i)_k)\to \delta u_i$.
Then, $\tau_k^{-1}(u_i)_k\to \delta u_i$ and $\tau_k^{-1}(v_i)_k\to 0$ because of sign constraints.
\item $i\in\setD$ and $\delta v_i>0$:
$\tau_k^{-1}(u_i)_k\to 0$ and $\tau_k^{-1}(v_i)_k\to \delta v_i$ follow as in the previous case.
\item $i\in\setD$ and ${\delta u}_i={\delta v}_i=0$, we have
$\tau_k^{-1}((u_i)_k-(v_i)_k)\to0$. Because of sign constraints and complementarity, this can only hold if
$\tau_k^{-1}(u_i)_k\to 0,\ \tau_k^{-1}(v_i)_k\to0$.
\end{itemize}
Altogether we have $u_k-u\to\delta u$ and $v_k-v\to\delta v$, i.e. $\delta\in\Tmpec(x,u,v)$.\\
Now, we prove the implication ``$\Leftarrow$''. To this end, consider $\delta=(\delta x,\delta z)\in \Tlinabs(x,z)$.
Then, set
\begin{equation*}
  \delta u_i \define
  \begin{ccases}
    \delta z_i & \sigma_i = +1 \\
    0, & \sigma_i = -1 \\
    {}[\delta z_i]^+, & \sigma_i = 0
  \end{ccases}
  \qtextq{and}
  \delta v_i \define
  \begin{ccases}
    0 & \sigma_i = +1 \\
    -\delta z_i, & \sigma_i = -1 \\
    {}[\delta z_i]^-, & \sigma_i = 0
  \end{ccases}
\end{equation*}
to obtain $\delta z = \delta  u - \delta v$ and $\delta \zeta= \delta u + \delta v$.
Thus, $\tilde\delta=(\delta x,\delta u,\delta v)\in\Tlinmpec(x,u,v)$ and by assumption $\tilde d\in\Tmpec$.
Thus, there exist $\Fmpec\ni(x_k,u_k,v_k)\to(x,u,v)$ and $\tau_k\searrow 0$ with
$\tau_k^{-1}(x_k-x,u_k-u,v_k-v)\to(\delta x,\delta u, \delta v)$.
With $z_k=u_k+v_k$ we have $z_k\to z=u+v$ and $\tau_k^{-1}(z_k-z)\to \delta u-\delta v =\delta z$,
i.e. $d\in\Tabs(x,z)$.
\fi
\end{proof}

\begin{theorem}[MPCC-GCQ for \eqref{eq:i-mpec} implies GKQ for \eqref{eq:i-anf}]\label{th:gkq-gcq-i}
  GKQ for \eqref{eq:i-anf} holds at $(x,z(x))\in\Fabs$ if MPCC-GCQ for \eqref{eq:i-mpec} holds at
  $(x,u,v)=(x,[z(x)]^+,[z(x)]^-)\in\Fmpec$.
\end{theorem}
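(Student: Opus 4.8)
The plan is to mirror the dualization argument of \cref{th:gkq}, but now transporting between the abs-normal NLP \eqref{eq:i-anf} and its counterpart MPCC \eqref{eq:i-mpec} through the homeomorphisms $\psi$ of \cref{le:hom-T-i}, rather than between the two NLP formulations. The structural fact that makes this work is that the forward map $\psi(\delta x,\delta u,\delta v)=(\delta x,\delta u-\delta v)$ is the restriction of a \emph{linear} map, even though its inverse, which splits $\delta z$ into $[\delta z]^+$ and $[\delta z]^-$, is not. Consequently the pairing can be pushed through the adjoint of $\psi$ cleanly in the one direction we need.

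Since $\Tabs(x,z)^*\supseteq\Tlinabs(x,z)^*$ always holds by \cref{le:i-cones}, it suffices to prove $\Tabs(x,z)^*\subseteq\Tlinabs(x,z)^*$. So I would take an arbitrary $\omega=(\omega x,\omega z)\in\Tabs(x,z)^*$ and lift it to its adjoint image $\tilde\omega\define(\omega x,\omega z,-\omega z)$. The key identity, valid whenever $\delta z=\delta u-\delta v$, is
\begin{equation*}
  \tilde\omega^T(\delta x,\delta u,\delta v)
  = \omega x^T\delta x+\omega z^T(\delta u-\delta v)
  = \omega^T\psi(\delta x,\delta u,\delta v).
\end{equation*}
Because $\psi$ maps $\Tmpec(x,u,v)$ into $\Tabs(x,z)$ (\cref{le:hom-T-i}) and $\omega\in\Tabs(x,z)^*$, the identity yields $\tilde\omega^T\delta\ge0$ for all $\delta\in\Tmpec(x,u,v)$, i.e.\ $\tilde\omega\in\Tmpec(x,u,v)^*$. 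Invoking MPCC-GCQ, $\Tmpec(x,u,v)^*=\Tlinmpec(x,u,v)^*$, so $\tilde\omega\in\Tlinmpec(x,u,v)^*$. Finally, since $\psi$ maps $\Tlinmpec(x,u,v)$ \emph{onto} $\Tlinabs(x,z)$, every $(\delta x,\delta z)\in\Tlinabs(x,z)$ is $\psi(\delta x,\delta u,\delta v)$ for some element of $\Tlinmpec(x,u,v)$, and the identity gives $\omega^T(\delta x,\delta z)=\tilde\omega^T(\delta x,\delta u,\delta v)\ge0$. Hence $\omega\in\Tlinabs(x,z)^*$.

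There is no real analytic difficulty here; the one point requiring care is conceptual, namely understanding why only the implication MPCC-GCQ $\Rightarrow$ GKQ, and not its converse, is obtainable. The lift $\omega\mapsto\tilde\omega=(\omega x,\omega z,-\omega z)$ is injective, but its range is the proper subspace $\{(a,b,-b)\}$; the argument therefore only probes $\Tmpec(x,u,v)^*$ along that subspace, so equality of the pulled-back dual cones cannot be pushed back to equality of $\Tmpec(x,u,v)^*$ and $\Tlinmpec(x,u,v)^*$ themselves. A general element of $\Tmpec(x,u,v)^*$ need not have the symmetric form $(\cdot,\cdot,-\cdot)$, so it cannot be identified with an element of $\Tabs(x,z)^*$ and the reverse reasoning breaks down. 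This is consistent with the remark following \cref{th:gkq} that the converse is expected to fail.
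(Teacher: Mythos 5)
Your proof is correct and follows essentially the same route as the paper: the same lift $\tilde\omega=(\omega x,\omega z,-\omega z)$, the same pairing identity, and the same use of the homeomorphisms $\psi$ from \cref{le:hom-T-i} (the paper phrases the identity via $\psi^{-1}$ applied to elements of $\Tabs(x,z)$, whereas you evaluate directly on $\Tmpec(x,u,v)$ using surjectivity of $\psi$, which is an immaterial difference). Your closing observation about why the argument is one-directional is also consistent with the paper's remark that the converse remains open.
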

\begin{proof}
The inclusion $\Tlinabs(x,z)^*\subseteq\Tabs(x,z)^*$ hold always by \cref{le:i-cones}.
Thus, we just have to show
\begin{equation*}
  \Tabs(x, z)^* \subseteq \Tlinabs(x, z)^*.
\end{equation*}
Consider $\omega=(\omega x,\omega z)\in\Tabs(x,z)^*$, i.e. $\omega^T\delta \ge 0$ for all $\delta=(\delta x,\delta z)\in\Tabs(x,z)$.
Set $\~\omega = (\omega x, \omega z, -\omega z)$.
For every $\delta \in \Tabs(x, z)$ we then have
\begin{equation*}
  \~\omega^T \Inv\psi(\delta) =
  \omega x^T \delta x + \omega z^T \delpos z - \omega z^T \delneg z =
  \omega x^T \delta x + \omega z^T \delta z =
  \omega^T \delta \ge 0.
\end{equation*}
This means $\~\omega \in \Tmpec(x, u, v)^*$ and hence,
by assumption, $\~\omega \in \Tlinmpec(x, u, v)^*$.
We thus have $\omega^T \delta = \~\omega^T \Inv\psi(\delta) \ge 0$
for every $\delta \in \Tlinabs(x, z)$,
which means $\omega\in\Tlinabs(x,z)^*$.
\end{proof}

The converse is unlikely to hold, although we are not, at this time, aware of a counterexample.
Once again, moving to the branch problems allows to exploit additional sign information.

\begin{theorem}[ACQ for \eqref{eq:branch-anf} $\iff$ ACQ for \eqref{eq:branch-mpec}]\label{th:branch-acq-i}
  ACQ for \eqref{eq:branch-anf} holds at $(x,z(x))\in\Fsig$ if and only if ACQ for the corresponding \eqref{eq:branch-mpec} holds at
  $(x,u,v)=(x,[z(x)]^+,[z(x)]^-)\in\Fp$.
\end{theorem}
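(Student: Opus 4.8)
The plan is to reduce the statement entirely to the cone homeomorphisms already established in \cref{le:hom-T-branch-i}. Since both \eqref{eq:branch-anf} and \eqref{eq:branch-mpec} are smooth NLPs, ACQ for \eqref{eq:branch-anf} at $(x,z(x))$ means $\Tsig(x,z) = \Tlinsig(x,z)$, whereas ACQ for the corresponding \eqref{eq:branch-mpec} at $(x,u,v)$ means $\Tp(x,u,v) = \Tlinp(x,u,v)$. Hence it suffices to prove
\begin{equation*}
  \Tp(x,u,v) = \Tlinp(x,u,v) \iff \Tsig(x,z) = \Tlinsig(x,z).
\end{equation*}

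First I would invoke \cref{le:hom-T-branch-i}, which for the matching index set $\setPt$ supplies a single map $\psi_{\setP}$ that simultaneously restricts to a homeomorphism $\Tp \to \Tsig$ and to a homeomorphism $\Tlinp \to \Tlinsig$. In particular $\psi_{\setP}$ is a bijection of $\Tlinp$ onto $\Tlinsig$ whose restriction to the subcone $\Tp \subseteq \Tlinp$ is a bijection onto $\Tsig \subseteq \Tlinsig$. Since a bijection transports set equalities in both directions, I would then conclude
\begin{equation*}
  \Tp = \Tlinp \iff \psi_{\setP}(\Tp) = \psi_{\setP}(\Tlinp) \iff \Tsig = \Tlinsig,
\end{equation*}
which is exactly the asserted equivalence of the two ACQs. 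This parallels the short argument already used for \cref{th:akq-acq-i}.

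The only point worth checking --- and it is already contained in \cref{le:hom-T-branch-i} --- is that $\psi_{\setP}$ is genuinely injective on $\Tlinp$, so that the image of the proper inclusion $\Tp \subseteq \Tlinp$ cannot spuriously collapse to an equality. This is not a real obstacle: the branch sign conditions force $\delta u_i = 0$ or $\delta v_i = 0$ at every index, so the difference $\delta u_i - \delta v_i$ recovers the full pair $(\delta u_i, \delta v_i)$ and injectivity holds. Consequently all the substance resides in the homeomorphism lemma, and the theorem follows as an immediate corollary.
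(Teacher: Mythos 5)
Your proof is correct and follows essentially the same route as the paper: the paper's own proof also reduces the statement to the cone identity $\Tsig(x,z) = \Tlinsig(x,z) \iff \Tp(x,u,v) = \Tlinp(x,u,v)$ and declares it immediate from the homeomorphisms $\psi_{\setP}$ of \cref{le:hom-T-branch-i}. Your additional remark on injectivity of $\psi_{\setP}$ on the linearized cone is a sensible sanity check but adds nothing beyond what the lemma already guarantees.
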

\begin{proof}
We need to show
\begin{equation*}
  \Tsig(x, z) = \Tlinsig(x, z) \iff \Tp(x, u, v) = \Tlinp(x, u, v).
\end{equation*}
This is obvious from the homeomorphisms $\psi_{\setP}$ in \cref{le:hom-T-branch-i}.
\end{proof}

\begin{theorem}[GCQ for \eqref{eq:branch-anf} $\iff$ GCQ for \eqref{eq:branch-mpec}]\label{th:branch-gcq-i}
  GCQ for \eqref{eq:branch-anf} holds at $(x,z(x))\in\Fsig$ if and only if GCQ for the corresponding \eqref{eq:branch-mpec} holds at
  $(x,u,v)=(x,[z(x)]^+,[z(x)]^-)\in\Fp$.
\end{theorem}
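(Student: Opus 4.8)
The plan is to handle GCQ, as in the proofs of \cref{th:gkq} and \cref{th:gkq-gcq-i}, by reducing it to the reverse dual-cone inclusions and transferring these through the branch homeomorphism $\psi_{\setP}$ of \cref{le:hom-T-branch-i}, the decisive point being that \emph{on a single branch} $\psi_{\setP}$ is the restriction of a linear map. Since the branch problems \eqref{eq:branch-anf} and \eqref{eq:branch-mpec} are smooth, standard NLP theory gives $\Tsig \subseteq \Tlinsig$ and $\Tp \subseteq \Tlinp$, hence dually $\Tsig(x,z)^* \supseteq \Tlinsig(x,z)^*$ and $\Tp(x,u,v)^* \supseteq \Tlinp(x,u,v)^*$ always hold; it therefore suffices to show that the reverse inclusions $\Tsig^* \subseteq \Tlinsig^*$ and $\Tp^* \subseteq \Tlinp^*$ are equivalent. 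The key observation is that on the branch the signature $\sigma = \sigma(x)$ is definite, so the constraints defining $\Fp$ and $\Tlinp$ force $\delta v_i = 0$ whenever $\sigma_i = +1$ (i.e. $i \in \setU_+ \cup \bar\setP$) and $\delta u_i = 0$ whenever $\sigma_i = -1$ (i.e. $i \in \setV_+ \cup \setP$). Thus both $\Tp$ and $\Tlinp$ lie in the subspace $S$ cut out by these equalities, and on $S$ the map $\psi_{\setP}$ coincides with the linear map $P(\delta x, \delta u, \delta v) = (\delta x, \delta u - \delta v)$, whose restriction $P|_S$ is a linear isomorphism onto the $(\delta x, \delta z)$-space with linear inverse $L$ given by $\delta u_i = \delta z_i,\ \delta v_i = 0$ for $\sigma_i = +1$ and $\delta u_i = 0,\ \delta v_i = -\delta z_i$ for $\sigma_i = -1$.

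For the implication ``$\Leftarrow$'' (GCQ for the MPCC branch implies GCQ for the abs-normal branch) I would take $\omega = (\omega x, \omega z) \in \Tsig^*$ and set $\tilde\omega = P^T \omega = (\omega x, \omega z, -\omega z)$. Then $\tilde\omega^T \delta = \omega^T \psi_{\setP}(\delta) \ge 0$ for every $\delta \in \Tp$, so $\tilde\omega \in \Tp^* = \Tlinp^*$; since $\psi_{\setP}$ maps $\Tlinp$ onto $\Tlinsig$, pulling back gives $\omega^T \delta' = \tilde\omega^T \psi_{\setP}^{-1}(\delta') \ge 0$ for all $\delta' \in \Tlinsig$, i.e. $\omega \in \Tlinsig^*$. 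This direction uses only surjectivity of $P$ and would even survive without the branch subspace.

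For the implication ``$\Rightarrow$'' I would, conversely, take $\tilde\omega = (\omega x, \omega u, \omega v) \in \Tp^*$ and set $\omega = L^T \tilde\omega$, that is $\omega z_i = \omega u_i$ for $\sigma_i = +1$ and $\omega z_i = -\omega v_i$ for $\sigma_i = -1$. As $L$ maps $\Tsig$ onto $\Tp$, we get $\omega^T \delta = \tilde\omega^T L\delta \ge 0$ for $\delta \in \Tsig$, whence $\omega \in \Tsig^* = \Tlinsig^*$. Then for any $\delta' \in \Tlinp$ the inclusion $\Tlinp \subseteq S$ yields $\delta' = L(P\delta')$, so that $\tilde\omega^T \delta' = \omega^T (P\delta') = \omega^T \psi_{\setP}(\delta') \ge 0$ because $\psi_{\setP}(\delta') \in \Tlinsig$ and $\omega \in \Tlinsig^*$; this shows $\tilde\omega \in \Tlinp^*$, completing the equivalence.

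The hard part is this last direction, and the single step carrying it is the identity $\delta' = L(P\delta')$ on $\Tlinp$, valid precisely because $\Tlinp$ lies in the branch subspace $S$ on which $P$ is invertible. This recovery of an MPCC multiplier from an abs-normal one is exactly the extra sign information provided by fixing a definite branch; on the full problem $\psi^{-1}$ is genuinely nonlinear and no such recovery exists, which is why only the one-sided implications of \cref{th:gkq-gcq-i} and \cref{th:gkq} are available there. The overall structure mirrors the dualization argument used in \cref{th:branch-gcq}.
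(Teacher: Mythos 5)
Your proof is correct and follows essentially the same route as the paper: both directions reduce GCQ to the nontrivial dual-cone inclusion and transfer it through the adjoint of the branch homeomorphism, which is linear on the branch subspace, and your construction $\omega z_i = \omega u_i$ for $\sigma_i = +1$, $\omega z_i = -\omega v_i$ for $\sigma_i = -1$ is exactly the paper's (which states the index sets as $\setU_+\cup\setP$ and $\setV_+\cup\bar\setP$, apparently a typo for $\setU_+\cup\bar\setP$ and $\setV_+\cup\setP$ as you have it). Your explicit bookkeeping with $P$, $L$ and the subspace $S$ merely makes visible why the recovery step works on a branch but not for the full problem.
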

\begin{proof}
The inclusions $\Tlinp(x,u,v)^*\subseteq\Tp(x,u,v)^*$ and $\Tlinsig(x,z)^*\subseteq\Tsig(x,z)^*$ hold always.
Thus, we just have to show
\begin{equation*}
  \Tsig(x, z)^* \supseteq \Tlinsig(x, z)^* \iff \Tp(x,u,v)^* \supseteq \Tlinp(x,u,v)^*.
\end{equation*}
First, we show the implication ``$\Rightarrow$''.
Consider $\omega=(\omega x,\omega u,\omega v)\in\Tp(x,u,v)^*$, i.e. $\omega^T\delta \ge 0$ for all $\delta=(\delta x,\delta u,\delta v)\in\Tp(x,u,v)$.
Set $\~\omega = (\omega x, \omega z)$ with
\begin{equation*}
 \omega z_i=
 \begin{cases}
  +\omega u_i, & i\in\setU_+\cup \setP,\\
  -\omega v_i, & i\in\setV_+\cup \bar\setP.
 \end{cases}
\end{equation*}
This leads to
\begin{equation*}
  \~\omega^T \psi_{\setP}(\delta) =
  \omega x^T \delta x + \omega z^T (\delta u - \delta v) =
  \omega x^T \delta x + \omega u^T \delta u + \omega v^T \delta v =
  \omega^T \delta \ge 0
\end{equation*}
for every $\delta \in \Tp(x, u, v)$, i.e. $\~\omega \in \Tsig(x, z)^*$.
Then, the assumption yields $\~\omega \in \Tlinsig(x, z)^*$. As we have $\omega^T \delta = \~\omega^T \psi_{\setP}(\delta) \ge 0$
for every $\delta \in \Tlinp(x, u,v)$, we obtain $\omega\in\Tlinp(x,u,v)^*$.
The reverse implication follows as in \cref{th:gkq-gcq-i}.
\end{proof}

\subsection{Relations of Abs-Normal NLP and MPCC with Inequality Slacks}

Now, the relations for the slack reformulations are stated.
These are special cases of the general problem formulations,
hence we \ifCmp obtain the following four theorems that correspond
to \crefrange{th:akq-acq-i}{th:branch-gcq-i}.\else
simply cite the previous proofs.\fi

\begin{theorem}[AKQ for \eqref{eq:e-anf} $\iff$ MPCC-ACQ for \eqref{eq:e-mpec}] \label{th:acq-akq-e}
  AKQ for \eqref{eq:e-anf} holds at $(x,z(x)) \in \Feabs$
  if and only if MPCC-ACQ for \eqref{eq:e-mpec} holds at
  $(x, u, v)=(x,[z(x)]^+,[z(x)]^-) \in \Fempec$.
\end{theorem}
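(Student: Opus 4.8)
The plan is to exploit that \eqref{eq:e-anf} and \eqref{eq:e-mpec} are special instances of \eqref{eq:i-anf} and \eqref{eq:i-mpec}, so that the cone correspondence already established carries over verbatim and the statement reduces to \cref{th:akq-acq-i}. Writing $x=(t,w)$, $z=(\zt,\zw)$, $u=(\ut,\uw)$, $v=(\vt,\vw)$ and using the joint constraint functions $\bcE$ and $\bcZ$ from \cref{sec:anf-equalities}, the problem \eqref{eq:e-anf} is an abs-normal NLP of the form \eqref{eq:i-anf} and \eqref{eq:e-mpec} is its counterpart MPCC of the form \eqref{eq:i-mpec}. Under this identification the feasible point $(x,z(x))\in\Feabs$ and the corresponding MPCC point $(x,u,v)=\Inv{\_\phi}(x,z(x))=(x,[z(x)]^+,[z(x)]^-)\in\Fempec$ are related by the homeomorphism $\_\phi$.

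First I would record that the homeomorphism $\psi$ of \cref{le:hom-T-i}, specialized to the present product structure, yields the single map $\_\psi$ that restricts to a homeomorphism from $\Tempec$ onto $\Teabs$ and from $\Tlinempec$ onto $\Tlineabs$ at the point in question. The substance of the argument is then the elementary observation that a bijection preserves set equality: since one and the same map $\_\psi$ carries $\Tempec$ bijectively onto $\Teabs$ and $\Tlinempec$ bijectively onto $\Tlineabs$, we obtain
\begin{equation*}
  \Tempec = \Tlinempec
  \iff
  \Teabs = \Tlineabs ,
\end{equation*}
with the cones evaluated at the corresponding points. This is precisely the asserted equivalence of MPCC-ACQ for \eqref{eq:e-mpec} and AKQ for \eqref{eq:e-anf}. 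Equivalently, one simply invokes \cref{th:akq-acq-i} for the generic formulations into which \eqref{eq:e-anf} and \eqref{eq:e-mpec} embed.

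I do not expect a genuine obstacle, since all the analytic work has been front-loaded into \cref{le:hom-T-i}, whose specialization to the slack setting was already recorded above. The one point that warrants an explicit sentence is that the tangent-cone homeomorphism and the linearized-cone homeomorphism are restrictions of a \emph{single} map $\_\psi$ to the respective cones; it is exactly this that lets equality on the MPCC side transfer to the abs-normal side and back. Beyond that verification, the proof is a one-line citation of \cref{th:akq-acq-i}.
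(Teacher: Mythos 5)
Your proposal is correct and follows exactly the paper's route: the paper likewise treats \eqref{eq:e-anf} and \eqref{eq:e-mpec} as special cases of \eqref{eq:i-anf} and \eqref{eq:i-mpec} (via $x=(t,w)$, $z=(\zt,\zw)$, $\bcE$, $\bcZ$), specializes the homeomorphism $\psi$ of \cref{le:hom-T-i} to $\_\psi$, and cites \cref{th:akq-acq-i}, whose proof is precisely the observation that a single bijection carrying tangent cone to tangent cone and linearized cone to linearized cone transfers equality of cones in both directions. Your explicit remark that both cone homeomorphisms are restrictions of one and the same map is the right point to emphasize, and nothing further is needed.
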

\ifCmp\else
\begin{proof}
 This follows as in the proof of \cref{th:akq-acq-i}.
\end{proof}
\fi

\begin{theorem}[MPCC-GCQ for \eqref{eq:e-mpec} implies GKQ for \eqref{eq:e-anf}]
  \label{th:gcq-gkq-e}
  GKQ for \eqref{eq:e-anf} holds at $(x,z(x)) \in \Feabs$
  if MPCC-GCQ for \eqref{eq:e-mpec} holds at
  $(x, u, v)=(x,[z(x)]^+, [z(x)]^-)\in \Fempec$.
\end{theorem}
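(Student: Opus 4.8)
The plan is to reproduce the argument of \cref{th:gkq-gcq-i}, exploiting that in the variables $x$ and $z$ of this section the problems \eqref{eq:e-anf} and \eqref{eq:e-mpec} are literally special cases of \eqref{eq:i-anf} and \eqref{eq:i-mpec}. Consequently the homeomorphisms $\psi$ of \cref{le:hom-T-i} apply unchanged, now acting between $\Tempec$ and $\Teabs$ and between $\Tlinempec$ and $\Tlineabs$. Write $(x,z) = (x,z(x)) \in \Feabs$ and $(x,u,v) = \Inv\phi(x,z) = (x,[z]^+,[z]^-) \in \Fempec$. Applying \cref{le:i-cones} to \eqref{eq:e-anf} gives the unconditional inclusion $\Tlineabs(x,z)^* \subseteq \Teabs(x,z)^*$, so GKQ for \eqref{eq:e-anf} reduces to proving the reverse inclusion $\Teabs(x,z)^* \subseteq \Tlineabs(x,z)^*$.

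To establish it I would take an arbitrary $\omega = (\omega x, \omega z) \in \Teabs(x,z)^*$ and form the sign-doubled lift $\~\omega = (\omega x, \omega z, -\omega z)$, exactly as in \cref{th:gkq-gcq-i}. The decisive point is the pairing identity: since $\Inv\psi(\delta) = (\delta x, \delpos z, \delneg z)$ and $\delpos z - \delneg z = \delta z$ by construction of $\psi$, one obtains
\begin{equation*}
  \~\omega^T \Inv\psi(\delta)
  = \omega x^T \delta x + \omega z^T (\delpos z - \delneg z)
  = \omega x^T \delta x + \omega z^T \delta z
  = \omega^T \delta \ge 0
\end{equation*}
for every $\delta \in \Teabs(x,z)$. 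Because $\Inv\psi$ maps $\Teabs(x,z)$ onto $\Tempec(x,u,v)$, this shows $\~\omega \in \Tempec(x,u,v)^*$.

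The final step invokes the hypothesis together with the second homeomorphism. MPCC-GCQ for \eqref{eq:e-mpec} yields $\Tempec(x,u,v)^* = \Tlinempec(x,u,v)^*$, hence $\~\omega \in \Tlinempec(x,u,v)^*$; and since $\psi$ also maps $\Tlinempec(x,u,v)$ onto $\Tlineabs(x,z)$, every $\delta \in \Tlineabs(x,z)$ has preimage $\Inv\psi(\delta) \in \Tlinempec(x,u,v)$, so that $\omega^T \delta = \~\omega^T \Inv\psi(\delta) \ge 0$. This gives $\omega \in \Tlineabs(x,z)^*$ and closes the argument. I expect the only delicate point — and the reason merely this one implication is asserted, just as for \cref{th:gkq-gcq-i} — to be that the lift $\omega \mapsto \~\omega$ is injective but not surjective onto $\Tempec(x,u,v)^*$: a general element of the MPCC dual cone need not have the special block form $(\omega x, \omega z, -\omega z)$, so the converse cannot be recovered by running the same argument in reverse.
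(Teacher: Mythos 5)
Your proposal is correct and follows exactly the paper's intended route: the paper proves this theorem by the remark that \eqref{eq:e-anf} and \eqref{eq:e-mpec} are special cases of \eqref{eq:i-anf} and \eqref{eq:i-mpec}, so the argument of \cref{th:gkq-gcq-i} (lifting $\omega$ to $\~\omega=(\omega x,\omega z,-\omega z)$ and using the pairing identity through $\Inv\psi$) carries over verbatim. Your closing observation about the non-surjectivity of the lift also matches the paper's reason for asserting only the one-sided implication.
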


\ifCmp\else
\begin{proof}
 This follows as in the proof of \cref{th:gkq-gcq-i}.
\end{proof}
\fi

The converse is unlikely to hold, but to date we are not aware of a counterexample.

\begin{theorem}[ACQ for \eqref{eq:branch-e-anf} $\iff$ ACQ for  \eqref{eq:branch-e-mpec}]\label{th:branch-acq-e}
  ACQ for \eqref{eq:branch-e-anf} at $(x,z(x))\in\Fesig$ is equivalent to ACQ for the corresponding \eqref{eq:branch-e-mpec} at
  $(x,u,v)=(x,[z(x)]^+,[z(x)]^-)\in\Fep$.
\end{theorem}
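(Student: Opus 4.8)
The plan is to mirror the proof of \cref{th:branch-acq-i}, replacing the cone homeomorphisms $\psi_{\setP}$ by their slack counterparts $\bar\psi_{\setPtw}$. Since \eqref{eq:branch-e-anf} and \eqref{eq:branch-e-mpec} are smooth branch problems, the inclusions $\Tesig \subseteq \Tlinesig$ and $\Tep \subseteq \Tlinep$ hold automatically, so I would first reduce the asserted ACQ equivalence to the single cone identity
\begin{equation*}
  \Tesig(x,w,\zt,\zw) = \Tlinesig(x,w,\zt,\zw)
  \iff
  \Tep(y) = \Tlinep(y),
\end{equation*}
where $y = (x,w,\ut,\vt,\uw,\vw) = \_\phi_{\setPtw}^{-1}(x,w,\zt,\zw)$ and the branch index sets are fixed by $\setPt = \defset{i \in \alpt(\^t)}{\sigt_i = -1}$ and $\setPw = \defset{i \in \alpw(\^w)}{\sigw_i = -1}$.

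Next I would invoke the cone homeomorphisms established in \cref{subsec:e-mpec} (the slack analogue of \cref{le:hom-T-branch-i}): the single map $\bar\psi_{\setPtw}$ restricts to homeomorphisms $\Tep(y) \to \Tesig(x,w,\zt,\zw)$ and $\Tlinep(y) \to \Tlinesig(x,w,\zt,\zw)$. The essential observation is that both restrictions are induced by the \emph{same} underlying linear formula $(\delta t,\delta w,\delta\ut,\delta\vt,\delta\uw,\delta\vw)\mapsto(\delta t,\delta w,\delta\ut-\delta\vt,\delta\uw-\delta\vw)$, so that $\bar\psi_{\setPtw}$ carries the tangent cone of one formulation bijectively onto the tangent cone of the other, and likewise for the linearized cones. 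The remaining step is then purely set-theoretic: applying the bijection to the assumed equality $\Tep(y)=\Tlinep(y)$ gives $\Tesig = \bar\psi_{\setPtw}(\Tep) = \bar\psi_{\setPtw}(\Tlinep) = \Tlinesig$, and applying the inverse homeomorphism yields the reverse implication. Independence of the particular slack $(w,\uw,\vw)$ in $W(t,\ut,\vt)$ follows from the symmetry remark, giving the ``any (hence all)'' formulation.

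I do not expect a genuine obstacle, since the heavy lifting is already contained in the cone homeomorphisms of \cref{subsec:e-mpec}. The one point deserving care — and the natural place for a gap to hide — is confirming that \emph{one and the same} map $\bar\psi_{\setPtw}$ simultaneously realizes both the tangent-cone and the linearized-cone identification; this is precisely what makes equality of the two cones transfer cleanly under the bijection, without any further sign bookkeeping beyond the index-set correspondence already fixed by $\setPt$ and $\setPw$. Once that is in hand, the equivalence is immediate.
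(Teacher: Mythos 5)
Your proposal is correct and follows essentially the same route as the paper: the paper proves \cref{th:branch-acq-e} by reducing to the cone identity and invoking the cone homeomorphisms $\bar\psi_{\setPtw}$ of \cref{subsec:e-mpec}, exactly as in the proof of \cref{th:branch-acq-i}. Your additional observation that both restrictions of $\bar\psi_{\setPtw}$ come from one and the same linear map, so that the equality $\Tep=\Tlinep$ transfers set-theoretically to $\Tesig=\Tlinesig$, is precisely the point the paper leaves implicit.
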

\ifCmp\else
\begin{proof}
 This follows as in the proof of \cref{th:branch-acq-i}.
\end{proof}
\fi

\begin{theorem}[GCQ for \eqref{eq:branch-e-anf} $\iff$ GCQ for  \eqref{eq:branch-e-mpec}]\label{th:branch-gcq-e}
  GCQ for \eqref{eq:branch-e-anf} at $(x,z(x))\Fesig$ is equivalent to GCQ for
  \ifnum\FmtChoice=1\else the corresponding \fi \eqref{eq:branch-e-mpec} at
  $(x,u,v)=(x,[z(x)]^+,[z(x)]^-)\in\Fep$.
\end{theorem}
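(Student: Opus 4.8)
The plan is to mirror the proof of \cref{th:branch-gcq-i}, since \eqref{eq:branch-e-anf} and \eqref{eq:branch-e-mpec} are exactly the special cases of \eqref{eq:branch-anf} and \eqref{eq:branch-mpec} produced by the slack substitution $x=(t,w)$, $(u,v)=((\ut,\uw),(\vt,\vw))$, and $\setP=\setPtw$; under this identification the cone homeomorphisms $\bar\psi_{\setPtw}$ play exactly the role of the maps $\psi_{\setP}$. Writing $y=(t,w,\ut,\vt,\uw,\vw)$ and its image $\bar y=(t,w,\zt,\zw)$, I first recall that the inclusions $\Tesig(\bar y)^*\supseteq\Tlinesig(\bar y)^*$ and $\Tep(y)^*\supseteq\Tlinep(y)^*$ always hold. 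Hence GCQ on each side amounts to the reverse inclusion, and it suffices to prove
\begin{equation*}
 \Tesig(\bar y)^*\subseteq\Tlinesig(\bar y)^* \iff \Tep(y)^*\subseteq\Tlinep(y)^*.
\end{equation*}

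For the implication ``$\Rightarrow$'' I would take $\omega=(\omega t,\omega w,\omega\ut,\omega\vt,\omega\uw,\omega\vw)\in\Tep(y)^*$ and build a dual vector $\~\omega=(\omega t,\omega w,\omega\zt,\omega\zw)$ on the lifted NLP side, passing the $t$- and $w$-components through unchanged and condensing the two complementarity blocks as in \cref{th:branch-gcq-i}: for the $(\ut,\vt)$-block the partition $\setUt_+,\setVt_+,\setPt,\bar\setPt$ determines whether $\omega\zt_i$ equals $+\omega\ut_i$ or $-\omega\vt_i$, and the analogous partition $\setUw_+,\setVw_+,\setPw,\bar\setPw$ does the same for $\omega\zw$. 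The decisive point is that on the branch cones the complementarity has been resolved, so for each index exactly one of $\delta\ut_i,\delta\vt_i$ (and of $\delta\uw_i,\delta\vw_i$) is forced to vanish; this makes the condensation well defined and yields the key identity $\~\omega^T\bar\psi_{\setPtw}(\delta)=\omega^T\delta$, valid for every $\delta$ obeying the complementarity sign pattern, hence in particular on both $\Tep(y)$ and $\Tlinep(y)$. Since $\bar\psi_{\setPtw}$ maps $\Tep(y)$ onto $\Tesig(\bar y)$, the identity gives $\~\omega\in\Tesig(\bar y)^*$, so $\~\omega\in\Tlinesig(\bar y)^*$ by assumption; evaluating the identity on $\Tlinep(y)=\bar\psi_{\setPtw}^{-1}(\Tlinesig(\bar y))$ then yields $\omega\in\Tlinep(y)^*$.

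The reverse implication ``$\Leftarrow$'' I would treat exactly as the corresponding step of \cref{th:gkq-gcq-i}: a functional on the NLP side is lifted to the MPCC side by splitting $\omega\zt,\omega\zw$ along $\delta\zt=\delta\ut-\delta\vt$ and $\delta\zw=\delta\uw-\delta\vw$ and inserting the matching $w$-component, so that the inner products again agree along $\bar\psi_{\setPtw}$. I expect the only delicate part to be the sign bookkeeping when the two complementarity blocks are condensed simultaneously in the presence of the coupling $\delta w=\delta\uw-\delta\vw$; but this merely duplicates the single-block computation of \cref{th:branch-gcq-i} and introduces nothing new, so the equivalence follows. In fact, because \eqref{eq:branch-e-anf} and \eqref{eq:branch-e-mpec} are genuine instances of \eqref{eq:branch-anf} and \eqref{eq:branch-mpec}, the entire claim can alternatively be obtained in one step by invoking \cref{th:branch-gcq-i} directly.
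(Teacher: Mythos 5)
Your proposal is correct and matches the paper's approach: the paper proves \cref{th:branch-gcq-e} precisely by observing that \eqref{eq:branch-e-anf} and \eqref{eq:branch-e-mpec} are special cases of \eqref{eq:branch-anf} and \eqref{eq:branch-mpec} and repeating the argument of \cref{th:branch-gcq-i} (condensing dual vectors through the restricted homeomorphisms, with the reverse direction as in \cref{th:gkq-gcq-i}). Your closing observation that the claim follows in one step by direct specialization is exactly the paper's stated justification; the detailed mirroring you give is just that argument written out.
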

\ifCmp\else
\begin{proof}
 This follows as in the proof of \cref{th:branch-gcq-i}.
\end{proof}
\fi

All the relations discussed in \crefrange{sec:anf-formulations}{sec:qualifications}
are illustrated in \cref{fig:relations}.
In the inner square (containing \eqref{eq:i-anf} and \eqref{eq:e-anf} as well as the counterpart MPCCs \eqref{eq:i-mpec} and \eqref{eq:e-mpec}) there are four single-headed arrows,
which indicate that only one direction has been proved
and we do not know if the converses hold as well.
Therefore we considered the branch problems given on the outer right and left in the figure.
Since ACQ respectively GCQ for all branch problems imply the corresponding
kink qualification or MPCC-constraint qualification,
there are further single-headed arrows pointing to the inner square.
Results that follow directly from other equivalences have arrows with the label (implied).

\ifnum\FmtChoice=1
 \amssidewaysfigure
\else
\sidewaysfigure
\fi
    \begin{center}{
        \begin{tikzpicture}[ampersand replacement=\&]
            \matrix (m) [matrix of math nodes,nodes={draw,anchor=center,text centered,align=center,text width=3.3cm,rounded corners,minimum width=2.0cm, minimum height=1.5cm},column sep=6.5pc,row sep=10em]
            {
            |(A)| {\begin{array}{c} \textup{Branch-Problems} \\ \textup{Reformulation} \\ \textup{\eqref{eq:branch-e-anf}} \end{array}}  \&
            |(B)| {\begin{array}{c} \textup{Reformulation} \\ \textup{\eqref{eq:e-anf}} \end{array}}  \&
            |(C)|  {\begin{array}{c} \textup{Reformulation} \\ \textup{\eqref{eq:e-mpec}} \end{array}} \&
            |(D)|  {\begin{array}{c} \textup{Branch-Problems} \\ \textup{Reformulation} \\ \textup{\eqref{eq:branch-e-mpec}} \end{array}}   \\
            |(E)| {\begin{array}{c} \textup{Branch-Problems} \\ \textup{\eqref{eq:branch-anf}} \end{array}}  \&
            |(F)| {\begin{array}{c}  \textup{Abs-normal NLP} \\ \textup{\eqref{eq:i-anf}} \end{array}} \&
            |(G)| {\begin{array}{c} \textup{Counterpart} \\ \textup{MPCC} \\ \textup{\eqref{eq:i-mpec}} \end{array}}  \&
            |(H)| {\begin{array}{c} \textup{Branch-Problems} \\ \textup{\eqref{eq:branch-mpec}} \end{array}}   \\
            };
            \begin{scope}[>={LaTeX[width=2mm,length=2mm]},->,line width=.5pt]
                \tikzstyle{every node}=[font=\footnotesize]

                \path[dashed] ([xshift=2em]A.north) edge[bend left=20,<->]  node[above] {\cref{th:branch-gcq-e}} ([xshift=-2em]D.north);
                \path[] ([xshift=-2em]A.north) edge[bend left=25,<->]  node[above] {\cref{th:branch-acq-e}} ([xshift=2em]D.north);

                \path[] ([yshift=1em]A.east) edge[->] node[below]{\cref{th:e-branch-acq_akq}}([yshift=1em]B.west);
                \path[dashed] ([yshift=-1em]A.east) edge[->] node[below] {\cref{th:e-branch-gcq_gkq}} ([yshift=-1em]B.west);

                \path[] ([yshift=1em]B.east) edge[<->] node[below]{\cref{th:acq-akq-e}}([yshift=1em]C.west);
                \path[dashed] ([yshift=-1em]B.east) edge[<-] node[below] {\cref{th:gcq-gkq-e}} ([yshift=-1em]C.west);

                \path[] ([yshift=1em]C.east) edge[<-] node[below]{\cref{th:e-branch-acq_mpec-acq}}([yshift=1em]D.west);
                \path[dashed] ([yshift=-1em]C.east) edge[<-] node[below] {\cref{th:e-branch-gcq_mpec-gcq}} ([yshift=-1em]D.west);

                \path[] ([xshift=-2em]A.south) edge[<->] node[above,rotate=-90]{\cref{th:branch-acq}}([xshift=-2em]E.north);
                \path[dashed] ([xshift=2em]A.south) edge[<->] node[above,rotate=-90]{\cref{th:branch-gcq}}([xshift=2em]E.north);

                \path[] ([xshift=-2em]B.south) edge[<->] node[above,rotate=-90]{\cref{th:akq}}([xshift=-2em]F.north);
                \path[dashed] ([xshift=2em]B.south) edge[->] node[above,rotate=-90]{\cref{th:gkq}}([xshift=2em]F.north);

                \path[dashed] ([xshift=-2em]C.south) edge[->] node[above,rotate=90]{\cref{th:gcq}}([xshift=-2em]G.north);
                \path[] ([xshift=2em]C.south) edge[<->] node[above,rotate=90]{(implied)}([xshift=2em]G.north);

                \path[dashed] ([xshift=-2em]D.south) edge[<->] node[above,rotate=90]{(implied)}([xshift=-2em]H.north);
                \path[] ([xshift=2em]D.south) edge[<->] node[above,rotate=90]{(implied)}([xshift=2em]H.north);

                \path[dashed] ([yshift=1em]E.east) edge[->] node[below]{\cref{th:branch-gcq_gkq}}([yshift=1em]F.west);
                \path[] ([yshift=-1em]E.east) edge[->] node[below] {\cref{th:branch-acq_akq}} ([yshift=-1em]F.west);

                \path[dashed] ([yshift=1em]F.east) edge[<-] node[above] {\cref{th:gkq-gcq-i}} ([yshift=1em]G.west);
                \path[] ([yshift=-1em]F.east) edge[<->]  node[above] {\cref{th:akq-acq-i}} ([yshift=-1em]G.west);

                \path[dashed] ([yshift=1em]G.east) edge[<-] node[below]{\cref{th:branch-gcq_mpec-gcq}}([yshift=1em]H.west);
                \path[] ([yshift=-1em]G.east) edge[<-] node[below] {\cref{th:branch-acq_mpec-acq}} ([yshift=-1em]H.west);

                \path[dashed] ([xshift=2em]E.south) edge[bend right=20,<->]  node[above] {\cref{th:branch-gcq-i}} ([xshift=-2em]H.south);
                \path[] ([xshift=-2em]E.south) edge[bend right=25,<->]  node[above] {\cref{th:branch-acq-i}} ([xshift=2em]H.south);
            \end{scope}
        \end{tikzpicture}}
    \end{center}
    \caption{Solid arrows: relations between AKQ and MPCC-ACQ;
      dashed arrows: relations between GKQ and MPCC-GCQ. Note that in \cref{th:gkq}, \cref{th:gcq}, \cref{th:gkq-gcq-i} and \cref{th:gcq-gkq-e} we have only proved one-sided implications and it is open whether the reverse implications hold.}
    \label{fig:relations}
\ifnum\FmtChoice=1
 \endamssidewaysfigure
\else
\endsidewaysfigure
\fi

\section{First Order Stationarity Concepts}
\label{sec:stationarity}

In this section, we introduce definitions of Mordukhovich stationarity
and Bouligand stationarity for abs-normal NLPs
and compare these definitions to M-stationarity and B-stationarity for MPCCs.
We give proofs based on the general formulation.

\subsection{Mordukhovich Stationarity}
In this paragraph we have a closer look at M-stationarity \cite{Outrata1999},
which is a necessary optimality condition for MPCCs under MPCC-ACQ \cite{FlegelKanzow2006}.

\begin{definition}[M-Stationarity for \eqref{eq:i-mpec}, see \cite{Outrata1999}]\label{def:m-stat}
  Consider a feasible point $(x^*,u^*,v^*)$ of \eqref{eq:i-mpec}
  with associated index sets $\setU_+$, $\setV_+$ and $\setD$.
  It is an \emph{M-stationary} point
  if there exist multipliers $\lambda = (\lame,\lami,\lamz)$ and
  $\mu=(\muu,\muv)$ such that the following conditions are satisfied:
  \begin{subequations}\label{eq:m-stat}
  \begin{align}
    \partial_{x,u,v} \Lc(x^*,u^*,v^*,\lambda,\mu) &= 0,\label{eq:m-stat-a}\\
    ((\muu)_i > 0,\ (\muv)_i > 0) \ \vee \ (\muu)_i(\muv)_i &= 0, \ i\in \setD\label{eq:m-stat-b}\\
    (\muu)_i &= 0, \ i\in \setU_+,\label{eq:m-stat-c}\\
    (\muv)_i &= 0, \ i\in \setV_+,\label{eq:m-stat-d}\\
    \lami &\ge 0,\label{eq:m-stat-e}\\
    \lami^T \ci(x^*,u^*,v^*) &= 0\label{eq:m-stat-f}.
  \end{align}
  \end{subequations}
  Herein, $\Lc$ is the MPCC-Lagrangian function
  \begin{align*}
    \Lc(x,u,v,\lambda,\mu)\define f(x)
    &+\lame^T\ce(x,u+v) - \lami^T\ci(x,u+v)\\
    &+\lamz^T[\cz(x,u+v) - (u-v)] - \muu^T \ut - \muv^T \vt.
  \end{align*}
\end{definition}

\ifCmp
Local minimizers of \eqref{eq:i-mpec}
are M-stationary points under MPCC-ACQ, as shown in \cite{FlegelKanzow2006,FlegelDiss}.
\begin{minipage}{0.74\textwidth}
The name M-stationarity was introduced by Scholtes in \cite{Scholtes_2001} and was motivated by the fact that
the sign restrictions on the multipliers in \eqref{eq:m-stat} in fact model the Mordukhovich normal cone.
The inset on the right illustrates the feasible set of multiplier values for a pair $((\mu_{{u}})_i,(\mu_{{v}})_i)$, $i\in\mathcal D$.
M-stationarity is a weaker stationarity concept than strong stationarity, but is at the same time the strongest
necessary optimality condition known to hold in absence of a strong constraint qualification like MPCC-MFCQ.
\end{minipage}
\hspace*{.5cm}\begin{minipage}{0.25\textwidth}
\begin{tikzpicture}
\draw[fill=black] (0,0)--(1,0)--(1,1)--(0,1)--(0,0);
\draw[<->] (1.5,0) node[right]{$(\mu_{{u}})_i$} --(0,0)--(0,1.5) node[above]{$(\mu_{{v}})_i$};
\draw[very thick] (-1,0)--(0,0)--(0,-1);
\end{tikzpicture}
\end{minipage}
\else
\begin{theorem}\label{th:mpec-local-min-m-stat}
 Under MPCC-ACQ all local minimizers of \eqref{eq:i-mpec} are M-stationary points.
\end{theorem}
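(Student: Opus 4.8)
Since M-stationarity is a property of the counterpart MPCC and not of the abs-normal reformulations that this paper studies, the plan is to regard the statement as the classical result of Flegel and Kanzow \cite{FlegelKanzow2006} (see also \cite{FlegelDiss}) and to follow their argument, reproducing only as much as keeps the section self-contained.

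First I would record the purely geometric first-order condition that holds at any local minimizer, with no constraint qualification needed. Since the objective depends on $x$ alone, a local minimizer $(x^*,u^*,v^*)$ of \eqref{eq:i-mpec} satisfies $\partial_x f(x^*)^T \delta x \ge 0$ for every $(\delta x,\delta u,\delta v)\in\Tmpec(x^*,u^*,v^*)$, because along any feasible sequence realising such a tangent direction the objective cannot decrease to first order. Equivalently, $(\partial_x f(x^*),0,0)\in\Tmpec(x^*,u^*,v^*)^*$.

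Next I would bring in MPCC-ACQ. By \cref{def:mpec-acq} it gives $\Tmpec=\Tlinmpec$, hence $(\partial_x f(x^*),0,0)\in\Tlinmpec(x^*,u^*,v^*)^*$. Using the decomposition \eqref{eq:decomp-cones-mpec}, $\Tlinmpec=\bigcup_{\setP}\Tlinp$, and dualising a union yields the intersection $\Tlinmpec^*=\bigcap_{\setP}\Tlinp^*$, so the extended gradient lies in $\Tlinp^*$ for \emph{every} branch $\setP\subseteq\setD$. As each $\Tlinp$ is the polyhedral linearized cone of the smooth branch \eqref{eq:branch-mpec}, Farkas' lemma turns membership $(\partial_x f(x^*),0,0)\in\Tlinp^*$ into the existence of branch multipliers $(\lame,\lami,\lamz)$ and $(\muu,\muv)$ satisfying the stationarity condition \eqref{eq:m-stat-a}, with $\lami\ge 0$ complementary to $\ci$ (conditions \eqref{eq:m-stat-e} and \eqref{eq:m-stat-f}), with $(\muu)_i=0$ on $\setU_+$ and $(\muv)_i=0$ on $\setV_+$ (conditions \eqref{eq:m-stat-c} and \eqref{eq:m-stat-d}), and with the branch sign pattern $(\muv)_i\ge 0$ on $\setP$ and $(\muu)_i\ge 0$ on $\bar\setP$ (the remaining degenerate components being free). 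Thus every condition of \cref{def:m-stat} except the M-sign condition \eqref{eq:m-stat-b} is already supplied by a single branch.

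The crux, and the step I expect to be the main obstacle, is to pass from this whole family of branch-wise multiplier sets to a single multiplier vector satisfying \eqref{eq:m-stat-b} on $\setD$. Choosing one fixed $\setP$ only delivers a coordinatewise sign on $\setD$, i.e.\ weak- or C-type information, so the branches must genuinely be combined. The essential fact is the dual-cone identity $\Tlinmpec^*=K^*+N$, where $K$ collects the equality, active-inequality, and switching rows and $N$ is the limiting (Mordukhovich) normal cone to the complementarity set $\defset{(u,v)}{0\le u\compl v\ge 0}$, which on each degenerate index has exactly the shape prescribed by \eqref{eq:m-stat-b}. It is precisely the non-convexity of the complementarity cone that invalidates the naive dual-of-intersection formula and forces the limiting normal cone; establishing this identity (equivalently, that $\bigcap_{\setP}\Tlinp^*$ coincides with $K^*+N$) is the technical heart of \cite{FlegelKanzow2006,FlegelDiss}. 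As it concerns generic MPCCs and is independent of the abs-normal structure, I would cite it rather than reprove it.
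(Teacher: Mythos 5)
Your proposal is correct and takes essentially the same route as the paper, which for this theorem simply points to the short direct proof in \cite{FlegelKanzow2006,FlegelDiss}: your sketch (local minimality gives membership of the gradient in $\Tmpec^*$, MPCC-ACQ transfers this to $\Tlinmpec^* = \bigcap_{\setP}\Tlinp^*$, Farkas on each polyhedral branch, and the limiting-normal-cone identity to recover the sign condition \eqref{eq:m-stat-b}) is a faithful outline of that cited argument, and you correctly defer to the same source for its combinatorial core. Note only that for the theorem one needs just the inclusion of $\Tlinmpec^*$ in the set of M-multiplier representations, not the full equality you describe.
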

\begin{proof}
 A short and direct proof is found in \cite{FlegelKanzow2006}.
\end{proof}
\fi

\begin{definition}[M-Stationarity for \eqref{eq:i-anf}]\label{def:m-stat-anf}
  Consider a feasible point $(x^*,z^*)$ of \eqref{eq:i-anf}.
  It is an \emph{M-stationary} point
  if there exist multipliers $\lambda = (\lame,\lami,\lamz)$
  such that the following conditions are satisfied:
  \begin{subequations}\label{eq:m-stat-anf}
  \begin{align}
    f'(x^*) +
    \lame^T \partial_1 \ce -
    \lami^T \partial_1 \ci +
    \lamz^T \partial_1 \cz &= 0,\label{m-stat-anf-a}\\
    [\lame^T \partial_2 \ce -
    \lami^T \partial_2 \ci +
    \lamz^T \partial_2 \cz]_i &= (\lamz)_i\sigma^*_i, \ i\notin\alpha(x^*),\label{m-stat-anf-b}\\
    (\mu_i^-)(\mu_i^+)=0 \ \vee \ [\lame^T \partial_2 \ce -
    \lami^T \partial_2 \ci +
    \lamz^T \partial_2 \cz]_i &> \abs{(\lamz)_i}, \ i\in\alpha(x^*),\label{m-stat-anf-c}\\
    \lami &\ge 0, \label{m-stat-anf-d}\\
    \lami^T \ci &= 0\label{m-stat-anf-e}.
  \end{align}
  \end{subequations}
   Here we use the notation
    \begin{align*}
     \mu^+_i&\define\left[\lame^T \partial_2 \ce - \lami^T \partial_2 \ci +  \lamz^T [\partial_2 \cz-I]\right]_i,\\
     \mu^-_i&\define\left[\lame^T \partial_2 \ce - \lami^T \partial_2 \ci +  \lamz^T [\partial_2 \cz+I]\right]_i,
    \end{align*}
  and the constraints and the partial derivatives are evaluated
  at $(x^*,\abs{z^*})$.
\end{definition}

\begin{theorem}[M-Stationarity for \eqref{eq:i-mpec} is M-Stationarity for \eqref{eq:i-anf}]\label{thm:m-stat-is-m-stat}
A feasible point $(x^*,z^*)$ of \eqref{eq:i-anf} is M-stationary if and only if
$(x^*,u^*,v^*)=(x^*,[z^*]^+,[z^*]^-)$ of \eqref{eq:i-mpec} is M-stationary.
\end{theorem}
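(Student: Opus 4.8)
The plan is to use the \emph{same} multiplier vector $\lambda=(\lame,\lami,\lamz)$ in both stationarity systems and to recover the MPCC complementarity multiplier $\mu=(\muu,\muv)$ directly from the partial derivatives of $\Lc$. Write $A_i := [\lame^T\partial_2\ce - \lami^T\partial_2\ci + \lamz^T\partial_2\cz]_i$, so that the abs-normal quantities are $\mu^+_i = A_i-(\lamz)_i$ and $\mu^-_i = A_i+(\lamz)_i$. First I would evaluate $\partial_x\Lc$, $\partial_u\Lc$, $\partial_v\Lc$ at $(x^*,u^*,v^*)$ with $u^*=[z^*]^+$, $v^*=[z^*]^-$ (hence $u^*+v^*=\abs{z^*}$). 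Since $-\muu^T u-\muv^T v$ does not depend on $x$, the equation $\partial_x\Lc=0$ is literally \eqref{m-stat-anf-a}. Using $\partial_u(u+v)=\partial_v(u+v)=I$ together with $\partial_u[-(u-v)]=-I$ and $\partial_v[-(u-v)]=+I$, the equations $\partial_u\Lc=0$ and $\partial_v\Lc=0$ reduce componentwise to $(\muu)_i = A_i-(\lamz)_i = \mu^+_i$ and $(\muv)_i = A_i+(\lamz)_i = \mu^-_i$. This identification is the core of the argument: it makes the MPCC Lagrangian stationarity equivalent to \eqref{m-stat-anf-a} once $\lambda$ is shared.

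Next I would match the index sets. From $u^*=[z^*]^+$ and $v^*=[z^*]^-$ one reads off $\setU_+=\{i:\sigma^*_i=+1\}$, $\setV_+=\{i:\sigma^*_i=-1\}$, and $\setD=\alpha(x^*)$. For an inactive index $i\notin\alpha(x^*)$, conditions \eqref{eq:m-stat-c} and \eqref{eq:m-stat-d} require $(\muu)_i=0$ when $\sigma^*_i=+1$ and $(\muv)_i=0$ when $\sigma^*_i=-1$; through the identification these read $\mu^+_i=0$, i.e. $A_i=(\lamz)_i$, respectively $\mu^-_i=0$, i.e. $A_i=-(\lamz)_i$. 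Both cases amount to $A_i=(\lamz)_i\sigma^*_i$, which is exactly \eqref{m-stat-anf-b}. The conditions \eqref{eq:m-stat-e} and \eqref{eq:m-stat-f} on $\lami$ coincide verbatim with \eqref{m-stat-anf-d} and \eqref{m-stat-anf-e}.

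The step I expect to require the most care is the degenerate block $i\in\setD=\alpha(x^*)$. Here I would observe that the first alternative $(\muu)_i>0 \wedge (\muv)_i>0$ of \eqref{eq:m-stat-b} translates, via $\mu^+_i=A_i-(\lamz)_i$ and $\mu^-_i=A_i+(\lamz)_i$, to $A_i>(\lamz)_i$ and $A_i>-(\lamz)_i$ holding simultaneously, i.e. to $A_i>\abs{(\lamz)_i}$, while the second alternative $(\muu)_i(\muv)_i=0$ is simply $\mu^+_i\mu^-_i=0$. Combining the two, \eqref{eq:m-stat-b} is equivalent to \eqref{m-stat-anf-c}. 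The subtlety is purely in handling the disjunctions and in verifying that the strict inequalities line up, rather than in any deep computation.

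With these correspondences, both implications follow. For ``$\Rightarrow$'' I would take the given NLP multipliers $\lambda$, \emph{define} $(\muu)_i:=\mu^+_i$ and $(\muv)_i:=\mu^-_i$, and verify the six conditions of \cref{def:m-stat} block by block. For ``$\Leftarrow$'' I would reuse the given $\lambda$ and note that $\partial_u\Lc=\partial_v\Lc=0$ forces $(\muu)_i=\mu^+_i$ and $(\muv)_i=\mu^-_i$, from which the abs-normal conditions are read off. I would finally remark that the components $(\muv)_i$ for $i\in\setU_+$ and $(\muu)_i$ for $i\in\setV_+$ carry no sign restriction in \cref{def:m-stat}, so fixing them to $\mu^-_i$, $\mu^+_i$ is consistent and closes the equivalence.
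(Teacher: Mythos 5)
Your proposal is correct and takes essentially the same route as the paper: the whole argument rests on identifying $(\muu)_i$ with $\mu^+_i$ and $(\muv)_i$ with $\mu^-_i$ and then matching the remaining conditions block by block over $\setU_+$, $\setV_+$ and $\setD$. The only difference is that the paper imports this multiplier identification and the equivalence of the strict-inequality alternatives from Theorem~33 of the earlier companion paper \cite{Hegerhorst_et_al:2019:MPEC2}, whereas you derive them directly from $\partial_u\Lc=\partial_v\Lc=0$, making the proof self-contained; the substance is identical.
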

\begin{proof}
For indices that satisfy the first condition in \eqref{eq:m-stat-b},
the equivalence with the second condition in \eqref{m-stat-anf-c}
was shown in \cite[Theorem 33]{Hegerhorst_et_al:2019:MPEC2}.
Thus, we just need to consider the alternative conditions.
For \eqref{eq:i-mpec} we have the relations
  \begin{align*}
   \left[\lame^T \partial_2 \ce - \lami^T \partial_2 \ci + \lamz^T [\partial_2 \cz -I]\right]_i &= (\muu)_i,
   \ i\in \setD, \\
   \left[\lame^T \partial_2 \ce - \lami^T \partial_2 \ci + \lamz^T [\partial_2 \cz +I]\right]_i &= (\muv)_i,
   \ i\in \setD,
  \end{align*}
which was also shown in \cite[Theorem 33]{Hegerhorst_et_al:2019:MPEC2}.
These are exactly the definitions of $\mu_i^+$ and $\mu_i^-$ in the definition of
M-Stationarity for \eqref{eq:i-anf}.
\end{proof}

Consequently, we may now rephrase the result by \cite{FlegelKanzow2006,FlegelDiss} in the language of abs-normal forms.

\begin{theorem}[Minimizers and M-Stationarity for \eqref{eq:i-anf}]\label{th:local-min-m-stat}
Assume that $(x^*,z^*)$ is a local minimizer of \eqref{eq:i-anf} and that AKQ holds at $x^*$.
Then, $(x^*,z^*)$ is M-stationary for \eqref{eq:i-anf}.
\end{theorem}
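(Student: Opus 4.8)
The plan is to lift the problem to the counterpart MPCC \eqref{eq:i-mpec}, invoke the known M-stationarity theorem for MPCCs, and transfer the conclusion back via the equivalences already established in this section. Concretely, I would set $(x^*,u^*,v^*) = (x^*,[z^*]^+,[z^*]^-) = \phi^{-1}(x^*,z^*) \in \Fmpec$ and show that this point is M-stationary for \eqref{eq:i-mpec}, which by \cref{thm:m-stat-is-m-stat} is exactly the desired conclusion for $(x^*,z^*)$.

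The key preliminary step is to verify that $(x^*,u^*,v^*)$ is a \emph{local minimizer} of \eqref{eq:i-mpec}. Here I would use that $\phi\colon\Fmpec\to\Fabs$, $\phi(x,u,v)=(x,u-v)$, is a homeomorphism whose first component is the identity in $x$, while the objective $f$ depends on $x$ only. Hence $\phi$ preserves objective values, and, being a homeomorphism, it maps a sufficiently small neighborhood (in $\Fmpec$) of $(x^*,u^*,v^*)$ onto a neighborhood (in $\Fabs$) of $(x^*,z^*)$. Since $(x^*,z^*)$ minimizes $f$ locally over $\Fabs$, no feasible point near $(x^*,u^*,v^*)$ can achieve a smaller objective value, so $(x^*,u^*,v^*)$ is a local minimizer of \eqref{eq:i-mpec}.

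With this in hand, the remaining steps are direct. Because AKQ holds at $x^*$ for \eqref{eq:i-anf}, \cref{th:akq-acq-i} gives that MPCC-ACQ holds at $(x^*,u^*,v^*)$ for \eqref{eq:i-mpec}. Applying the M-stationarity result of \cite{FlegelKanzow2006,FlegelDiss}, namely that every local minimizer of an MPCC satisfying MPCC-ACQ is M-stationary, I conclude that $(x^*,u^*,v^*)$ is an M-stationary point of \eqref{eq:i-mpec}. Finally, \cref{thm:m-stat-is-m-stat} transfers this back: M-stationarity of $(x^*,u^*,v^*)=(x^*,[z^*]^+,[z^*]^-)$ for \eqref{eq:i-mpec} is equivalent to M-stationarity of $(x^*,z^*)$ for \eqref{eq:i-anf}, which proves the claim.

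I expect the only genuine subtlety to lie in the first step, i.e. the careful argument that the homeomorphism $\phi$ preserves local optimality; this hinges on $f$ being independent of the switching variables and on $\phi$ fixing the $x$-coordinate. Everything else is a chaining of the equivalences \cref{th:akq-acq-i} and \cref{thm:m-stat-is-m-stat} with the established MPCC theory and requires no additional computation.
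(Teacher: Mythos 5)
Your proposal is correct and follows exactly the paper's own argument: pass to $(x^*,[z^*]^+,[z^*]^-)$, note it is a local minimizer of \eqref{eq:i-mpec} via the homeomorphism $\phi$, use \cref{th:akq-acq-i} to get MPCC-ACQ, invoke the M-stationarity result of \cite{FlegelKanzow2006,FlegelDiss}, and transfer back with \cref{thm:m-stat-is-m-stat}. Your added care in justifying that $\phi$ preserves local optimality (since $f$ depends only on $x$ and $\phi$ fixes the $x$-coordinate) is a detail the paper merely asserts, but it is the same proof.
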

\begin{proof}
  First, note that $(x^*,z^*)$ is a local minimizer of \eqref{eq:i-anf} if and only if
  $(x^*,u^*,v^*)=(x^*,[z^*]^+,[z^*]^-)$ is a local minimizer of \eqref{eq:i-mpec}.
  Then, the point $(x^*,u^*,v^*)$ is a local minimizer of the counterpart MPCC,
  and MPCC-ACQ holds by \cref{th:akq-acq-i}.
  \ifCmp Thus, \else Now, \cref{th:mpec-local-min-m-stat} implies that \fi
  $(x^*,u^*,v^*)$ is M-stationary for \eqref{eq:i-mpec}
  and \cref{thm:m-stat-is-m-stat} implies
  that $(x^*,z^*)$ is M-stationary for \eqref{eq:i-anf}.
\end{proof}

\subsection{MPCC-linearized Bouligand Stationarity}
Finally, we introduce MPCC-linearized Bouligand stationarity, which is defined via smooth subproblems.

\begin{definition}[MPCC-linearized B-Stationarity for \eqref{eq:i-mpec}, see \cite{Scheel_Scholtes_2000}]\label{def:b-stat-mpec}
Consider a feasible point $(x^*,u^*,v^*)$ of \eqref{eq:i-mpec} with associated index sets $\setU_+$, $\setV_+$ and $\setD$.
It is a \emph{B-stationary} point if it is a stationary point of all
branch problems \eqref{eq:branch-mpec} for $\setPt=\setP \subseteq \setD$.
Here, $\bar\setP$ denotes the complement of $\setP$ in $\setD(x^*)$.
\end{definition}

Note that there exist different names for the variant of B-stationarity just introduced. It is simply called \emph{B-stationarity} in \cite{Scheel_Scholtes_2000},
but we prefer here the name \emph{MPCC-linearized B-stationarity} suggested in \cite{FlegelDiss} to prevent confusion with the definition of B-stationarity in the smooth case. The concept of B-stationarity is the most intuitive among stationarity concepts in simply requiring that, no matter how degenerate pairs of complementarities are resolved, no first order descent direction may be revealed. Moreover, it may be brought into agreement with the concept of local minimizers already under very weak assumptions regarding constraint qualifications. The downside however is that verifying B-stationarity inherently requires exponential runtime effort, as the number of branch problems is exponential in the number of degenerate pairs in $\setD$.

\begin{theorem}\label{th:local-min-b-stat_mpec}
 If GCQ holds for all \eqref{eq:branch-mpec}, then all local minimizers of \eqref{eq:i-mpec} are MPCC-linearized B-stationary points.
\end{theorem}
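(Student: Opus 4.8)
The plan is to reduce the claim to the classical Guignard theorem for smooth NLPs applied to each branch problem separately, exploiting the feasible-set inclusion $\Fp \subseteq \Fmpec$. First I would record that a local minimizer $(x^*,u^*,v^*)$ of \eqref{eq:i-mpec} minimizes $f$ over $\Fmpec \cap \setN$ for some neighborhood $\setN$ of $(x^*,u^*,v^*)$. Fix any $\setP \subseteq \setD$ and pass to the associated branch problem \eqref{eq:branch-mpec}. Since $\Fp \subseteq \Fmpec$ and $(x^*,u^*,v^*) \in \Fp$, the same point minimizes $f$ over $\Fp \cap \setN$; hence $(x^*,u^*,v^*)$ is a local minimizer of the \emph{smooth} NLP \eqref{eq:branch-mpec}.

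Next I would invoke the primal first-order necessary condition at this local minimizer: no tangent direction is a descent direction, i.e. $\nabla f(x^*,u^*,v^*)^T \delta \ge 0$ for all $\delta \in \Tp(x^*,u^*,v^*)$, which says $\nabla f(x^*,u^*,v^*) \in \Tp(x^*,u^*,v^*)^*$. Applying GCQ for \eqref{eq:branch-mpec}, namely $\Tp(x^*,u^*,v^*)^* = \Tlinp(x^*,u^*,v^*)^*$, yields $\nabla f(x^*,u^*,v^*) \in \Tlinp(x^*,u^*,v^*)^*$. Because $\Tlinp$ is polyhedral, its polar is finitely generated by the gradients of the active constraints of \eqref{eq:branch-mpec}; by Farkas' lemma the membership $\nabla f \in \Tlinp^*$ is precisely the assertion that KKT multipliers exist, i.e. that $(x^*,u^*,v^*)$ is a stationary (KKT) point of the branch problem \eqref{eq:branch-mpec}.

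Finally, since this holds for every $\setP \subseteq \setD$, the point $(x^*,u^*,v^*)$ is a stationary point of all branch problems, which is exactly MPCC-linearized B-stationarity in the sense of \cref{def:b-stat-mpec}.

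I do not anticipate a genuine obstacle, as the argument is the smooth Guignard theorem used branchwise. The one step deserving care is the passage from $\nabla f \in \Tlinp^*$ to the existence of multipliers, which rests on the polyhedrality of $\Tlinp$ (Farkas' lemma); this is exactly why GCQ—and not a stronger qualification—is what makes local minimizers of a smooth NLP KKT points, and hence why GCQ for all branch problems suffices to force B-stationarity here.
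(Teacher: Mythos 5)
Your proposal is correct and is precisely the argument the paper compresses into ``this follows directly by KKT theory for smooth optimization problems'': a local minimizer of \eqref{eq:i-mpec} is a local minimizer of every smooth branch problem \eqref{eq:branch-mpec} since $\Fp\subseteq\Fmpec$ contains the point, and GCQ for each branch problem then yields stationarity there via the standard Guignard/Farkas argument. No substantive difference from the paper's intended route.
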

\begin{proof}
 This follows directly by KKT theory for smooth optimization problems.
\end{proof}

\begin{definition}[Abs-Normal-Linearized B-Stationarity for \eqref{eq:i-anf}]\label{def:b-stat-anf}
Consider a feasible point $(x^*,z^*)$ of \eqref{eq:i-anf}.
It is an \emph{abs-normal-linearized B-stationary} point if it is a stationary point of the
branch problems \eqref{eq:branch-anf} for $\Sigt=\diag(\sigma)$ with $\sigma \succeq \sigma(x)$.
\end{definition}

\begin{theorem}[MPCC-linearized B-stationarity for \eqref{eq:i-mpec} is abs-normal-linearized B-stationarity for \eqref{eq:i-anf}]\label{th:b-stat-is-b-stat}
A feasible point $(x^*,z^*)$ of \eqref{eq:i-anf} is abs-normal-linearized B-stationary if and only if
$(x^*,u^*,v^*)=(x^*,[z^*]^+,[z^*]^-)$ of \eqref{eq:i-mpec} is MPCC-linearized B-stationary.
\end{theorem}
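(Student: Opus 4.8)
The plan is to reduce the claim to the per-branch cone homeomorphism $\psi_{\setP}$ of \cref{le:hom-T-branch-i}, using that the objective $f$ depends on $x$ alone. First I would fix the reading of ``stationary point'' of a branch problem: since all branches \eqref{eq:branch-anf} and \eqref{eq:branch-mpec} are smooth NLPs, a feasible point is stationary exactly when its KKT conditions hold, equivalently when the objective gradient lies in the dual of the branch's linearized cone. As $f = f(x)$, this gradient is $(f'(x^*),0)$ in $(x,z)$-space and $(f'(x^*),0,0)$ in $(x,u,v)$-space. Hence stationarity of NLP($\Sigt$) at $(x^*,z^*)$ reads $f'(x^*)\,\delta x \ge 0$ for all $(\delta x,\delta z) \in \Tlinsig(x^*,z^*)$, while stationarity of the matched NLP($\setP$) at $(x^*,u^*,v^*)$ reads $f'(x^*)\,\delta x \ge 0$ for all $(\delta x,\delta u,\delta v) \in \Tlinp(x^*,u^*,v^*)$.

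Next I would line up the two families of branch problems. Since $(u^*,v^*) = ([z^*]^+,[z^*]^-)$ we have $\setD = \alpha(x^*)$, and, as recorded before \cref{le:hom-T-branch-i}, each branch signature admissible in \cref{def:b-stat-anf} corresponds bijectively to an index set $\setP = \defset{i \in \alpha(x^*)}{\sigt_i = -1} \subseteq \setD$, and conversely. Thus the signatures ranged over in \cref{def:b-stat-anf} and the sets $\setP \subseteq \setD$ ranged over in \cref{def:b-stat-mpec} are in one-to-one correspondence, and it suffices to show, for each matched pair, that $(x^*,z^*)$ is stationary for NLP($\Sigt$) if and only if $(x^*,u^*,v^*)$ is stationary for NLP($\setP$).

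The heart of the argument is that the linearized-cone homeomorphism $\psi_{\setP}\colon \Tlinp(x^*,u^*,v^*) \to \Tlinsig(x^*,z^*)$ of \cref{le:hom-T-branch-i} acts by $\psi_{\setP}(\delta x,\delta u,\delta v) = (\delta x, \delta u - \delta v)$, leaving the $\delta x$-component untouched. Because the objective gradient has vanishing $u$-, $v$- and $z$-components, its pairing with a tangent direction sees only $\delta x$, so $f'(x^*)\,\delta x$ takes the same value on any $d \in \Tlinp(x^*,u^*,v^*)$ and on $\psi_{\setP}(d) \in \Tlinsig(x^*,z^*)$. As $\psi_{\setP}$ is a bijection between these cones, $f'(x^*)\,\delta x \ge 0$ holds on all of $\Tlinp(x^*,u^*,v^*)$ if and only if it holds on all of $\Tlinsig(x^*,z^*)$, which is precisely the per-branch stationarity equivalence; running it over all matched pairs proves the theorem. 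The only real obstacle is bookkeeping: confirming $\setD = \alpha(x^*)$, matching the $\Sigt \leftrightarrow \setP$ bijection to the branch families of the two definitions, and reading ``stationary point'' through the linearized cones so that \cref{le:hom-T-branch-i} applies verbatim; once the $\delta x$-preservation of $\psi_{\setP}$ is noted, no genuine computation remains.
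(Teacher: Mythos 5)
Your proposal is correct and follows essentially the same route as the paper: both read per-branch stationarity as $f'(x^*)^T\delta x \ge 0$ over the branch's linearized cone and then transfer this condition through the cone homeomorphisms of \cref{le:hom-T-branch-i}. You merely make explicit two points the paper leaves implicit, namely the bijection between the signatures $\Sigt$ and the sets $\setP\subseteq\setD$ and the fact that $\psi_{\setP}$ leaves the $\delta x$-component unchanged so the objective pairing is preserved.
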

\begin{proof}
Every branch problem \eqref{eq:branch-anf} is smooth and thus stationarity is equivalent to the condition $f'(x^*)^Td\ge 0$ for all $d\in\Tlinsig(x^*,z^*)$.
Analogously, stationarity for every branch problem \eqref{eq:branch-mpec} is equivalent to the condition $f'(x^*)^Td\ge 0$ for all $d\in\Tlinp(x^*,[z^*]^+,[z^*]^-)$.
Then, the equivalence follows as both branch problems are homeomorphic
and both linearization cones are homeomorphic by \cref{le:hom-T-branch-i}.
\end{proof}

\begin{theorem}[Minimizers and abs-normal-linearized B-Stationarity for \eqref{eq:i-anf}]\label{th:local-min-b-stat}
Assume that $(x^*,z^*)$ is a local minimizer of \eqref{eq:i-anf} and that GCQ holds at $(x^*,z^*)$ for all \eqref{eq:branch-anf}.
Then, $(x^*,z^*)$ is abs-normal-linearized B-stationary for \eqref{eq:i-anf}.
\end{theorem}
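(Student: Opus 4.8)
The plan is to transfer the statement to the counterpart MPCC \eqref{eq:i-mpec} and invoke the MPCC result \cref{th:local-min-b-stat_mpec}, in complete analogy with the proof of \cref{th:local-min-m-stat} for M-stationarity. First I would use the homeomorphism $\phi$ between $\Fmpec$ and $\Fabs$: since $\phi$ and $\Inv\phi$ are continuous and the objective $f(x)$ does not depend on the switching variables, $(x^*,z^*)$ is a local minimizer of \eqref{eq:i-anf} if and only if $(x^*,u^*,v^*)=(x^*,[z^*]^+,[z^*]^-)$ is a local minimizer of \eqref{eq:i-mpec}.

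Next I would translate the constraint qualification hypothesis. By \cref{th:branch-gcq-i}, GCQ for a branch problem \eqref{eq:branch-anf} attached to a definite signature $\sigma \succeq \sigma(x^*)$ is equivalent to GCQ for the corresponding branch problem \eqref{eq:branch-mpec}, where the branch index set $\setP \subseteq \setD$ is the one associated with $\sigma$ as in \cref{le:hom-T-branch-i}. Since this assignment is a bijection between the definite signatures refining $\sigma(x^*)$ and the subsets $\setP \subseteq \setD$, the assumption that GCQ holds for \emph{all} \eqref{eq:branch-anf} is equivalent to GCQ holding for \emph{all} \eqref{eq:branch-mpec} at $(x^*,u^*,v^*)$.

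With both hypotheses transported, I would apply \cref{th:local-min-b-stat_mpec}: as $(x^*,u^*,v^*)$ is a local minimizer of \eqref{eq:i-mpec} and GCQ holds for all of its branch problems, the point is MPCC-linearized B-stationary for \eqref{eq:i-mpec}. Finally, \cref{th:b-stat-is-b-stat} yields that MPCC-linearized B-stationarity for \eqref{eq:i-mpec} at $(x^*,u^*,v^*)$ is equivalent to abs-normal-linearized B-stationarity for \eqref{eq:i-anf} at $(x^*,z^*)$, which gives the claim.

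I do not expect a genuine analytical obstacle, since all the heavy lifting has already been done by the previously established homeomorphisms of the feasible sets and of the (linearized) tangent cones. The one point deserving care is the bookkeeping of the branch-problem correspondence: one must ensure that ``for all branch problems'' on the abs-normal side really matches ``for all branch problems'' on the MPCC side under the indexing $\sigma \leftrightarrow \setP$, so that the global hypothesis required by \cref{th:local-min-b-stat_mpec} is genuinely available and not merely GCQ for a single branch.
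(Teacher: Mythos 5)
Your proposal is correct and follows exactly the paper's own route: transfer local minimality via the homeomorphism $\phi$, transfer the GCQ hypothesis for all branch problems via \cref{th:branch-gcq-i}, apply \cref{th:local-min-b-stat_mpec}, and return via \cref{th:b-stat-is-b-stat}. Your extra remark on the bijection between definite signatures $\sigma \succeq \sigma(x^*)$ and subsets $\setP \subseteq \setD$ is a useful explicit check of a correspondence the paper uses implicitly, but it does not change the argument.
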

\begin{proof}
  The point $(x^*,z^*)$ is a local minimizer of \eqref{eq:i-anf} if and only if
  $(x^*,u^*,v^*)=(x^*,[z^*]^+,[z^*]^-)$ is a local minimizer of \eqref{eq:i-mpec}.
  Moreover, GCQ for all \eqref{eq:branch-anf} and GCQ for all \eqref{eq:branch-mpec} are equivalent by \cref{th:branch-gcq-i}.
  Thus, $(x^*,u^*,v^*)$ is a local minimizer of the counterpart MPCC and GCQ holds for all \eqref{eq:branch-mpec}.
  Then, it is MPCC-linearized B-stationary by \cref{th:local-min-b-stat_mpec} and finally
  $(x^*,z^*)$ is abs-normal-linearized B-stationary by \cref{th:b-stat-is-b-stat}.
\end{proof}

\begin{remark}
 In \cite{Griewank_Walther_2019}, Griewank and Walther have presented
 a stationarity concept that holds without any kink qualification
 for minimizers of the \emph{unconstrained} abs-normal NLP
 \begin{equation}
   \label{eq:anf-0}
   \min_{x} f(x), \quad f\in \Cabs(\Domx,\R).
 \end{equation}
 Indeed, this concept is precisely abs-normal-linearized Bouligand stationarity:
 it requires the conditions of \cref{def:b-stat-anf}
 specialized to \eqref{eq:anf-0}.
 Now, the question arises why no regularity assumption is needed.
 The answer is that the abs-normal form provides
 a certain degree of built-in regularity:
 we have shown in \cite{Hegerhorst_et_al:2019:MPEC1}
 that MPCC-ACQ is always satisfied for the counterpart MPCC of \eqref{eq:anf-0} (and thus every local minimizer is an M-stationary point).
 Analogously one can show that ACQ for all branch problems \eqref{eq:branch-mpec} is always satisfied for \eqref{eq:anf-0}.
 Now, ACQ for all branch problems \eqref{eq:branch-mpec} is equivalent to ACQ for all branch problems \eqref{eq:branch-anf}
 by \cref{th:branch-acq-i}, which in turn implies GCQ for all branch problems \eqref{eq:branch-anf}.
 Thus, GCQ for all branch problems \eqref{eq:branch-anf} is always satisfied for \eqref{eq:anf-0} and \cref{th:local-min-b-stat} holds.
\end{remark}

\section{Conclusions}
\label{sec:conclusions}

We have shown that general abs-normal NLPs
are essentially the same problem class as MPCCs.
The two problem classes permit the definition of corresponding constraint qualifications,
and optimality conditions of first order under weak constraint qualifications.
We have also shown that the slack reformulation
from \cite{Hegerhorst_Steinbach:2019}
preserves constraint qualifications of Abadie type, whereas for Guginard type we could only prove some implications.
Here, one subtle drawback is the non-uniqueness of slack variables.
Thus, we have introduced branch formulations of general abs-normal NLPs and counterpart MPCCs.
Then, constraint qualifications of Abadie and Guignard type are preserved.

\ifcase\FmtChoice
\bibliographystyle{tfs}
\or
\bibliographystyle{tfs}
\or
\bibliographystyle{jnsao}
\fi
\bibliography{abs_normal_nlp}

\end{document}